\newcommand{\Real}{\mathbb{R}}
\newcommand{\Natural}{\mathbb{N}}
\newcommand{\Grid}{\mathcal{G}}
\DeclareMathOperator{\diam}{\mathrm{diam}}
\DeclareMathOperator{\prob}{\Pr}
\DeclareMathOperator{\supp}{\mathrm{supp}}
\DeclareMathOperator{\diag}{\mathrm{diag}}
\theoremstyle{plain}
\newtheorem{thmm}{Theorem}[section]
\newtheorem{definition}{Definition}[section]
\newtheorem{corollary}[thmm]{Corollary}
\newtheorem{lem}[thmm]{Lemma}
\newtheorem{remark}{Remark}[section]
\numberwithin{equation}{section}
\providecommand{\keywords}[1]
{
  \small	
  \textbf{\textit{Keywords---}} #1
}
\begin{document}

\title{The discrete charm of iterated function systems\\ 
 \large A computer scientist's perspective on approximation\\ of IFS invariant sets and measures}

\author{Tomasz Martyn}

\affil{Warsaw University of Technology, Institute of Computer Science, \\ul. Nowowiejska 15/19, 00-665, Warsaw,
Poland\\E-mail: tomasz.martyn@pw.edu.pl}

\maketitle


\begin{abstract}
 We study invariant sets and measures generated by iterated function systems defined on countable discrete spaces that are uniform grids of a finite dimension. The discrete spaces of this type can be considered as models of spaces in which actual numerical computation takes place. In this context, we investigate the possibility of the application of the random iteration algorithm to approximate these discrete IFS invariant sets and measures. The problems concerning a discretization of hyperbolic IFSs are considered as special cases of this more general setting.
\end{abstract}

\keywords {IFS, Discrete Space, Markov Chain, Approximation, Invariant Set, Invariant Measure.}

\section{The problem}\label{sect_problem}
Let $\{\Real^n; w_1, \dots, w_m \}$, $m\in \mathbb{N}$, be a \emph{hyperbolic iterated function system} (IFS) on a metric space $(\Real^n, d)$, where $d$ is a metric induced by a norm on $\mathbb{R}^n$, and the mappings $w_i$ are contractions on $(\Real^n, d)$. One can show that $\mathcal{H}(\Real^n)$, the family of all compact and nonempty subsets of $\Real^n$, when endowed with the Hausdorff metric $h$ (induced by $d$), forms a complete metric space $(\mathcal{H}(\Real^n), h)$. Moreover, if a contraction $w$ on $(\Real^n, d)$ is regarded as a set mapping $w : \mathcal{H}(\Real^n) \rightarrow \mathcal{H}(\Real^n)$, then $w$ is a contraction operator on $(\mathcal{H}(\Real^n), h)$ with the contractivity factor not greater than that of $w$ acting on $(\Real^n, d)$. This observation forms a basis for constructing the \emph{Hutchison operator} $W : \mathcal{H}(\Real^n) \rightarrow \mathcal{H}(\Real^n)$ defined by $W(E):= \bigcup_{i=1}^m w_i(E)$. The Hutchinson operator a contraction on $(\mathcal{H}(\Real^n), h)$ with the contractivity factor not exceeding the maximum of the contractivity factors of $w_i$. Finally, putting the contractivity of $W$ and the completeness of $(\mathcal{H}(\Real^n), h)$ together, by Banach's fixed-point theorem we get that $W$ possesses exactly one fixed point $A_{\infty} = W(A_{\infty})$, and moreover 
\begin{equation}\label{HutConv}
\lim_{k\rightarrow \infty} W^{\circ k}(B) = A_{\infty}   
\end{equation}
regardless of $B \in \mathcal{H}(\Real^n)$. The fixed point $A_{\infty}$ is called the \emph{attractor} of the IFS, and being an element of $\mathcal{H}(\Real^n)$ it is a nonempty and compact subset of $(\Real^n, d)$. 

An analogous argument, also based on Banach's fixed-theorem, is utilized to prove the existence and the uniqueness of the \emph{invariant measure} of the hyperbolic \emph{IFS with probabilities}, $\{K; w_1, \dots, w_m; p_1,\dots, p_m \}$, where $\sum_{i=1}^n p_i = 1$, $p_i > 0$, and $K$ is a compact subset of $(\Real^n, d)$. This time the role of the complete space is taken on by the compact (and thus complete) metric space $(\mathcal{P}(K), d_{w^*})$, where $\mathcal{P}(K)$ is the family of Borel probability measures on $K$, and $d_{w^*}$ is the Monge-Katorovich metric. The role of the contraction operator acting on $(\mathcal{P}(K), d_{w^*})$ is played by the Markov operator $T^*: \mathcal{P}(K) \rightarrow \mathcal{P}(K)$ defined by $T^*(\mu) := \sum_{i = 1}^N p_i \mu\circ w_i^{-1}$. In this setting, by the Banach fixed point theorem
\begin{equation}
    \lim_{k\rightarrow \infty} (T^*)^{\circ k}(\mu) = \pi_{\infty}
\end{equation}
no matter what $\mu \in \mathcal{P}(K)$, that is, the IFS invariant measure $\pi_{\infty} \in \mathcal{P}(K)$ is the unique attractive fixed point of the Markov operator. Moreover, one can show that $\pi_\infty$ is supported by the IFS attractor. 

All these are well-known facts from the theory of hyperbolic iterated function systems, and the appropriate proofs are long-established and can be found in quite a number of publications on IFS, to mention only such classical texts as \cite{Hutch81,Barn93}. And, as we have just pointed out, in the case of hyperbolic IFSs all these facts are founded on the contractivity of the IFS mappings along with Banach's fixed point theorem that utilizes, as one of its premises, the derived contractivity of the Hutchison and Markov operators. 

In actual implementations, however, the space upon which IFS mappings operate is always merely a discrete model of $\mathbb{R}^n$. From the point of view of computation the most basic model of the real numbers is a countable discrete space founded on floating-point arithmetic. Spaces of this kind are the natural setting within which implementation of the most popular algorithm for approximating IFS attractors and measures, the \emph{random iteration algorithm}\cite{Barn93}, generates its sequences of points. 
The problem is that contraction mappings on $\mathbb{R}^n$ can lose their contraction properties when forced to act on a countable approximation of the space, even in such simple cases as contractive similarity transformations\cite{Peru93}. In addition, as we shall see soon, numerical evidence shows that at least for affine mappings this phenomenon is not something exceptional and actually occurs more often than rarely. What is more, and might be even more surprising, this decline of contractivity caused by discretization is independent of precision, in the sense that no matter what a discretization granularity, the probability that a discrete version of an affine contraction is not a contraction remains unchanged. Therefore, we cannot get rid of this phenomenon by increasing the density of discretization. Since for an IFS to be hyperbolic all its component mappings have to be contractions, the probability that a discrete version of a hyperbolic affine IFS retains hyperbolicity decreases exponentially with the number of IFS mappings. As a consequence, actual implementations of an affine IFS are hardly ever hyperbolic even if the original IFS is so. Therefore, at least from the perspective of the standard hyperbolic IFS theoretical foundations, we cannot count on the discrete counterparts of the Hutchinson and Markov operators to be contractions in the discrete space, and thus, in such a situation, Banach's fixed-point theorem is not applicable, at least directly. In light of these facts, we can go even further and question the relevancy of outcomes supplied by the implementations of the algorithms built around the operators, asking how the outcomes are related, if at all, to the true attractor sets and measures of the original IFSs which are hyperbolic in $(\Real^n, d)$?

As for the latter question we can expect that because of the hyperbolicity featuring the original IFS, the outcomes yielded by the actual implementations of the approximation algorithms will resemble more or less the true ones, due to the orbit shadowing phenomenon known from the theory of dynamical systems. Moreover, we have to bear in mind that the hyperbolicity of IFS is a sufficient and not a necessary condition for the iteration of the Hutchinson and Markov operators to converge. Maybe, despite the potential loss of hyperbolicity by an IFS due to discretization, there are some other reasons for which the operators converge to some limiting sets and measures in a discrete space, to mention only a contractivity on average as one possibility. And if so, are the hypothetical invariant sets and measures attractive globally and thus are unique or can there be more than one invariant set or measure, some of them possessing their own basins of attraction? 

The above problems direct us to the central subject of this paper, which concerns \emph{discrete IFSs} that are not merely derivatives of hyperbolic IFSs but they are self-standing mathematical objects defined on a grid space from the very beginning. The main question around which this paper is organized is about the place of such self-standing discrete IFSs within the framework of the theory of iterated function systems. We treat the issues concerning discretization of hyperbolic IFSs as special instances of this general problem.

\section{Previous work}
The theory of hyperbolic IFSs is only the tip of the iceberg of a much more general mathematical theory that encompasses IFSs with place-dependent probabilities and composed of uncountable number of measurable maps 
that act on complete separable metric spaces (see e.g. \cite{Diac99,Sten02,Sten12}). The general IFS theory is largely built upon the theory of homogeneous discrete-time Markov chains on measurable state spaces. 
Within both theories a special position is occupied by Feller chains, which are specified by transfer operators of the form $(T f)(x) := \sum_{i=1}^N p_i(x) f (w_i(x))$ that maps $C(X)$, the space of real-valued bounded continuous functions on $(X, d)$, into iself, and $p_i : X\rightarrow [0, 1]$ are place-dependent probabilities assigned to the IFS maps $w_i$ with $\sum_i p_i(x) = 1$ at each $x\in X$. A Borel probability measure $\pi \in \mathcal{P}(X)$ is said to be invariant (w.r.t.~$T$) if $\int_x T f d\pi = \int_X f d\pi$ for all $f\in C(X)$. Now, an operator $T^{*} : \mathcal{P}(X)\rightarrow \mathcal{P}(X)$ is defined as an operator satisfying $\int_X T f d\mu = \int_X f d(T^*\mu)$ for all $f\in C(X)$, which is the adjoint of $T$, well-defined by the Riesz representation theorem\cite{Sten02,Barn88}. This operator is called the Markov operator in the books\cite{Barn93,Barn06} by Barnsley, and for an IFS with place-dependent probabilities this operator takes the form of $(T^{*} \mu)(B) = \sum_{i= 1}^N \int_{w_{i}^{-1}(B)} p_i(x) d\mu$, which for an IFS with constant probabilities reduces to the well-known definition presented in the previous section. By means of $T^*$, an IFS invariant measure can be defined as a Borel probability measure $\pi$ satisfying $T^*\pi = \pi$ and this measure is a stationary probability measure of the associated Markov (Feller) chain. The main effort of the IFS theory is concentrated on conditions that an IFS should meet for invariant measures to exist, to be unique, and to be attractive. As for the latter, a measure is (globally) attractive if $\int_X  f d((T^*)^{\circ m}\mu) \rightarrow \int_X f d\pi$ for all $\mu \in \mathcal{P}(X)$ and $f \in C(X)$, which reads as a weak convergence of $(T^*)^{\circ m}\mu$ to $\pi$. For example, a sufficient condition for an IFS to possess \emph{at least} one invariant measure is the compactness of $(X, d)$ (which can be weakened to a requirement that the chain get stuck in a compact set with probability one, see e.g.~\cite{Sten02}). This result follows from the Schauder fixed point theorem, because $\mathcal{P}(X)$ is a convex set in the space of signed Borel measures on $X$ and the compactness of $(X, d)$ implies the compactness of $\mathcal{P}(X)$ in the weak* topology, and $T^*: \mathcal{P}(X)\rightarrow \mathcal{P}(X)$ is (weak*) continuous\cite{Barn88,Barn06}. In turn, for an IFS to have a globally attractive (and thus unique) invariant measure it suffices that the IFS is composed of Lipschitz maps that are contractive on average, $\sup_{x\neq y} \sum_{i=1}^N p_i(x)d(w_i(x), w_i(y))/d(x, y) < 1$, the probability functions $p_i(.)$'s are positive and Dini-continuous, and $(X, d)$ is a locally compact separable metric space (moreover, the contractive on average condition is usually additionally weakened by the $\log$ function---see \cite{Barn88,Diac99,Sten02}).      

Since a uniform grid endowed with a metric induced by a norm is a complete, locally compact and separable space, and moreover any map on the grid is continuous, it follows that discrete IFSs are somewhere within the general theory and they have as much in common with general IFSs as countable Markov chains have with Markov chains on general measurable state spaces. Therefore, any result on general IFSs indirectly pertains also to discrete IFSs. Nevertheless, although countable Markov chains occupy a prominent place within the theory of stochastic processes, with a vast dedicated literature, the theory of general IFS seems to show little interest in discrete IFSs on their own. 
As a consequence, to a large extent IFSs on discrete spaces stay hidden in the depths of the general IFS theory. 

As already mentioned, however, a discrete space is truly the only space that we have at our disposal when it comes to algorithms and actual computation. Quite naturally, then, the area of algorithms and approximation issues is just the place where discrete IFSs come out.
Even so, they have not been perceived as fully fledged mathematical concepts but rather as some offending impaired derivatives of the normal, usually hyperbolic IFSs and which must be looked after because of the latter. One can distinguish three general approaches to discrete approximation of hyperbolic IFS invariant sets and measures: the one that utilizes the random iteration algorithm, the one that uses the iterates of the Hutchinson and Markov operators, and the one based on a construction by Williams\cite{Will71}. In this paper we focus on the dyscrete analysis of the first one, which is the most popular method for approximating IFS attractor and measures. 

The random iteration algorithm (proposed by Barnsley and Demko for hyperbolic IFSs in \cite{Barn85} and then extended to IFSs contractive on average with the aid of Elton's ergodic theorem\cite{Elto87}) works, in principle, in the domain of floating point numbers. Therefore, the computation is usually performed with much higher accuracy than the resolution at which a result (an approximation of the attractor or the invariant measure) yielded by the algorithm is stored in memory. Nevertheless, the floating point domain is also a discrete space, so in one way or another we have to do with a discrete IFS that acts here, at best, on a discrete space of floating point numbers. To the best of our knowledge, the first analysis of a computer implementation of the random iteration algorithm in its initial, recreational version, the chaos game, to play with renderings of Sierpi\'nski's triangles, is done, in terms of Markov chains and orbit shadowing, by Goodman in \cite{Good91}. But a milestone was a book by Peruggia\cite{Peru93}, which is entirely dedicated to the analysis of the random iteration algorithm running for discretized versions of hyperbolic IFSs. This book opened our eyes to the problems connected with discretization of IFSs, showing that despite the hyperbolicity of an IFS and regardless of the precision of computation, a discretized version of a hyperbolic IFS may no longer be hyperbolic and may possess multiple attractors and invariant measures. In the context of contemporary publications, especially textbooks in which the hyperbolicity and the resulting uniqueness of attractors and measures were foregrounded, these results were quite surprising. However, there is a problem with treating the book as providing a full theoretical description of the actual, numerical behavior of the random iteration algorithm for hyperbolic IFSs, because the majority of the theorems demonstrated there are constructed on the assumption that one of the IFS maps has its fixed point coinciding with the origin of the Cartesian coordinate system in a discrete space. Obviously, as Peruggia advocates this assumption, every IFS can be appropriately translated to meet this condition, yet there always remains the question about the result of the algorithm if such an imposed transformation is not applied. In addition, the book is focused mainly on the two-dimensional discrete space of pixels and not every theorem demonstrated there easily generalizes to higher dimensions (we will encounter at least one instance in this paper). 
Nevertheless, Peruggia's book is the first and, to our knowledge, sole publication that is entirely focused on the problem of discretization of IFS and takes it up with appropriate mathematical rigor. Its thematic uniqueness makes it an offer you can't refuse whenever a discussion in a book or paper touches upon (occasionally) the subject of IFSs and discretization (see e.g. \cite{Barn05,Barn06,Barn10}).

\section{Organization of the paper}
In Sec.~\ref{sec_minabset} we define fundamental concepts of this paper such as a $\delta$-grid space and a rounding operator, but first of all we generalize the concept of a minimal absorbing set. Originally, the concept was introduced by Peruggia in \cite{Peru93} in connection with discrete versions of contractions on $\Real^2$, as discrete counterparts of fixed points of the latter. We extend this idea with respect to general mappings on a discrete domain and treat discrete versions of contractions as a special case of this generalization. Analogously to a fixed point, which is the attractor of a single contraction, a minimal absorbing set can be viewed as a family of attractors of a (usually non-contractive) map on a discrete space. 
Minimal absorbing sets and their component attractors accompanied with basins of attraction are elementary concepts which we will utilize on numerous occasions in this paper. 

In Sec.~\ref{sect_more_often} we deal with the probability of losing the contractive property by a contraction due to discretization of the Euclidean space (the Euclidean metric). The study is done in terms of the structure of a minimal absorbing set. We examine some relationships between discretization parameters and the structure of minimal absorbing sets that result from the process of discretization of a contraction. Then we give a theoretical foundation for numerical experiments concerning statistics pertaining to the structure of the sets resulting from discretizing two-dimensional affine contractions. We discuss the results of the experiments and show that the probability of losing the contractive property due to discretization by an affine contraction is independent of discretization accuracy and, moreover, it is higher than the probability that a discretized version of the map retains contractivity.  

In Sec.~\ref{Sec_DIFS} we study discrete iterated function systems with place-dependent probabilities (DIFS) as self-standing constructions defined at the very beginning on a discrete space. A discretized version of a regular hyperbolic IFS with constant probabilities comes up in the discussion as a special case of this general construction. We begin the analysis of DIFS invariant sets and measures from the point of view of the random iteration algorithm and Markov chains on countable state spaces (Sec.~\ref{sec_statDist}). We look at the measures from the standpoint of their attractive and ergodic properties and the consequent possibility of approximating the measures (and thus also their supports) by means of the random iteration algorithm in Sec.~\ref{sec_be_attractive}. 

In the meantime, in Sec.~\ref{on_multiattractor}, we review the possibility of the existence of multiple attractors for the case of discretized versions of hyperbolic IFSs. The DIFSs arising from hyperbolic IFSs are studied in the context of the random iteration algorithm in more detail in Sec.~\ref{sec_RIA_hyperbolic}. The theorems offered in this part can be regarded as a generalization of results obtained by Peruggia\cite{Peru93} for the random iteration algorithm running in a pixel space. These theorems generalize the results in that they do not assume a certain specified dimension of space nor do they make some additional assumptions on the relation between a hyperbolic IFS and a discrete space (such as, for instance, a fixed point of an IFS map coinciding with the origin of the coordinate system in a discrete space).      

Finally, in Sec.~\ref{sec_conclude} we remark on some open problems concerning DIFSs themselves as well as their special cases that arise as derivatives of iterated function systems on continuous metric spaces.      

\section{Minimal absorbing sets}\label{sec_minabset}
In this section we develop a theory of minimal absorbing sets for maps acting in a discrete space. In some respects a minimal absorbing set of a map in the discrete space can be viewed as the counterpart of fixed points of contractions in $\Real^n$. However we will define and analyse minimal absorbing sets in isolation from contractive maps in $\Real^n$, as freestanding structures that can arise from the dynamics of a single map in a discrete space. Then we will show that discretization of a contraction in $\Real^n$ leads to a map in a discrete space that can be regarded as a special case of the more general construction we will have considered earlier. We take advantage of the concept of minimal absorbing set throughout this paper, and such a generalization allows us to study, in Sec.~\ref{Sec_DIFS}, discrete iteration function systems on their own, with the ones that arise from discretization of hyperbolic IFSs as a special case.     

Hereafter, we will assume that the metric $d$ on $\Real^n$ is induced by a norm, that is, $d(x, y) := \left \| x - y \right \|$. Such metrics possess a number of properties, which accord with intuition, but are not present in the general case of a metric on $\Real^n$. First, we get that every metric induced by a norm is translation invariant, i.e., for any $x, y, t \in \Real^n$, $d(x + t, y + t) = d(x, y)$. This property, along with the definition of the norm, implies that any ball $B(c, r) = \{x \in \Real^n : d(c, x) < r\}$ is centrally symmetric. In turn, it follows that the diameter $\diam(B(c, r)) = r \diam(B(\mathbf{0}, 1)) = 2r$ and that the distance of the farthest boundary point of the ball from the ball's center is equal to half the ball's diameter, $\sup\{d(c, x) : x \in B(c, r) \} = \diam(B(c, r))/2 = r$. 

The following definition introduces the fundamental structure of this paper, that is, a discrete counterpart of $\Real^n$: 
\newline
\begin{definition}
Let $\Grid^{n}(\delta)$ be the regular tiling of $\mathbb{R}^n$ by disjoint, half-open $n$-dimensional cubes of side $\delta > 0$, which are defined by
\begin{equation}\label{delta_cube}
    C_{\delta}(m_1,\dots, m_n) := \Big[(m_1 - \tfrac{1}{2})\delta, (m_1 + \tfrac{1}{2})\delta \Big)\times \dots \times \Big[(m_n - \tfrac{1}{2})\delta, (m_n + \tfrac{1}{2})\delta \Big), 
\end{equation}
for $m_1,\dots, m_n \in \mathbb{Z}$, and thus 
\begin{equation*}
    \Grid^{n}(\delta) := \big\{C_{\delta}(m_1,\dots, m_n) : m_1,\dots, m_n \in \mathbb{Z} \big\}.
\end{equation*}
We will call $\Grid^{n}(\delta)$ the \emph{$\delta$-grid} in $\mathbb{R}^n$. We define the \emph{$\delta$-discretization} of $\Real^n$, and denote it by $\mathcal{D}^n(\delta)$, as the set of the centers of the cubes in $\Grid^{n}(\delta)$, that is,   
\begin{equation*}
    \mathcal{D}^n(\delta) := \big\{ \delta[m_1, \dots, m_n] : m_1,\dots, m_n \in \mathbb{Z} \big\}.
\end{equation*}
Moreover, for the sake of brevity, we will write $\theta$ to denote $\diam_d(C_{\delta})/2$, half of the diameter of a $\delta$-cube with respect to the metric $d$. 
\end{definition}
\medskip
The discretization space $\mathcal{D}^n(\delta)$ is a formal model of the actual space upon which the concrete implementations of algorithms really act, and from which they yield their results. For example, if we regard squares $C_{\delta}(m_1, m_2)$ as pixels of size $\delta$, then $\mathcal{D}^2(\delta)$ can be identified with an image space. However, when it comes to floating-point arithmetic, the issue is more complicated. Due to a fixed number of significant digits\footnote{Typically the 23-bit and 52-bit mantissa for single- and double-precision, which translates to about 7 and 16 decimal digits.} in the floating point number representation, the representable numbers are not evenly spaced and the distance between consecutive numbers grows with scale\footnote{More strictly, the numbers are evenly spaced in the intervals $[2^j, 2^{j+1}]$, possibly excluding one or both endpoints, and at the endpoints the interval between adjacent numbers doubles\cite{Harr99}.}. In the context of the $\delta$-grid model, this means that to represent the natural setting for floating point arithmetic we would have to use a grid with $\delta$ a monotonically increasing function of real numbers. Nevertheless, to keep things as simple as possible, in such a case we can assume constant $\delta$ set to a certain tiny number that roughly reflects the error carried by the floating point approximation in a bounded interval $[-a, a]$. For example, we can follow the approach used in numerical analysis and accept $\delta$ to be equal to the machine accuracy, which is the smallest (in magnitude) floating point number which, when added to the floating point $1.0$, produces a floating point result different from $1.0$ (see e.g.~\cite{Pres92}). 

In the sequel, for convenience, we will often treat $\mathcal{D}^n(\delta)$ as a subset of $\Real^n$, that is, without defining any explicit mapping of points from $\mathcal{D}^n(\delta)$ to $\Real^n$; in other words, we assume that any point of $\mathcal{D}^n(\delta)$ is by definition a point of $\Real^n$. The converse is in general not true and is the subject of the next definition which establishes a relationship between points and mappings on $\Real^n$ and the ones of $\mathcal{D}^n(\delta)$. 
\medskip
\begin{definition}\label{roundoff_def}
We define the \emph{$\delta$-roundoff of a point} $x \in \Real^n$ as the result of the operation~ $\widetilde{.} : \Real^n \rightarrow \mathcal{D}^n(\delta)$ such that $\tilde{x} = \delta m$, where $m \in \mathbb{Z}^n$ such that $x \in C_{\delta}(m)$. Using the operator, the \emph{$\delta$-roundoff of a mapping} $w : \Real^n \rightarrow \Real^n$ is defined to be the mapping $\tilde{w} : \mathcal{D}^n(\delta) \rightarrow \mathcal{D}^n(\delta)$ such that $\tilde{w}(\tilde{x}) = \widetilde{w(\tilde{x})}$.
\end{definition}
\medskip
In words, given a point in $\Real^n$, the operator $\tilde{.}$ finds a $\delta$-cube within which the point resides and returns the cube's center. Obviously, it is only a conceptual model of actual rounding that goes in real computation environment, in which there are no such things as real numbers to be rounded. The actual process operates all the time only on some representations of ideal reals and the representations are accurate at most for a finite subset of rational numbers. Moreover, as to rounding a mapping $w$, our model is very optimistic because it assumes that the result of a computed and rounded value of $w$ is within $0.5\delta$ of the exact result (w.r.t. the maximum metric $d_{\infty}$), regardless of the class of the mapping itself. This is the best possible accuracy of computation in a given $\delta$-grid setup. However, in floating point computation (IEEE 754 standard) such precision\footnote{Here, we identify the value of $\delta$ with the ULP (acronym for \emph{unit in the last place}), which is a measure of accuracy of floating point arithmetic, defined usually (but not always) as the gap between the two floating-point numbers nearest the number to be rounded\cite{Harr99,Mull05}.} is guaranteed only for elementary arithmetic operations (addition, subtraction, multiplication, division, and square root). Since mappings are typically composed of more than one elementary operation, the error accumulates. Nevertheless, if a mapping consists of a finite number of elementary operations (such as affine mappings, for instance), then the error of rounding is bounded above by a constant, so our model of rounding remains valid up to a multiple of $\delta$ by a constant. 

It is also worth noting that the operator $\tilde{.}$ is a surjection and in fact any mapping on $\mathcal{D}^n(\delta)$ represents uncountable many mappings on $\Real^n$. For now on, if, in a given context, we do not take advantage of some specific properties of the mapping $w : \Real^n \rightarrow \Real^n$ when discussing its $\delta$-roundoff, for brevity we will often refer to $\tilde{w}$ as a mapping on $\mathcal{D}^n(\delta)$. 

The next definition introduces the concept of an absorbing set---the set that has the ability to attract the orbits $\{ \tilde{w}^{\circ i}(\tilde{x})\}_{i}$ and then trap them forever. 
\medskip
\begin{definition}\label{Def_Abs_Set}
Let $\tilde{w} : \mathcal{D}^n(\delta) \rightarrow \mathcal{D}^n(\delta)$, and let $C$ be a nonempty subset of $\mathcal{D}^n(\delta)$ such that $\tilde{w}(C) \subset C$. We will say that a set $\Lambda \subset C$ is an \emph{absorbing set for $\tilde{w}$ in $C$} if
\begin{equation*}
    \forall \tilde{x} \in C, \exists N \in \Natural, \forall i \geq N, \tilde{w}^{\circ i}(\tilde{x}) \in \Lambda. 
\end{equation*}
\end{definition}
\medskip

In the sequel, if $\Lambda$ is an absorbing set for $\tilde{w}$ in $\mathcal{D}^n(\delta)$, we will just write that $\Lambda$ is an absorbing set for $\tilde{w}$, that is, without pointing out $\mathcal{D}^{n}(\delta)$ as a superset of $\Lambda$. Similarly, if it is obvious from the context which mapping $\tilde{w}$ is in question, for brevity we will often omit explicit indication of the mapping and write that $\Lambda$ is an absorbing set in a set. 

It is easy to prove the properties of absorbing sets listed in the following theorem:
\medskip
\begin{thmm}\label{cor_abs_set} 
For $\tilde{w} : \mathcal{D}^{n}(\delta) \rightarrow \mathcal{D}^{n}(\delta)$ and any nonempty set $C \supset \tilde{w}(C)$ the following statements hold:
\newline
\emph{(a)} Every absorbing set is nonempty (by definition).
\medskip
\newline
\emph{(b)} Since $C$ is nonempty, then by definition $C$ is an absorbing set in $C$, so there is always at least one absorbing set in $C$.
\medskip
\newline
\emph{(c)} If $\{\Lambda_i\}_{1\leq i\leq K}$ is a finite family of $K$ absorbing sets in $C$, then also $\bigcap_{i=1}^K \Lambda_i$ is an absorbing set in $C$.
\medskip
\newline
\emph{(d)} If $\Lambda$ is an absorbing set in $C$, then also $\tilde{w}(\Lambda)$ is an absorbing set in $C$.
\medskip
\newline
\emph{(e)} For any absorbing set $\Lambda$ in $C$, there exists $B \subset \Lambda$ such that $\tilde{w}(B) \subset B$ and $B$ is an absorbing set in $C$. 
\medskip
\newline
\emph{(f)} If $\Lambda$ is an absorbing set in $C$, and $C$ is an absorbing set in $D$, then $\Lambda$ is also an absorbing set in $D$.
\end{thmm}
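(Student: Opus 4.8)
The plan is to verify each item directly from Definition~\ref{Def_Abs_Set}, since the dynamics is deterministic and almost everything reduces to bookkeeping of the threshold indices $N$. Items (a) and (b) are immediate: nonemptiness of $C$ furnishes at least one orbit that must eventually land in any absorbing set, forcing that set to be nonempty; and the forward invariance $\tilde{w}(C)\subset C$ propagates by induction to $\tilde{w}^{\circ i}(C)\subset C$, so $C$ absorbs itself with threshold $N=0$. For (c) I would fix $\tilde{x}\in C$, collect the finitely many thresholds $N_1,\dots,N_K$ witnessing absorption into $\Lambda_1,\dots,\Lambda_K$, and set $N=\max_i N_i$; beyond this index the orbit lies in every $\Lambda_i$ simultaneously, hence in the intersection. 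Finiteness is essential precisely here, as it is what lets me take a maximum.

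For (d) I first note $\tilde{w}(\Lambda)\subset\tilde{w}(C)\subset C$, so $\tilde{w}(\Lambda)$ is an admissible candidate, and then shift the threshold from $N$ to $N+1$: if $\tilde{w}^{\circ i}(\tilde{x})\in\Lambda$ for $i\ge N$, then $\tilde{w}^{\circ i}(\tilde{x})=\tilde{w}\big(\tilde{w}^{\circ(i-1)}(\tilde{x})\big)\in\tilde{w}(\Lambda)$ for $i\ge N+1$. Item (f) is transitivity of absorption: given $\tilde{x}\in D$, I first absorb into $C$ after $N_1$ steps, reaching $z=\tilde{w}^{\circ N_1}(\tilde{x})\in C$, then absorb the orbit of $z$ into $\Lambda$ after $N_2$ further steps, and conclude with the combined threshold $N_1+N_2$ (here $\Lambda\subset C\subset D$ secures the inclusion required of an absorbing set in $D$).

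The one part needing a genuine idea rather than index arithmetic is (e), which asserts the existence of a \emph{forward-invariant} absorbing subset. The subtlety is that $\Lambda$ itself need not satisfy $\tilde{w}(\Lambda)\subset\Lambda$, since a point of $\Lambda$ may leave $\Lambda$ temporarily before its orbit is recaptured. My proposal is to define
\[
 B:=\big\{\, y\in\Lambda : \tilde{w}^{\circ i}(y)\in\Lambda \text{ for every } i\ge 0 \,\big\},
\]
the set of points of $\Lambda$ whose \emph{entire} forward orbit stays in $\Lambda$. Forward invariance is then automatic: if $y\in B$ then $\tilde{w}(y)\in\Lambda$ and $\tilde{w}^{\circ i}(\tilde{w}(y))=\tilde{w}^{\circ(i+1)}(y)\in\Lambda$ for all $i\ge 0$, so $\tilde{w}(y)\in B$. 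To see that $B$ is still absorbing in $C$, I would take $\tilde{x}\in C$ with absorption threshold $N$ and set $z=\tilde{w}^{\circ N}(\tilde{x})$; every forward iterate of $z$ equals some $\tilde{w}^{\circ(N+j)}(\tilde{x})$ with index $\ge N$, hence lies in $\Lambda$, which says exactly that $z\in B$. Forward invariance then carries the whole tail $\{\tilde{w}^{\circ i}(\tilde{x}):i\ge N\}=\{\tilde{w}^{\circ(i-N)}(z):i\ge N\}$ into $B$, so the same $N$ witnesses absorption into $B$. The main thing to get right is exactly this interplay: one spends the absorption threshold once to reach a point whose \emph{full} orbit is trapped, and then invokes invariance to place the entire tail in the smaller set $B$ rather than merely in $\Lambda$.
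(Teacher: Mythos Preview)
Your proof is correct in all parts; the paper itself omits the proof entirely, noting only that ``it is easy to prove the properties of absorbing sets listed in the following theorem.'' Your construction for (e), taking $B$ to be the set of points of $\Lambda$ whose entire forward orbit remains in $\Lambda$, is the natural choice and your verification of forward invariance and absorption is complete.
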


\medskip
In search of the minimal absorbing set, in the theorem below we look at the result of the intersection of all absorbing sets in a given set. 
\medskip
\begin{thmm}\label{MinAbsorb_thm}
Let $\{ \Lambda_{\alpha}\}_{\alpha \in \mathcal{I}}$ be the (possibly uncountable) family of all absorbing sets for $\tilde{w}$ in $C \subset \mathcal{D}^n(\delta)$. Then $\mathcal{M} := \bigcap_{\alpha \in \mathcal{I}}\Lambda_{\alpha}$ is the set of all periodic points of $\tilde{w}$ in $C$, and thus $\mathcal{M} = \tilde{w}(\mathcal{M})$.
\end{thmm}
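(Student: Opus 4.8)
The plan is to identify $\mathcal{M}$ with the set $P$ of all periodic points of $\tilde{w}$ in $C$ by establishing the two inclusions $P \subseteq \mathcal{M}$ and $\mathcal{M} \subseteq P$ separately, and then to read off the invariance $\mathcal{M} = \tilde{w}(\mathcal{M})$ from the fact that $\tilde{w}$ maps $P$ bijectively onto itself. Throughout I would invoke the standing hypothesis $\tilde{w}(C) \subset C$ from Definition~\ref{Def_Abs_Set}, which guarantees that the whole forward orbit $\{\tilde{w}^{\circ i}(\tilde{x})\}_{i \geq 0}$ of any $\tilde{x} \in C$ stays inside $C$; in particular the entire cycle of a periodic point lies in $C$.

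For $P \subseteq \mathcal{M}$, I would fix a periodic point $p \in C$ of period $k$ (so $\tilde{w}^{\circ k}(p) = p$) and an arbitrary absorbing set $\Lambda_{\alpha}$. Applying the defining property of an absorbing set to the starting point $\tilde{x} = p$ produces an $N$ with $\tilde{w}^{\circ i}(p) \in \Lambda_{\alpha}$ for all $i \geq N$. Choosing a multiple $mk \geq N$ then gives $p = \tilde{w}^{\circ mk}(p) \in \Lambda_{\alpha}$. As $\Lambda_{\alpha}$ was arbitrary, $p$ lies in the intersection $\mathcal{M}$ of all absorbing sets, proving $P \subseteq \mathcal{M}$.

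The reverse inclusion $\mathcal{M} \subseteq P$ is where the real work lies, and I expect the crucial ingredient to be a \emph{no-return} property of deterministic discrete dynamics: if the orbit of some $\tilde{x}$ visits a point $q$ at two distinct times $i_1 < i_2$, then $q = \tilde{w}^{\circ(i_2 - i_1)}(q)$, so $q$ is periodic. Contrapositively, a non-periodic $q$ occurs at most once in the orbit of any $\tilde{x} \in C$, hence every orbit eventually avoids $q$. This is exactly what is needed to show that $C \setminus \{q\}$ is an absorbing set in $C$ whenever $q$ is non-periodic; note that it is nonempty, since a non-periodic $q$ forces $C \neq \{q\}$ (a singleton forward-invariant set would make $q$ a fixed point, hence periodic). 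It follows that $\mathcal{M} \subseteq C \setminus \{q\}$, so $q \notin \mathcal{M}$, and ranging over all non-periodic $q$ yields $\mathcal{M} \subseteq P$. I regard verifying that $C \setminus \{q\}$ is genuinely absorbing, via this no-return argument, as the main (though conceptually short) obstacle of the proof.

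Finally, having shown $\mathcal{M} = P$, the identity $\mathcal{M} = \tilde{w}(\mathcal{M})$ reduces to $\tilde{w}(P) = P$. The inclusion $\tilde{w}(P) \subseteq P$ holds because the image of a point of period $k$ is again periodic of period $k$, and $P \subseteq \tilde{w}(P)$ holds because any $p \in P$ of period $k$ equals $\tilde{w}(\tilde{w}^{\circ(k-1)}(p))$ with $\tilde{w}^{\circ(k-1)}(p) \in P$. This closes the argument.
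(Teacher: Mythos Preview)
Your proposal is correct and follows essentially the same approach as the paper: both prove $P \subseteq \mathcal{M}$ by using that a periodic orbit returns infinitely often so must meet every absorbing set, and both prove $\mathcal{M} \subseteq P$ by showing that removing a non-periodic point from an absorbing set still leaves an absorbing set (you use $C \setminus \{q\}$ directly, while the paper uses $\Lambda \setminus \{\tilde{x}\}$ for an arbitrary absorbing $\Lambda$, which amounts to the same thing since $C$ itself is absorbing). Your explicit verification of $\tilde{w}(P) = P$ at the end is a minor addition the paper leaves implicit.
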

\begin{proof}
First we show that $\mathcal{M}$ includes all periodic points in $C$.
We have to show that given any periodic point $\tilde{x} \in C$, $\tilde{x}$ belongs to every absorbing set $\Lambda_{\alpha}$ for $\tilde{w}$. On the contrary let us assume that $\tilde{x} \notin \Lambda_{\alpha}$ for some $\alpha \in \mathcal{I}$. Since $\tilde{x}$ is a periodic point, there exists $k \in \Natural$ such that $\tilde{w}^{\circ k}(\tilde{x}) = \tilde{x}$, and thus $\tilde{w}^{\circ (\alpha k)}(\tilde{x}) = \tilde{x}$ for any $\alpha \in \Natural$.  It follows that for any $N \in \Natural$ there is $i \geq N$ such that $\tilde{w}^{\circ i}(\tilde{x}) = \tilde{x} \notin \Lambda_{\alpha}$, which contradicts the assumption of that $\Lambda_{\alpha}$ is an absorbing set for $\tilde{w}$. Therefore, each absorbing set $\Lambda_{\alpha}$ has to include all periodic points in $C$, and hence the periodic points are included in the intersection of the absorbing sets. 

Now we show that if $\tilde{x} \in \mathcal{M}$, then $\tilde{x}$ must be periodic. On the contrary, let us assume that $\tilde{x} \in \mathcal{M}$ and $\tilde{x}$ is non-periodic, which implies that  $\tilde{w}^{\circ i}(\tilde{x}) \neq \tilde{x}$ for all $i\in \Natural$. Let $\Lambda$ be any absorbing set such that $\tilde{x} \in \Lambda$. We will show that $\Lambda\setminus \{\tilde{x}\}$ is an absorbing set in $C$ and thus $\tilde{x}$ cannot be in $\mathcal{M}$. Since $\Lambda$ is absorbing in $C$, we get that for any $\tilde{y} \in C$ there is $N(\tilde{y}) \in \Natural$ so that $\tilde{w}^{\circ i}(\tilde{y}) \in \Lambda$ for all $i \geq N(\tilde{y})$. Therefore, if $\tilde{y}$ is such that $\tilde{w}^{\circ i}(\tilde{y}) \neq \tilde{x}$ for all $i \in \Natural$, then for all $i \geq N(\tilde{y})$, $\tilde{w}^{\circ i}(\tilde{y}) \in \Lambda\setminus \{\tilde{x}\}$. Hence $\Lambda\setminus \{\tilde{x}\}$ absorbs the orbits $\{ \tilde{w}^{\circ i}(\tilde{y})\}_{i\in \Natural}$ that do not intersect $\{ \tilde{x}\}$. Now, because $\tilde{x}$ is non-periodic, $\{ \tilde{w}^{\circ i}(\tilde{x})\}_{i\in \Natural}$ does not intersect $\{\tilde{x} \}$, and thus $\tilde{w}^{\circ i}(\tilde{x}) \in \Lambda\setminus \{\tilde{x}\}$ for all $i\geq N(\tilde{x})$. Therefore, for any $\tilde{y} \in C$ such that $\tilde{w}^{\circ k}(\tilde{y}) = \tilde{x}$ for a certain $k \in \Natural$, we get that $\tilde{w}^{\circ i}(\tilde{y}) \in \Lambda\setminus \{\tilde{x}\}$ for all $i \geq N(\tilde{x})+k$. Hence $\Lambda\setminus \{\tilde{x}\}$ also absorbs the orbits that do intersect $\{ \tilde{x}\}$. Therefore, $\Lambda\setminus \{\tilde{x}\}$ is an absorbing set in $C$, and it follows that $\tilde{x}$ is not in $\mathcal{M}$, so we have arrived at a contradiction, which completes the proof.   
\end{proof}
\medskip
\begin{remark}\label{rem_m_nonabsor}
Note that although $\mathcal{M}$ arises from the intersection of  absorbing sets, it does not follow that $\mathcal{M}$ is an absorbing set itself. In general, when $C \supset \tilde{w}(C)$ is unbounded (and hence the family of the absorbing sets in $C$ may be uncountable), there is no guarantee that the intersection is nonempty, and even if it is not, $\mathcal{M}$ does not have to attract points lying outside. As an example of the latter case consider a $\delta$-roundoff of the mapping $w(x) = \lambda x$, $\lambda > 1$, with $C = \mathcal{D}^n(\delta)$. There are infinitely many (unbounded) absorbing sets, each includes $\mathbf{0}$, and $\mathcal{M} = \{\mathbf{0}\}$, because $\mathbf{0}$ is the only periodic point of $\tilde{w}$. However, $\mathcal{M}$ does not have the attractive property of an absorbing set required by Def.~\ref{Def_Abs_Set}. 
\end{remark}
\medskip
Now observe that for any nonempty set $B \subset \mathcal{D}^n(\delta)$ such that $B \supset \tilde{w}(B)$, an orbit $\{ \tilde{w}^{\circ i}(\tilde{x})\}$ of $\tilde{x}\in B$ does not have to visit all points in $B$. This observation suggests that in general $B$ can be divided into disjoint subsets that are domains for some orbits in $B$ and which are omitted by the remaining orbits in $B$---the ones that stay in the boundaries of the other subsets of the division. To this end, we define a relation $\overset{orb}{\sim}$ on $B$, which given a couple of points in $B$ checks if the orbits of the couple coincide at a certain point:  
\begin{equation}
    \tilde{x}\overset{orb}{\sim} \tilde{y} := \{ (\tilde{x}, \tilde{y}) \in B^2 : \exists j, k \in \Natural\; s.t. \; \tilde{w}^{\circ j}(\tilde{x}) = \tilde{w}^{\circ k}(\tilde{y})\; \}.  
\end{equation}
It is easy to see that the relation is an equivalence relation and thus uniquely decomposes $B$ into the union of disjoint nonempty subsets, $B = \bigcup_i B_i$ with $B_i$ being the equivalence classes of the relation. By definition, each $B_i$ is composed of  orbits in $B$ that meet at a certain point and then, naturally, coincide from that point on. Furthermore, since $\mathcal{D}^n(\delta)$ is countable and $B_i$ are disjoint, the decomposition consists of a countable number of sets $B_i$. Moreover, if $B$ is additionally an absorbing set in a set $C$, then the decomposition of $B$ extends to $C$, resulting in a countable partition of $C$ into the sets of the form $\{\tilde{x} \in C : \exists i\in \Natural \; s.t. \; \tilde{w}^{\circ i}(\tilde{x}) \in B_i \}$. 

We summarize the above observations in the form of the following definition concerning the minimum of absorbing sets, their components and basins of attraction:
\medskip
\begin{definition}\label{minabsorbing_def}
If the unique set $\mathcal{M} \subset C$ defined in Theorem~\ref{MinAbsorb_thm} meets the conditions of an absorbing set in $C$ we will refer to it as the \emph{minimal absorbing set for $\tilde{w}$ in $C$} and denote it by $\mathcal{M}[\tilde{w}, C]$. If $\mathcal{M}$ is the minimal absorbing set for $\tilde{w}$ in the whole space $\mathcal{D}^n(\delta)$, we will say that $\mathcal{M}$ is just the \emph{minimal absorbing set for $\tilde{w}$} and denote it by $\mathcal{M}[\tilde{w}]$. The disjoint, nonempty sets of the unique, countable decomposition of $\mathcal{M}[\tilde{w}, C]$ with respect to the relation $\overset{orb}{\sim}$ will be called the set's \emph{components} and denoted by $\mathcal{M}_i[\tilde{w}, C]$, $i = 1, \dots$. Finally, the \emph{basin of attraction} of a component $\mathcal{M}_i[\tilde{w}, C]$ is defined by
\begin{equation*}
    \mathcal{B}[\mathcal{M}_i[\tilde{w}, C]] := \{ \tilde{x} \in C : \exists i \in \Natural \textit{ s.t. } \tilde{w}^{\circ i}(\tilde{x}) \in \mathcal{M}_i[\tilde{w}, C] \}. 
\end{equation*}
\end{definition}

\medskip
Clearly, periodic points $\tilde{x}$ and $\tilde{y}$ are in the coincidence orbit relation, $\tilde{x} \overset{orb}{\sim} \tilde{y}$, if and only if they share the same periodic orbit. Since, by Theorem~\ref{MinAbsorb_thm}, the minimum set $\mathcal{M}[\tilde{w}, C]$ consists of periodic points, we immediately get the following:
\medskip
\begin{corollary}\label{component_periodic}
Each component $\mathcal{M}_i[\tilde{w}, C]$ of the minimal absorbing set is equal to one of the periodic orbits for $\tilde{w}$ in $C$. Hence, the cardinality of every component is finite and
\begin{equation*}
    \tilde{w}(\mathcal{M}_i[\tilde{w}, C]) = \mathcal{M}_i[\tilde{w}, C].
\end{equation*}
\end{corollary}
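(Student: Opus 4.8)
The plan is to exploit the characterization, noted immediately above the statement, that restricted to periodic points the orbit-coincidence relation $\overset{orb}{\sim}$ reduces to lying on a common periodic orbit. By Theorem~\ref{MinAbsorb_thm}, $\mathcal{M} = \mathcal{M}[\tilde{w}, C]$ consists entirely of periodic points, and by Definition~\ref{minabsorbing_def} each component $\mathcal{M}_i[\tilde{w}, C]$ is an equivalence class of $\overset{orb}{\sim}$ restricted to $\mathcal{M}$. Hence it suffices to show that, for periodic $\tilde{x}, \tilde{y} \in \mathcal{M}$, one has $\tilde{x} \overset{orb}{\sim} \tilde{y}$ if and only if $\tilde{x}$ and $\tilde{y}$ share the same periodic orbit; the three assertions of the corollary then follow at once.

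First I would prove this equivalence. For the easy direction, if $\tilde{x}$ and $\tilde{y}$ lie on the same periodic orbit then $\tilde{y}$ is a forward iterate of $\tilde{x}$, say $\tilde{y} = \tilde{w}^{\circ a}(\tilde{x})$, and periodicity lets us choose the exponents positive, so $\tilde{x} \overset{orb}{\sim} \tilde{y}$ holds directly from the definition of the relation. For the converse, suppose $\tilde{w}^{\circ j}(\tilde{x}) = \tilde{w}^{\circ k}(\tilde{y}) =: z$ for some $j, k \in \Natural$. Let $p$ be the minimal period of $\tilde{x}$, so that its forward orbit $O(\tilde{x}) := \{\tilde{w}^{\circ i}(\tilde{x}) : i \geq 0\}$ is the finite set $\{\tilde{x}, \tilde{w}(\tilde{x}), \dots, \tilde{w}^{\circ(p-1)}(\tilde{x})\}$ on which $\tilde{w}$ acts as a cyclic permutation. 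Since $z \in O(\tilde{x})$, iterating $\tilde{w}$ starting from $z$ cycles through all of $O(\tilde{x})$ and in particular returns to $\tilde{x}$; thus $O(z) = O(\tilde{x})$. The identical argument applied to $\tilde{y}$ gives $O(z) = O(\tilde{y})$, whence $O(\tilde{x}) = O(\tilde{y})$, i.e.\ $\tilde{x}$ and $\tilde{y}$ lie on the same periodic orbit.

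With the equivalence in hand, the equivalence classes of $\overset{orb}{\sim}$ on $\mathcal{M}$ are exactly the distinct periodic orbits of $\tilde{w}$ in $C$, so each component $\mathcal{M}_i[\tilde{w}, C]$ is precisely one such orbit. Its cardinality equals the period of any of its points and is therefore finite. Finally, $\tilde{w}$ restricted to a periodic orbit is a cyclic permutation of that finite set, hence a bijection of the orbit onto itself, which yields $\tilde{w}(\mathcal{M}_i[\tilde{w}, C]) = \mathcal{M}_i[\tilde{w}, C]$.

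The only point demanding care is the converse direction of the equivalence: a common forward meeting point $z$ a priori only makes the two orbits \emph{intersect}, and one must invoke periodicity to upgrade intersection to coincidence. The clean way to do this is the observation that on a finite periodic orbit forward iteration is invertible (it eventually wraps around), so membership of $z$ in $O(\tilde{x})$ forces $\tilde{x} \in O(z)$ and hence $O(z) = O(\tilde{x})$. Everything else is routine bookkeeping resting on Theorem~\ref{MinAbsorb_thm}.
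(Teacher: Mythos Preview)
Your proposal is correct and follows exactly the approach the paper takes: the paper simply asserts, in the sentence preceding the corollary, that ``periodic points $\tilde{x}$ and $\tilde{y}$ are in the coincidence orbit relation $\overset{orb}{\sim}$ if and only if they share the same periodic orbit'' and then declares the corollary immediate from Theorem~\ref{MinAbsorb_thm}. You have faithfully unpacked that ``Clearly'' step---in particular the converse direction, where you correctly note that periodicity is needed to upgrade orbit intersection to orbit coincidence---and the remaining deductions match the paper's one-line reasoning.
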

\medskip

As shown in Remark \ref{rem_m_nonabsor}, an intersection of absorbing sets does not always yield an absorbing set. The following theorem gives a sufficient condition for the existence of the minimal absorbing set:
\medskip
\begin{thmm}\label{bounded_min}
If the set $C \subset \mathcal{D}^n(\delta)$ is bounded and $C \supset \tilde{w}(C)$, then the minimal absorbing set  $\mathcal{M}[\tilde{w}, C]$ exists.
\end{thmm}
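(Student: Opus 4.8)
The plan is to reduce the statement to the elementary dynamics of a self-map of a \emph{finite} set by first exploiting the one structural feature that separates a discrete grid from $\Real^n$: since the points of $\mathcal{D}^n(\delta)$ are separated by at least $\delta$ in each coordinate, any bounded subset of $\mathcal{D}^n(\delta)$ contains only finitely many grid points. So the very first step I would take is to observe that the hypothesis ``$C$ bounded'' is equivalent to ``$C$ finite'', and it is this finiteness, absent in the unbounded situation of Remark~\ref{rem_m_nonabsor}, that makes the minimal absorbing set exist.

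Granting finiteness, I would next show that every orbit in $C$ is eventually periodic. Fix $\tilde{x} \in C$; since $C \supset \tilde{w}(C)$, the whole orbit $\{\tilde{w}^{\circ i}(\tilde{x})\}_{i \in \Natural}$ stays in $C$, and because $C$ is finite the pigeonhole principle yields indices $j < k$ with $\tilde{w}^{\circ j}(\tilde{x}) = \tilde{w}^{\circ k}(\tilde{x})$. Thus $\tilde{w}^{\circ j}(\tilde{x})$ is a periodic point of period dividing $k - j$, and consequently every iterate $\tilde{w}^{\circ i}(\tilde{x})$ with $i \geq j$ is periodic as well (it lies on the cycle through $\tilde{w}^{\circ j}(\tilde{x})$).

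The final step is to recognize that this is precisely the absorbing property, via the characterization already in hand. By Theorem~\ref{MinAbsorb_thm}, the set $\mathcal{M} = \bigcap_{\alpha} \Lambda_\alpha$ is exactly the set of periodic points of $\tilde{w}$ in $C$. The previous paragraph therefore supplies, for each $\tilde{x} \in C$, an integer $N := j$ such that $\tilde{w}^{\circ i}(\tilde{x}) \in \mathcal{M}$ for all $i \geq N$, which is the defining condition of an absorbing set in Definition~\ref{Def_Abs_Set}. Moreover $\mathcal{M}$ is nonempty: $C$ is nonempty and finite, so it carries at least one periodic orbit (again by pigeonhole applied to any single orbit). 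Hence $\mathcal{M}$ satisfies all clauses required of an absorbing set, and by Definition~\ref{minabsorbing_def} it is the minimal absorbing set $\mathcal{M}[\tilde{w}, C]$.

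I do not expect a genuine obstacle here, and that is the point worth highlighting in the write-up: the entire difficulty flagged in Remark~\ref{rem_m_nonabsor}, namely that $\mathcal{M}$ can be nonempty yet fail to attract, simply dissolves under boundedness, because finiteness forces eventual periodicity of every orbit and thereby forces $\mathcal{M}$ to absorb. The only clause that needs a word of care is nonemptiness, since that is the one condition the unbounded case (e.g. the $\delta$-roundoff of $w(x) = \lambda x$, $\lambda > 1$) can meet while still violating absorption; I would make sure to verify it explicitly rather than treat it as automatic.
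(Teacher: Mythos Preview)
Your argument is correct, but it proceeds along a different line from the paper's. The paper observes that a bounded $C\subset\mathcal{D}^n(\delta)$ is finite, hence has only finitely many subsets and therefore only finitely many absorbing sets; it then invokes Theorem~\ref{cor_abs_set}(c), which says a \emph{finite} intersection of absorbing sets is again absorbing, to conclude that $\mathcal{M}=\bigcap_\alpha\Lambda_\alpha$ is itself absorbing. You instead use finiteness more concretely: pigeonhole forces every orbit to be eventually periodic, and since Theorem~\ref{MinAbsorb_thm} identifies $\mathcal{M}$ with the set of periodic points, this directly verifies the absorbing condition for $\mathcal{M}$. The paper's route is shorter and stays at the level of the abstract closure properties of absorbing sets, never needing the periodic-point description; your route is more hands-on, bypasses the finite-intersection lemma entirely, and makes explicit \emph{why} absorption holds (eventual periodicity), which dovetails nicely with the contrast you draw to Remark~\ref{rem_m_nonabsor}.
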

\begin{proof}
By Theorem \ref{cor_abs_set} (b) there is at least one absorbing set in $C$. Moreover, $C$ is bounded and thus finite, so there is at most a finite number of absorbing sets in $C$. Hence, the the conclusion follows from Theorem \ref{cor_abs_set} (c). 
\end{proof}
\medskip
The next theorem shows that the inheritance of absorbing sets by extensions of the original domain, asserted in Theorem \ref{cor_abs_set} (f), also holds for minimum of absorbing sets with preserving the property of minimality.  
\medskip
\begin{thmm}\label{inherit_min}
If $\mathcal{M}$ is the minimal absorbing set for $\tilde{w}$ in $C$, and $C$ is an absorbing set for $\tilde{w}$ in $D$, then $\mathcal{M}$ is also the minimal absorbing set for $\tilde{w}$ in $D$, that is,
\begin{equation*}
    \mathcal{M}[\tilde{w}, C] = \mathcal{M}[\tilde{w}, D]. 
\end{equation*}
\begin{proof}
By Theorem \ref{MinAbsorb_thm}, $\mathcal{M}[\tilde{w}, C]$ consists of all periodic points of $\tilde{w}$ in $C$. Naturally, the points remain periodic in every superset of $C$, in this instance the set $D$. Hence, every absorbing set in $D$ has to include all periodic points in $D$ (otherwise the set would not be absorbing in $D$), and we get that the intersection of all absorbing sets in $D$ is nonempty and include $\mathcal{M}[\tilde{w}, C]$. Moreover, $\mathcal{M}[\tilde{w}, C]$ is an absorbing set in $C$, and by the assumption of the theorem, $C$ is an absorbing set in $D$, so from Theorem \ref{cor_abs_set} (f) it follows that $\mathcal{M}[\tilde{w}, C]$ is also an absorbing set in $D$, and thus $\mathcal{M}[\tilde{w}, C]$ include the intersection of all absorbing sets in $D$. Since, as we have shown, $\mathcal{M}[\tilde{w}, C]$ is also included in the intersection, this completes the proof.  
\end{proof}
\end{thmm}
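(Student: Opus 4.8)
The plan is to show that both $\mathcal{M}[\tilde{w}, C]$ and the candidate set $\mathcal{M}[\tilde{w}, D]$ coincide with the set of periodic points of $\tilde{w}$ lying in $D$, and to confirm along the way that this common set is genuinely absorbing in $D$, so that $\mathcal{M}[\tilde{w}, D]$ actually exists in the sense of Definition \ref{minabsorbing_def}. By Theorem \ref{MinAbsorb_thm} applied to $D$, the intersection $\mathcal{M}$ of all absorbing sets for $\tilde{w}$ in $D$ is precisely the set of periodic points of $\tilde{w}$ in $D$; hence the real content of the statement is the set equality $\mathcal{M}[\tilde{w}, C] = \mathcal{M}$, which I would establish by two inclusions.

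For the inclusion $\mathcal{M}[\tilde{w}, C] \subseteq \mathcal{M}$, I would note that by Theorem \ref{MinAbsorb_thm} applied to $C$ every element of $\mathcal{M}[\tilde{w}, C]$ is a periodic point of $\tilde{w}$ in $C$, and since $C \subset D$ periodicity is preserved, so each such point is periodic in $D$ as well. The same argument used in the first half of the proof of Theorem \ref{MinAbsorb_thm} then shows that any periodic point must belong to every absorbing set in $D$: a periodic orbit returns to each of its points infinitely often, so it can never be eventually trapped in a set omitting one of them. Consequently each point of $\mathcal{M}[\tilde{w}, C]$ lies in every absorbing set in $D$, hence in their intersection $\mathcal{M}$.

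For the reverse inclusion $\mathcal{M} \subseteq \mathcal{M}[\tilde{w}, C]$, the key step is to exhibit $\mathcal{M}[\tilde{w}, C]$ itself as one of the absorbing sets in $D$ over which $\mathcal{M}$ is intersected. By hypothesis $\mathcal{M}[\tilde{w}, C]$ is an absorbing set for $\tilde{w}$ in $C$, and $C$ is an absorbing set for $\tilde{w}$ in $D$; Theorem \ref{cor_abs_set}(f) applies verbatim with $\Lambda = \mathcal{M}[\tilde{w}, C]$ and yields that $\mathcal{M}[\tilde{w}, C]$ is an absorbing set for $\tilde{w}$ in $D$. Being a member of the family whose intersection defines $\mathcal{M}$, it contains that intersection, which gives $\mathcal{M} \subseteq \mathcal{M}[\tilde{w}, C]$.

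Combining the two inclusions gives $\mathcal{M}[\tilde{w}, C] = \mathcal{M}$. The point requiring the most care — and the one that keeps this statement from being a mere restatement of Theorem \ref{MinAbsorb_thm} — is that in general the intersection $\mathcal{M}$ need not be absorbing, as Remark \ref{rem_m_nonabsor} warns. Here, however, the reverse-inclusion step does double duty: it identifies $\mathcal{M}$ with $\mathcal{M}[\tilde{w}, C]$, and since the latter is absorbing in $D$ by Theorem \ref{cor_abs_set}(f), it follows that $\mathcal{M}$ itself is absorbing in $D$. Thus $\mathcal{M}$ meets the defining condition of Definition \ref{minabsorbing_def}, so $\mathcal{M}[\tilde{w}, D]$ exists and equals $\mathcal{M}[\tilde{w}, C]$. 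I expect the only delicate verifications to be that periodicity transfers cleanly from $C$ to $D$ and that the hypotheses of Theorem \ref{cor_abs_set}(f) are met exactly as stated; everything else is bookkeeping with the definitions.
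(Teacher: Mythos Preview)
Your proposal is correct and follows essentially the same route as the paper: both argue the two inclusions via Theorem~\ref{MinAbsorb_thm} (periodic points lie in every absorbing set) and Theorem~\ref{cor_abs_set}(f) (transitivity of absorption), arriving at $\mathcal{M}[\tilde{w},C]$ equal to the intersection of all absorbing sets in $D$. Your write-up is in fact slightly more careful than the paper's, since you explicitly address the existence of $\mathcal{M}[\tilde{w},D]$ in light of Remark~\ref{rem_m_nonabsor}, whereas the paper leaves this implicit.
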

Now we will take a closer look at the existence of minimal absorbing sets for $\delta$-roundoffs of contractions. We begin with the following single-map-orbit shadowing lemma, which is a special case of Lemma \ref{DIFS_shadowing_thm} proved in Sec.~\ref{Sec_DIFS}:
\medskip
\begin{lem}
Let $\tilde{w} : \mathcal{D}^n(\delta) \rightarrow \mathcal{D}^n(\delta)$ be the $\delta$-roundoff of a contraction $w : \Real^n \rightarrow \Real^n$ with respect to the metric $d$. Let $\{\tilde{w}^{\circ i}(\tilde{x})\}_{i=1}^{\infty}$ and $\{w^{\circ i}(\tilde{x})\}_{i=1}^{\infty}$ be orbits of an arbitrary point $\tilde{x} \in \mathcal{D}^n(\delta)$. Then
\begin{equation}\label{ineqconv2}
    d(\tilde{w}^{\circ i}(\tilde{x}), w^{\circ i}(\tilde{x})) \leq \theta(1-\lambda)^{-1}, \; \forall i \in \mathbb{N}, 
\end{equation}
where $\lambda \in [0, 1)$ is the contractivity factor of $w$.
\end{lem}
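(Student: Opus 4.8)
The plan is to compare the discrete orbit $a_i := \tilde{w}^{\circ i}(\tilde{x})$ with the genuine orbit $b_i := w^{\circ i}(\tilde{x})$ of the underlying contraction (with $a_0 = b_0 = \tilde{x}$), both launched from the same grid point $\tilde{x}$, and to control the deviation $e_i := d(a_i, b_i)$ by a one-step recurrence. The crucial observation is that a single application of $\tilde{w}$ differs from a single application of $w$ only by a rounding, and that this rounding error is uniformly bounded by $\theta$. Indeed, by Definition~\ref{roundoff_def} we have $a_{i+1} = \tilde{w}(a_i) = \widetilde{w(a_i)}$, so $a_{i+1}$ is the center of the $\delta$-cube containing $w(a_i)$; since a cube is centrally symmetric about its center, the fact established before the first definition---that the farthest point of a centrally symmetric set from its center lies at exactly half the diameter---gives $d(w(a_i), a_{i+1}) \leq \theta$.

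With this in hand, the key step is to derive the recurrence. Writing $b_{i+1} = w(b_i)$ and inserting $w(a_i)$ via the triangle inequality,
\[
e_{i+1} = d\bigl(\widetilde{w(a_i)},\, w(b_i)\bigr) \leq d\bigl(\widetilde{w(a_i)},\, w(a_i)\bigr) + d\bigl(w(a_i),\, w(b_i)\bigr) \leq \theta + \lambda\, e_i,
\]
where the last inequality combines the rounding bound above with the $\lambda$-contractivity of $w$. Because both orbits are launched from the already-rounded point $\tilde{x}$, we also have the initial condition $e_0 = 0$.

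Finally, I would solve the recurrence $e_{i+1} \leq \theta + \lambda e_i$ with $e_0 = 0$ by induction, unfolding it into the partial geometric sum
\[
e_i \leq \theta \sum_{j=0}^{i-1} \lambda^j = \theta\,\frac{1 - \lambda^i}{1-\lambda} \leq \theta(1-\lambda)^{-1},
\]
the last inequality holding for every $i$ since $\lambda \in [0, 1)$. This delivers the claimed bound uniformly in $i$.

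I do not anticipate a genuine obstacle, as the argument is a discrete shadowing (Gr\"onwall-type) estimate. The only two points requiring care are, first, the justification of the per-step rounding bound $d(w(a_i), a_{i+1}) \leq \theta$, which must invoke that the $\delta$-cube is centrally symmetric about its center $a_{i+1}$ so that $\theta = \diam_d(C_\delta)/2$ is precisely the correct constant; and second, the observation that starting the continuous orbit from the rounded point $\tilde{x}$ forces $e_0 = 0$, which is exactly what keeps the final bound free of any additive initial-error term.
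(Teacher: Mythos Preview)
Your proof is correct and follows essentially the same route as the paper, which defers this lemma to the more general shadowing Lemma~\ref{DIFS_shadowing_thm}: both arguments use the single-step rounding bound $d(w(a_i),\tilde w(a_i))\le\theta$ together with the contractivity of $w$ to obtain the recurrence $e_{i+1}\le\theta+\lambda e_i$, and then finish by induction. The only cosmetic difference is that the paper verifies directly that the invariant $e_i\le\theta(1-\lambda)^{-1}$ is preserved under the recurrence, whereas you unfold it into the partial geometric sum; the content is the same.
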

\medskip
The upper bound of the form $\theta (1-\lambda)^{-1}$ is symptomatic of $\delta$-discretization, and we will encounter it many times in this paper. On the basis of Eq.~\eqref{ineqconv2} we see that the orbit $\{\tilde{w}^{\circ i}(\tilde{x}))\}$ is shadowed by the orbit $\{w^{\circ i}(\tilde{x})\}$, so we expect that, in the limit, the fixed point $x_f = w(x_f)$ will be imitated by a bounded orbit in $\mathcal{D}^n(\delta)$.  
\medskip
\begin{thmm} \label{thm_alpha_eps}
Let $\tilde{w} : \mathcal{D}^n(\delta) \rightarrow \mathcal{D}^n(\delta)$ be the $\delta$-roundoff of a contraction $w : \Real^n \rightarrow \Real^n$ with respect to the metric $d$. Let $x_f \in \Real^n$ be the fixed point of $w$, and $\lambda \in [0, 1)$ be the map's contractivity factor. Let $\Lambda(x_f, r) := \overline{B}(x_f, r) \cap \mathcal{D}^n(\delta)$, where $\overline{B}(x_f, r)$ is the closed ball in $(\Real^n, d)$, centred at $x_f$ and with radius $r$. Then $\Lambda(x_f, r_0)$ with $r_0 = \theta(1-\lambda)^{-1}$, that is,
\begin{equation}\label{alpha_eps}
    \Lambda(x_f, r_0) = \{ \tilde{y} \in \mathcal{D}^n(\delta) : d(\tilde{y}, x_f) \leq \theta(1-\lambda)^{-1}\}, 
\end{equation}
is an absorbing set for $\tilde{w}$. Moreover, $\tilde{w}$ maps $\Lambda(x_f, r_0)$ into itself,
\begin{equation}
    \tilde{w}(\Lambda(x_f, r_0)) \subset \Lambda(x_f, r_0). 
\end{equation}
\end{thmm}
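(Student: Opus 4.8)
The plan is to prove the self-mapping property $\tilde{w}(\Lambda(x_f, r_0)) \subset \Lambda(x_f, r_0)$ first and then bootstrap it into the absorbing property, the single crucial ingredient being a one-step distance estimate. First I would establish, for an arbitrary $\tilde{y} \in \mathcal{D}^n(\delta)$, the inequality
\[
d(\tilde{w}(\tilde{y}), x_f) \leq \theta + \lambda\, d(\tilde{y}, x_f).
\]
This follows by writing $\tilde{w}(\tilde{y}) = \widetilde{w(\tilde{y})}$ and inserting $w(\tilde{y})$ through the triangle inequality: the rounding term $d(\widetilde{w(\tilde{y})}, w(\tilde{y}))$ is at most $\theta$, since a $\delta$-roundoff always lands within half a cube-diameter of its argument, while $d(w(\tilde{y}), x_f) = d(w(\tilde{y}), w(x_f)) \leq \lambda\, d(\tilde{y}, x_f)$ by contractivity together with $w(x_f) = x_f$.

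For the invariance I would observe that $r_0 = \theta(1-\lambda)^{-1}$ is exactly the fixed point of the affine map $t \mapsto \theta + \lambda t$, i.e. $r_0 = \theta + \lambda r_0$. Substituting $d(\tilde{y}, x_f) \leq r_0$ into the one-step inequality then gives $d(\tilde{w}(\tilde{y}), x_f) \leq \theta + \lambda r_0 = r_0$, so $\tilde{w}(\tilde{y}) \in \Lambda(x_f, r_0)$, which establishes $\tilde{w}(\Lambda(x_f, r_0)) \subset \Lambda(x_f, r_0)$ and incidentally verifies that $\Lambda(x_f, r_0)$ is a legitimate candidate domain ($C \supset \tilde{w}(C)$).

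The delicate part is the absorbing property, and this is where I expect the main obstacle. Setting $a_i := d(\tilde{w}^{\circ i}(\tilde{x}), x_f)$ and iterating the one-step inequality yields $a_{i+1} - r_0 \leq \lambda(a_i - r_0)$, hence $a_i - r_0 \leq \lambda^i (a_0 - r_0)$. This only forces $a_i \to r_0$ \emph{from above}, which in a continuous setting would not guarantee membership in the closed ball $\overline{B}(x_f, r_0)$ at any finite step. The resolution is to exploit discreteness. The same one-step bound shows the orbit never escapes $\overline{B}(x_f, \max(a_0, r_0))$, whose intersection with $\mathcal{D}^n(\delta)$ is finite; hence the orbit is eventually periodic. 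On the terminal cycle, let $M$ be the maximal distance to $x_f$, attained at an orbit point which, being on a cycle, is the image of another cycle point; the one-step inequality then gives $M \leq \theta + \lambda M$, i.e. $M \leq r_0$. Thus the whole terminal cycle lies in $\Lambda(x_f, r_0)$, which supplies the required $N$ and proves that $\Lambda(x_f, r_0)$ is absorbing.

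An alternative route to the same conclusion, should a shorter argument be preferred, is to note that $\{a_i\}$ takes only finitely many values (the distances from $x_f$ to the finitely many grid points within $\overline{B}(x_f, \max(a_0, r_0))$) yet is bounded above by a sequence decreasing to $r_0$; a sequence in a finite set cannot stay strictly above $r_0$ while being squeezed toward it, so some iterate satisfies $a_N \leq r_0$, and invariance then keeps all later iterates inside $\Lambda(x_f, r_0)$. Either way, the discreteness of $\mathcal{D}^n(\delta)$ is what converts the merely asymptotic estimate into genuine absorption.
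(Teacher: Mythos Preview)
Your proof is correct but follows a different route from the paper. The paper first invokes the orbit-shadowing lemma $d(\tilde{w}^{\circ i}(\tilde{x}), w^{\circ i}(\tilde{x})) \leq \theta(1-\lambda)^{-1}$ and combines it with $w^{\circ i}(\tilde{x}) \to x_f$ to show that $\Lambda(x_f, r_0 + \varepsilon)$ is absorbing for every $\varepsilon > 0$; it then eliminates $\varepsilon$ by a discreteness ``gap'' argument (the complement of $\Lambda(x_f, r_0)$ in $\mathcal{D}^n(\delta)$ has a strictly positive minimal distance to $x_f$ exceeding $r_0$), and only afterwards proves the self-mapping property via the one-step estimate. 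You instead take the one-step estimate $d(\tilde{w}(\tilde{y}), x_f) \leq \theta + \lambda\, d(\tilde{y}, x_f)$ as the sole engine: invariance follows immediately from $r_0$ being the fixed point of $t \mapsto \theta + \lambda t$, and absorption is obtained by noting that the orbit lives in a finite grid-ball and is therefore eventually periodic, with the maximum distance $M$ along the terminal cycle satisfying $M \leq \theta + \lambda M$, hence $M \leq r_0$. Your approach is more self-contained (it does not require the shadowing lemma as a separate ingredient) and makes the role of discreteness explicit through periodicity rather than through a gap in the set of distances; the paper's approach, on the other hand, foregrounds the shadowing phenomenon that is a recurring theme throughout the work. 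Both discreteness arguments---your periodicity/finite-range squeeze and the paper's minimum-distance gap---are essentially equivalent ways of converting the asymptotic bound $a_i \to r_0^+$ into membership at a finite step.
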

\begin{proof}
Let $\tilde{x}$ be any point of $\mathcal{D}^n(\delta)$. The mapping $w$ is a contraction on $(\Real^n, d)$, so $\lim_{i \rightarrow \infty} w^{\circ i}(\tilde{w}) = x_f$ or equivalently
\begin{equation*}
    \forall \varepsilon > 0, \exists N \in \Natural, \forall i \geq N, d(w^{\circ i}(\tilde{x}), x_f) < \varepsilon.
\end{equation*}
As a consequence, for a given $\varepsilon > 0$, there exists $N \in \Natural$ such that for any $i \geq N$ we get that
\begin{equation*}
\begin{split}
    d(\tilde{w}^{\circ i}(\tilde{x}), x_f) \leq d(\tilde{w}^{\circ i}(\tilde{x}), w^{\circ i}(\tilde{x})) + d(w^{\circ i}(\tilde{x}), x_f) < \theta(1-\lambda)^{-1} + \varepsilon
\end{split}
\end{equation*}
on the basis of inequality \eqref{ineqconv2}. Hence, for all $i \geq N$, $\tilde{w}^{\circ i}(\tilde{x}) \in \Lambda(x_f, r_0 +\varepsilon)$, and because $\tilde{x}$ is any point of $\mathcal{D}^n(\delta)$, we get that, for every $\varepsilon > 0$, $\Lambda(x_f, r_0+\varepsilon)$ is an absorbing set for $\tilde{w}$.

Now we prove that $\Lambda(x_f, r_0 + \varepsilon)$ also owns the absorbing property for $\varepsilon = 0$. Let $\Lambda^C(x_f, r_0) = \mathcal{D}^n(\delta) \setminus  \Lambda(x_f, r_0)$. Since $\Lambda^C(x_f, r_0)$ is countable, the minimum $\varepsilon = \min \{ d(\tilde{y}, x_f) : \tilde{y} \in \Lambda^C(x_f, r_0) \}$ exists and $\varepsilon > r_0$. Therefore, $\Lambda(x_f, r_0 + \varepsilon/2)$ does not include any point from $\Lambda^C(x_f, r_0)$, and thus $\Lambda(x_f, r_0 + \varepsilon/2) = \Lambda(x_f, r_0)$. 

The last thing to we show is that $\tilde{w}$ maps $\Lambda(x_f, r_0)$ into itself. Let $\tilde{y} \in \Lambda(x_f, r_0)$. We have 
\begin{equation*}
\begin{split}
    d(\tilde{w}(\tilde{y}), x_f) \leq d(\tilde{w}(\tilde{y}), w(\tilde{y})) + 
 d(w(\tilde{y}), x_f) 
    \leq \theta + \lambda d(\tilde{y}, x_f) 
    \leq \theta \big(1 + \lambda(1-\lambda)^{-1}\big)
    = \theta (1-\lambda)^{-1} 
\end{split}
\end{equation*}
because $d(\tilde{w}(\tilde{y}), w(\tilde{y})) \leq \theta$ by the definition of a $\delta$-roundoff of a mapping (Def.~\ref{roundoff_def}). Therefore, $\tilde{w}(\tilde{y}) \in \Lambda(x_f, r_0)$, which completes the proof.  
\end{proof}

\begin{corollary}\label{contraction_minimumset}
If $\tilde{w} : \mathcal{D}^n(\delta)\rightarrow \mathcal{D}^n(\delta)$ is the roundoff of a contraction $w : \Real^n \rightarrow \Real^n$, then the minimal absorbing set  $\mathcal{M}[\tilde{w}]$ exists. Morover, $\mathcal{M}[\tilde{w}]$ is finite and its cardinality is bounded from above by the cardinality of $\Lambda(x_f, r_0)$.
\end{corollary}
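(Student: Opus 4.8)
The plan is to reduce the statement to the results already established for bounded domains, using $\Lambda(x_f, r_0)$ as an intermediate domain that is simultaneously absorbing in the whole grid and small enough for Theorem~\ref{bounded_min} to apply. First I would set $C := \Lambda(x_f, r_0)$ with $r_0 = \theta(1-\lambda)^{-1}$. Theorem~\ref{thm_alpha_eps} already supplies the two facts about this set that the earlier machinery needs: it is an absorbing set for $\tilde{w}$ in $\mathcal{D}^n(\delta)$, and it satisfies $\tilde{w}(C) \subset C$.

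Next I would observe that $C$ is finite. Since $C \subset \overline{B}(x_f, r_0)$, it is bounded in $(\Real^n, d)$, and because $\mathcal{D}^n(\delta)$ is a uniform grid, any bounded subset of it contains only finitely many points. With $C$ bounded and $\tilde{w}(C) \subset C$, Theorem~\ref{bounded_min} applies directly and guarantees that the minimal absorbing set $\mathcal{M}[\tilde{w}, C]$ exists.

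Then I would promote this minimal absorbing set from the restricted domain $C$ to the full space via Theorem~\ref{inherit_min}, taking the roles $C := \Lambda(x_f, r_0)$ and $D := \mathcal{D}^n(\delta)$. Since $\mathcal{M}[\tilde{w}, C]$ is the minimal absorbing set in $C$ and, by Theorem~\ref{thm_alpha_eps}, $C$ is itself an absorbing set in $\mathcal{D}^n(\delta)$, the hypotheses of Theorem~\ref{inherit_min} are met, yielding $\mathcal{M}[\tilde{w}, C] = \mathcal{M}[\tilde{w}, \mathcal{D}^n(\delta)] = \mathcal{M}[\tilde{w}]$, which establishes the existence of the minimal absorbing set for $\tilde{w}$.

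Finally, the cardinality bound is immediate from the inclusions $\mathcal{M}[\tilde{w}] = \mathcal{M}[\tilde{w}, C] \subset C = \Lambda(x_f, r_0)$: since $C$ is finite, $\mathcal{M}[\tilde{w}]$ is finite with $|\mathcal{M}[\tilde{w}]| \leq |\Lambda(x_f, r_0)|$. The only point requiring any care — and the nearest thing to an obstacle — is the verification that $\Lambda(x_f, r_0)$ is finite, i.e.\ that a closed metric ball of finite radius meets the grid in finitely many points; this rests on the uniform spacing of $\mathcal{D}^n(\delta)$ together with the metric being induced by a norm, and is otherwise routine. Everything else is a straightforward concatenation of Theorems~\ref{thm_alpha_eps},~\ref{bounded_min}, and~\ref{inherit_min}.
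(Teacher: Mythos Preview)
Your proposal is correct and follows essentially the same route as the paper's own proof: use Theorem~\ref{thm_alpha_eps} to see that $\Lambda(x_f,r_0)$ is bounded, absorbing, and $\tilde{w}$-invariant, apply Theorem~\ref{bounded_min} to obtain $\mathcal{M}[\tilde{w},\Lambda(x_f,r_0)]$, and then invoke Theorem~\ref{inherit_min} to identify it with $\mathcal{M}[\tilde{w}]$. Your version is in fact slightly more complete, since you spell out why $\Lambda(x_f,r_0)$ is finite and make the cardinality bound explicit via the inclusion $\mathcal{M}[\tilde{w}]\subset\Lambda(x_f,r_0)$.
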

\begin{proof}
On the basis of Theorem \ref{thm_alpha_eps}, $\Lambda(x_f, r_0)$ is bounded and $\tilde{w}$ maps it into itself, so from Theorem \ref{bounded_min} there exists $\mathcal{M}[\tilde{w}, \Lambda(x_f, r_0)]$. But, by Theorem \ref{thm_alpha_eps}, $\Lambda(x_f, r_0)$ is an absorbing set for $\tilde{w}$ in $\mathcal{D}^n(\delta)$, and thus $\mathcal{M}[\tilde{w}]=\mathcal{M}[\tilde{w}, \Lambda(x_f, r_0)]$ by Theorem \ref{inherit_min}. \end{proof}

\begin{figure}[t]
  \centering
  \includegraphics[width=0.8\textwidth]{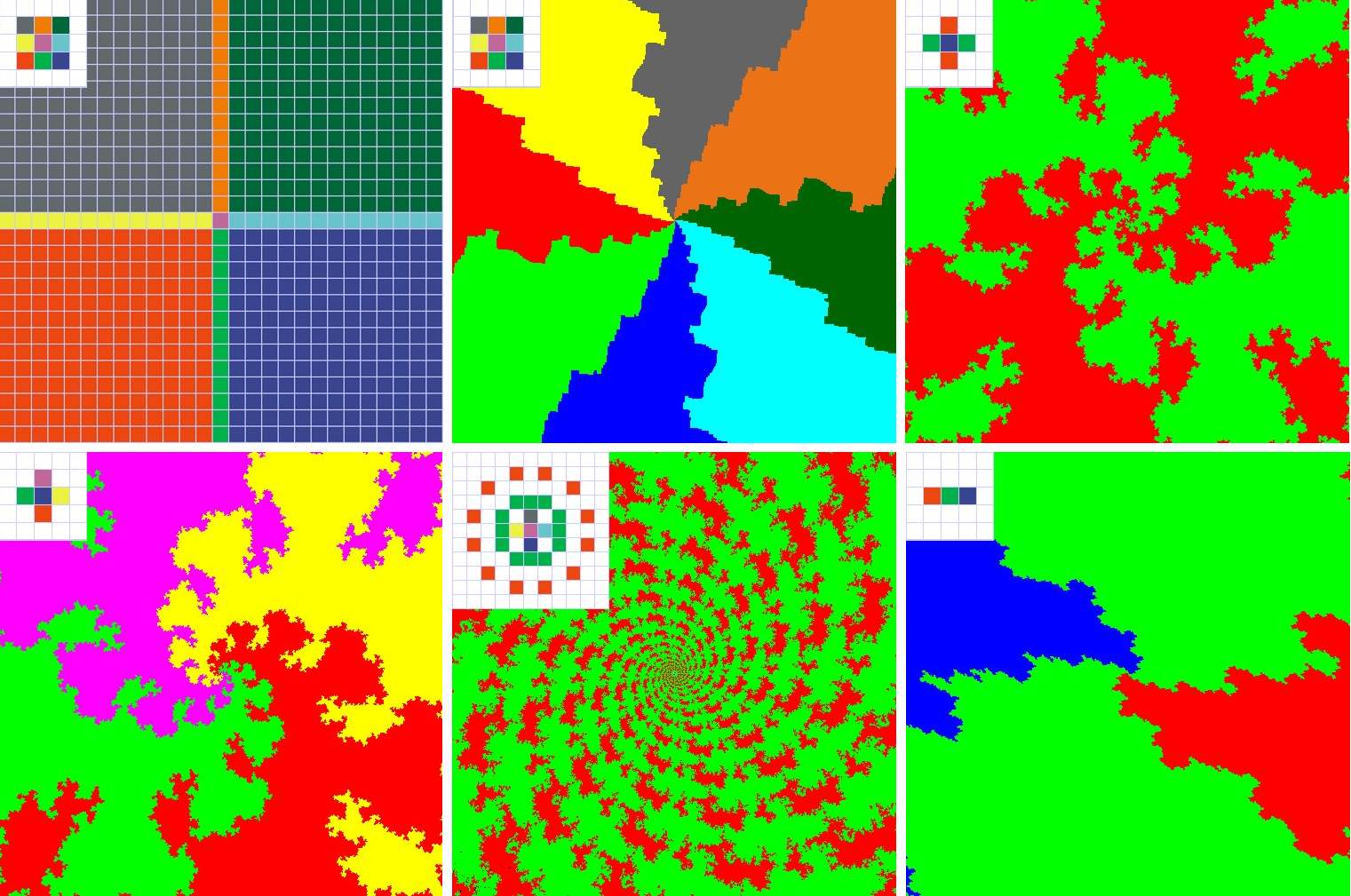}
  \caption{Examples of minimal absorbing sets and their basins of attractions generated by $\delta$-roundoffs of two-dimensional linear contractions. From left to right, top to bottom, the first five pictures are derived from similarities with scaling factors and rotations respectively: $0.6$ and $0^{\circ}$, $0.6$ and $5^{\circ}$,  $0.6$ and $150^{\circ}$, $0.6$ and $30^{\circ}$, $0.9$ and $30^{\circ}$. The last picture is derived from a linear mapping specified by the matrix $\big[\begin{smallmatrix}
  0.5 & 0.3\\
  -0.1 & 0.4
\end{smallmatrix}\big]$. }
  \label{fig:1}
\end{figure}

In Fig.~\ref{fig:1} we present some examples of minimal absorbing sets and their basins of attractions generated by $\delta$-roundoffs of two-dimensional affine contractions.

\section{...more often than rarely}\label{sect_more_often}
In this section we examine the probability that discretization of a contraction mapping $w$ in $(\Real^n, d)$ results in a mapping $\tilde{w}$ that is deprived of the contractive property in $(\mathcal{D}^n(\delta), d)$. We base our study on the observation that a necessary (but not sufficient) condition for a mapping $\tilde{w}$ to be a contraction in $(\mathcal{D}^n(\delta), d)$ is that the minimal absorbing set for $\tilde{w}$ has to consist of only one point. On that basis, the probability that discretization retains the contractivity of a mapping is bounded from above by the probability for the resulting $\delta$-roundoff to have a minimal absorbing set being a singleton. Since the probability in question is related to the cardinality of minimal absorbing sets, it is critical how the cardinality relates to the value of the parameter $\delta >0$ that controls the accuracy of discretization. In the sequel we will show that in the case of affine mappings in $(\Real^n, d_E)$, $d_E$ the Euclidean metric, the probability $p$ that a $\delta$-roundoff of an affine contraction has a non-singleton minimal absorbing set is positive and remains constant for all values of $\delta$. Therefore, given an affine contraction, the resulting minimal absorbing set for a certain $\delta_1$ may be as much a singleton as a non-singleton with probabilities $1-p$ and $p$, respectively, and independently of any other value $\delta_2$ for which the minimal absorbing set is a singleton or a non-singleton with the same probabilities. As a result, not only does an increase of the discretization precision not make the minimal absorbing set head for a singleton, but also it may shift a singleton minimal absorbing set to a non-singleton one. At the end of this section we present results of some numerical experiments conducted for two-dimensional affine contractions, among others we give numerical evidence that in this case the probability $p$ for the occurrence of a non-singleton minimal absorbing set is higher than $1-p$, the probability of the occurrence of a singleton one.   

We begin with a theorem concerning a sufficient condition for a general contraction (i.e., not necessarily being affine) in $(\Real^n, d_E)$ to have a singleton minimal absorbing set in a special case in which the fixed-point of a contraction coincides with a point in $\mathcal{D}^n(\delta)$ (i.e., the center of a $\delta$-cube). In \cite{Peru93} Peruggia demonstrated a simple proof for the case of contractions in $(\Real^2, d_E)$ (cf. Lemma 4.26, pp.~84--85). The generalization of this result to $n$-dimensions requires more effort. First we need the following two lemmas:    
\medskip
\begin{lem}\label{lem_6}
Let $\tilde{x}, \tilde{o} \in \mathcal{D}^n(\delta)$ such that $d_E(\tilde{x}, \tilde{o}) < \delta \sqrt{k}$, $0 < k \leq n$, where $d_E$ denotes the Euclidean metric on $\Real^n$. Then $\tilde{x}$ belongs to at least one of the affine subspaces $\tilde{o} + V^{(k-1)} = \{ \tilde{o} + v : v \in V^{(k-1)}\}$, where $V^{(k-1)}$ is a linear subspace spanned by $k-1$ different vectors from the standard basis of $\Real^n$ (i.e., each basis vector has a single nonzero entry with value $1$). Moreover, $d_E(\tilde{x}, \tilde{o}) \leq \delta \sqrt{k-1}$.
\end{lem}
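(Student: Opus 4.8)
The plan is to exploit the fact that both $\tilde{x}$ and $\tilde{o}$ are lattice points of $\mathcal{D}^n(\delta)$, so their difference has a rigid arithmetic structure. Writing $\tilde{x} = \delta[a_1,\dots,a_n]$ and $\tilde{o} = \delta[b_1,\dots,b_n]$ with all $a_i, b_i \in \mathbb{Z}$, the coordinates of $\tilde{x} - \tilde{o}$ are $\delta c_i$ with $c_i := a_i - b_i \in \mathbb{Z}$. Consequently the squared Euclidean distance takes the form
\begin{equation*}
d_E(\tilde{x}, \tilde{o})^2 = \delta^2 \sum_{i=1}^n c_i^2,
\end{equation*}
where every summand $c_i^2$ is a non-negative integer. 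Let $e_1,\dots,e_n$ denote the standard basis vectors of $\Real^n$.

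First I would translate the hypothesis into a statement about this integer sum. The assumption $d_E(\tilde{x}, \tilde{o}) < \delta\sqrt{k}$ is equivalent to $\sum_{i=1}^n c_i^2 < k$. Since $k$ is a positive integer (it indexes a number of basis vectors) and $\sum_{i=1}^n c_i^2$ is itself a non-negative integer, the strict inequality upgrades to $\sum_{i=1}^n c_i^2 \leq k-1$. This single step---turning a strict bound against an integer threshold into the sharp bound $k-1$---is the heart of the argument, and it immediately yields the ``moreover'' claim: $d_E(\tilde{x}, \tilde{o})^2 = \delta^2\sum_{i=1}^n c_i^2 \leq \delta^2(k-1)$, hence $d_E(\tilde{x}, \tilde{o}) \leq \delta\sqrt{k-1}$.

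Next I would count nonzero coordinates. For each index $i$ with $c_i \neq 0$ we have $c_i^2 \geq 1$, so the number of nonzero $c_i$ is at most $\sum_{i=1}^n c_i^2 \leq k-1$. Letting $S \subseteq \{1,\dots,n\}$ be the set of indices where $c_i \neq 0$, we obtain $|S| \leq k-1$, and $\tilde{x} - \tilde{o}$ is supported on $S$, i.e.\ it lies in the coordinate subspace spanned by $\{e_i : i \in S\}$.

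Finally I would assemble the affine-subspace conclusion. If $|S| < k-1$, I simply enlarge $S$ to a set $S'$ of exactly $k-1$ indices, padding with arbitrary unused coordinate directions, and take $V^{(k-1)}$ to be the linear subspace spanned by $\{e_i : i \in S'\}$. Then $\tilde{x} - \tilde{o} \in V^{(k-1)}$, so $\tilde{x} \in \tilde{o} + V^{(k-1)}$, which is precisely one of the affine subspaces in the statement. There is no serious obstacle here; the only points requiring care are the integrality upgrade described above and the bookkeeping of padding the index set out to size exactly $k-1$.
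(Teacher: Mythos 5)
Your proposal is correct and follows essentially the same route as the paper: reduce to the integer vector $(\tilde{x}-\tilde{o})/\delta \in \mathbb{Z}^n$, use integrality of its squared Euclidean norm to upgrade the strict bound $\sum_i c_i^2 < k$ to $\sum_i c_i^2 \leq k-1$, and then count nonzero coordinates to place the difference in a coordinate subspace of dimension at most $k-1$. The only cosmetic difference is that you make the padding of the index set to exactly $k-1$ explicit, whereas the paper phrases this as membership in ``at least one'' of the subspaces $V^{(k-1)}$.
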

\begin{proof}
Let $\tilde{x}, \tilde{o} \in \mathcal{D}^n(\delta)$ and $d_E(\tilde{x}, \tilde{o}) < \delta \sqrt{k}$, $k \leq n$. Then, because $\delta >0$, 
\begin{equation*}
    \delta\sqrt{k} > \|\tilde{x} - \tilde{o} \|_E = \delta\|\tilde{x}/\delta - \tilde{o}/\delta \|_E 
\end{equation*}
and hence
\begin{equation*}
\|\tilde{x}/\delta - \tilde{o}/\delta \|_E^2 < k.   
\end{equation*}
But $\tilde{x}/\delta - \tilde{o}/\delta \in \mathbb{Z}^n$, so $\|\tilde{x}/\delta - \tilde{o}/\delta \|_E^2 \in \Natural_0$, and therefore the above inequality implies that 
\begin{equation}\label{sqnorm}
    \|\tilde{x}/\delta - \tilde{o}/\delta \|_E^2 \leq k-1.
\end{equation}
It follows that $\|\tilde{x}/\delta - \tilde{o}/\delta \|_E \leq \sqrt{k-1}$ and thus
\begin{equation*}
    \delta\sqrt{k-1} \geq \|\tilde{x} - \tilde{o} \|_E = d_E(\tilde{x}, \tilde{o}), 
\end{equation*}
so we have proved the second part of the theorem. Now, from inequality \eqref{sqnorm} and $\tilde{x}/\delta - \tilde{o}/\delta \in \mathbb{Z}^n$ we conclude that the vector $\tilde{x}/\delta - \tilde{o}/\delta$ has at most $k-1$ nonzero entries, and so does the vector $\tilde{x} - \tilde{o} \in \mathcal{D}^n(\delta)$. Therefore, $\tilde{x} - \tilde{o}$ belongs to at least one of the linear subspaces $V^{(k-1)}$ of $\Real^n$ defined in the theorem. From this we immediately get that $\tilde{x}$ belongs to at least one of the affine subspaces $\tilde{o} + V^{(k-1)}$.
\end{proof}
\medskip
\begin{lem}\label{lem_7}
Let $V^{(k)}$ be a linear subspace of $\Real^n$, spanned by $0 < k < n$ different vectors from the standard basis of $\Real^n$. Let $\tilde{x}, \tilde{o} \in \mathcal{D}^n(\delta)$ such that $\tilde{x} \in \tilde{o} + V^{(k)}$. Then for any $x \in C_{\delta}(\tilde{x}/\delta)$, where $C_{\delta}(\tilde{x}/\delta)$ is a $\delta$-cube in $\mathcal{G}^n(\delta)$, we have
\begin{equation*}
    d_E(x, \tilde{o}) \geq d_E(\tilde{x}, \tilde{o}) - \delta \frac{\sqrt{k}}{2}.
\end{equation*}
\end{lem}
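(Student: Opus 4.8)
The plan is to exploit the fact that $\tilde{x}$ and $\tilde{o}$ agree on every coordinate outside the span $V^{(k)}$, so that the entire displacement between these two grid points lives in the $k$ coordinates indexed by the chosen standard basis vectors, and the perturbation in the remaining coordinates can only help. Let $S = \{i_1, \dots, i_k\} \subseteq \{1, \dots, n\}$ be the index set of the standard basis vectors spanning $V^{(k)}$. Since $\tilde{x} - \tilde{o} \in V^{(k)}$, we have $\tilde{x}_j = \tilde{o}_j$ for every $j \notin S$.

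First I would split the squared Euclidean distance $d_E(x, \tilde{o})^2 = \sum_{j=1}^n (x_j - \tilde{o}_j)^2$ into the sum over $j \in S$ and the sum over $j \notin S$. The terms with $j \notin S$ are nonnegative, so discarding them yields the lower bound $d_E(x, \tilde{o})^2 \ge \sum_{j \in S}(x_j - \tilde{o}_j)^2$; denote the right-hand side by $\|a - c\|^2$, where $a, c \in \Real^k$ are the restrictions of $x$ and $\tilde{o}$ to the coordinates in $S$ and $\|\cdot\|$ is the Euclidean norm on $\Real^k$. Writing $b \in \Real^k$ for the restriction of $\tilde{x}$, the agreement $\tilde{x}_j = \tilde{o}_j$ on the complement of $S$ gives $\|b - c\|^2 = \sum_{j \in S}(\tilde{x}_j - \tilde{o}_j)^2 = \sum_{j=1}^n (\tilde{x}_j - \tilde{o}_j)^2 = d_E(\tilde{x}, \tilde{o})^2$; that is, the full grid-point distance is already captured by the $S$-coordinates alone.

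Next I would apply the reverse triangle inequality in $\Real^k$, namely $\|a - c\| \ge \|b - c\| - \|a - b\|$. Since $x \in C_\delta(\tilde{x}/\delta)$ and this cube is centred at $\tilde{x}$ with side $\delta$, each coordinate satisfies $|x_j - \tilde{x}_j| \le \delta/2$, so the perturbation confined to the $k$ coordinates in $S$ obeys $\|a - b\| = \sqrt{\sum_{j\in S}(x_j - \tilde{x}_j)^2} \le \sqrt{k}\,\delta/2$. Chaining the three bounds then gives $d_E(x, \tilde{o}) \ge \|a - c\| \ge \|b - c\| - \|a-b\| \ge d_E(\tilde{x}, \tilde{o}) - \delta\sqrt{k}/2$, which is precisely the claim.

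The main obstacle is really just resisting the temptation to bound $d_E(x, \tilde{x})$ by the full $\sqrt{n}\,\delta/2$ through a single reverse triangle inequality in $\Real^n$; that would produce the weaker constant $\sqrt{n}/2$ rather than $\sqrt{k}/2$. The essential point is that the hypothesis $\tilde{x} \in \tilde{o} + V^{(k)}$ localizes the displacement between the two grid points to exactly $k$ coordinates, so the perturbation in the remaining $n-k$ coordinates can only push $x$ farther from $\tilde{o}$ and must therefore be dropped outright (not merely estimated) in order to obtain the sharp constant.
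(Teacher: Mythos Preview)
Your proof is correct and follows essentially the same strategy as the paper's: both arguments discard the contribution of the $n-k$ coordinates outside $S$ (you by dropping nonnegative terms from the squared distance, the paper by orthogonally projecting $x$ onto $\tilde{o}+V^{(k)}$ and invoking Pythagoras, which is the same operation in geometric language) and then apply the reverse triangle inequality within the $k$-dimensional slice, where the perturbation is bounded by $\delta\sqrt{k}/2$.
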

\begin{proof}
Let $\tilde{x}, \tilde{o} \in \mathcal{D}^n(\delta)$ and let $C_\delta(\tilde{x}/\delta)$ be a $\delta$-cube in $\mathcal{G}^n(\delta)$ such that $\tilde{x} \in \tilde{o} + V^{(k)}$. Denote $C^{(k)}(\tilde{x}/\delta) := C_\delta(\tilde{x}/\delta) \cap (\tilde{o} + V^{(k)})$. By definition, $C_\delta(\tilde{x}/\delta)$ consists of points $x = \tilde{x} + \delta \varepsilon$, where $\varepsilon = [\varepsilon_1, \dots, \varepsilon_n]$, $\varepsilon_i \in [-1/2, 1/2)$, or equivalently $\delta \varepsilon \in C_\delta(\mathbf{0})$, where $\mathbf{0}$ is the zero vector of $\Real^n$. Now, by our assumption $\tilde{x} \in \tilde{o} + V^{(k)}$ and by definition $C^{(k)}(\tilde{x}/\delta) \subset \tilde{o} + V^{(k)}$, so $C^{(k)}(\tilde{x}/\delta)$ consists of points $x' = \tilde{x} + \delta \varepsilon'$, where $\delta \varepsilon' \in V^{(k)}$ and, moreover, $\delta \varepsilon' \in C_\delta(\mathbf{0})$, because $C^{(k)}(\tilde{x}/\delta) \subset C_\delta(\tilde{x}/\delta)$. This yields $\varepsilon' \in C_1(\mathbf{0})$ and, at the same time, $\varepsilon' \in V^{(k)}$, because $V^{(k)}$ is a linear space. Therefore, $\varepsilon' \in C_1(\mathbf{0}) \cup V^{(k)}$. Since $V^{(k)}$ is spanned by $k$ vectors from the standard basis of $\Real^n$, we conclude that the Euclidean norm $\|\varepsilon'\|_E \leq \sqrt{k (\tfrac{1}{2})^2} = \sqrt{k}/2$. It follows that $d_E(\tilde{x}, x') \leq \delta \sqrt{k}/2$, and by the triangle inequality we get that for any $x' \in C^{(k)}(\tilde{x}/\delta)$ and any $y \in \Real^n$
\begin{equation*}
    d_E(x', y) \geq d_E(\tilde{x}, y) - \delta\sqrt{k}/2.
\end{equation*}
Now, for any $x \in C_\delta(\tilde{x}/\delta)$ there exists $x' \in C^{(k)}(\tilde{x}/\delta)$ which is the orthogonal projection of $x$ onto $\tilde{o} + V^{(k)}$. Therefore, for any $y \in \tilde{o} + V^{(k)}$, from the Pythagorean theorem we immediately get that $d_E(x, y) \geq d_E(x', y)$. Since $\tilde{o} \in \tilde{o} + V^{(k)}$, this completes the proof.
\end{proof}
\medskip
\begin{thmm}\label{contr_half_thm}
Let $\mathcal{G}^n(\delta)$ be the $\delta$-grid on $\Real^n$, and let $w: \Real^n \rightarrow \Real^n$ be a contraction with respect to the Euclidean metric $d_E$, with the contractivity factor $\lambda < \tfrac{1}{2}$ and the fixed point $x_f = \tilde{o} \in \mathcal{D}^n(\delta)$. Then the minimal absorbing set $\mathcal{M}[\tilde{w}] = \{\tilde{o}\}$. 
\end{thmm}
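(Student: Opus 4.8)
\section*{Proof proposal}

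The plan is to identify $\tilde o$ as the unique periodic point of $\tilde w$, using the characterization of $\mathcal{M}[\tilde w]$ as the set of all periodic points (Theorem~\ref{MinAbsorb_thm}) together with its confinement to the finite ball $\Lambda(x_f, r_0)$ (Corollary~\ref{contraction_minimumset}). First I would record that $\tilde o$ is a fixed point of $\tilde w$: since $x_f = \tilde o$ is a grid center, $\tilde w(\tilde o) = \widetilde{w(\tilde o)} = \widetilde{x_f} = \tilde o$, so $\tilde o$ is periodic and hence $\tilde o \in \mathcal{M}[\tilde w]$. It then remains only to exclude every other periodic point.

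The heart of the argument is a strict-descent estimate: for every $\tilde x \in \Lambda(x_f, r_0)$ with $\tilde x \neq \tilde o$,
\[
    d_E(\tilde w(\tilde x), \tilde o) < d_E(\tilde x, \tilde o).
\]
Here I exploit that $\lambda < \tfrac12$ and that the Euclidean half-diameter of a $\delta$-cube is $\theta = \delta\sqrt{n}/2$, whence $r_0 = \theta(1-\lambda)^{-1} < \delta\sqrt{n}$; consequently Theorem~\ref{thm_alpha_eps} guarantees that $\tilde y := \tilde w(\tilde x)$ again lies in $\Lambda(x_f, r_0)$, so $d_E(\tilde y, \tilde o) < \delta\sqrt{n}$. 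Writing $d_E(\tilde y, \tilde o) = \delta\sqrt{s}$ for an integer $s$ (forced because $(\tilde y - \tilde o)/\delta \in \mathbb{Z}^n$), Lemma~\ref{lem_6} places $\tilde y$ in an affine coordinate subspace $\tilde o + V^{(s)}$ with $0 \leq s < n$, and Lemma~\ref{lem_7}, applied to the cube $C_\delta(\tilde y/\delta)$ that contains $w(\tilde x)$, yields $d_E(w(\tilde x), \tilde o) \geq d_E(\tilde y, \tilde o) - \delta\sqrt{s}/2 = \tfrac12 d_E(\tilde y, \tilde o)$; the crucial point is that the correction term $\delta\sqrt{s}/2$ equals exactly half the distance. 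Combining this with the contraction inequality $d_E(w(\tilde x), \tilde o) \leq \lambda\, d_E(\tilde x, \tilde o)$ gives $d_E(\tilde y, \tilde o) \leq 2\lambda\, d_E(\tilde x, \tilde o) < d_E(\tilde x, \tilde o)$, while the degenerate case $\tilde y = \tilde o$ (i.e.\ $s = 0$) makes the inequality trivial.

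With the descent estimate in hand the conclusion is immediate. Every element of $\mathcal{M}[\tilde w]$ is periodic and, by Corollary~\ref{contraction_minimumset}, lies in $\Lambda(x_f, r_0)$, whose invariance under $\tilde w$ keeps whole orbits inside the ball. If a periodic point $\tilde x \neq \tilde o$ existed, chaining the strict inequality around its cycle would yield $d_E(\tilde x, \tilde o) < d_E(\tilde x, \tilde o)$, a contradiction; and should the orbit ever reach $\tilde o$, it would remain there, forcing $\tilde x = \tilde o$ anyway. Hence $\mathcal{M}[\tilde w] = \{\tilde o\}$.

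I expect the main obstacle to be calibrating the roundoff error to the dimension. The naive bound $d_E(\tilde w(\tilde x), \tilde o) \leq \theta + \lambda\, d_E(\tilde x, \tilde o)$ uses the full-diagonal error $\theta = \delta\sqrt{n}/2$, which for $n \geq 4$ is too large to beat the minimal nonzero grid distance $\delta$ no matter how small $\lambda$ is; this is precisely what makes the $n$-dimensional statement harder than Peruggia's planar version. The resolution---and the reason Lemmas~\ref{lem_6} and~\ref{lem_7} are established beforehand---is that a grid point at Euclidean distance $\delta\sqrt{s}$ from $\tilde o$ necessarily sits in an $s$-dimensional coordinate subspace through $\tilde o$, where the effective roundoff error shrinks from $\delta\sqrt{n}/2$ to $\delta\sqrt{s}/2 = \tfrac12 d_E(\cdot,\tilde o)$. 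It is this self-scaling of the error that conspires with $\lambda < \tfrac12$ to produce genuine contraction toward $\tilde o$.
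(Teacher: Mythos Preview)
Your proof is correct and takes a genuinely different route from the paper's. The paper argues by an iterative \emph{uniform-bound} descent: starting from the containment $\mathcal{M}[\tilde w]\subset\{d_E(\cdot,\tilde o)<\delta\sqrt{n}\}$, it uses the surjectivity $\tilde w(\mathcal{M})=\mathcal{M}$ to pick, for each $\tilde x\in\mathcal{M}$, a \emph{preimage} $\tilde y$ with $w(\tilde y)\in C_\delta(\tilde x/\delta)$, and then applies Lemma~\ref{lem_7} at $\tilde x$ with the \emph{ambient} dimension $k=n-1$ to upgrade the bound to $<\delta\sqrt{n-1}$; repeating this $n$ times drives the bound to zero. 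You instead prove a one-step \emph{pointwise} Lyapunov inequality $d_E(\tilde w(\tilde x),\tilde o)\le 2\lambda\, d_E(\tilde x,\tilde o)$ by applying Lemma~\ref{lem_7} at the \emph{image} $\tilde y=\tilde w(\tilde x)$ with $k=s$ equal to the actual integer $\|(\tilde y-\tilde o)/\delta\|_E^2$, so that the roundoff correction $\delta\sqrt{s}/2$ collapses to exactly $\tfrac12 d_E(\tilde y,\tilde o)$; periodicity then finishes in one line. Your organization is shorter and yields the extra quantitative statement that $\tilde w$ contracts radial distances to $\tilde o$ by the explicit factor $2\lambda<1$ throughout $\Lambda(x_f,r_0)$, whereas the paper's staged argument only shrinks a trapping region in $n$ discrete steps without extracting a rate.
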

\begin{proof}
Since the contractivity factor $\lambda < \tfrac{1}{2}$, on the basis of Theorem \ref{thm_alpha_eps} we know that the points of the minimum set $\mathcal{M}[\tilde{w}] \subset \mathcal{D}^n(\delta)$ satisfy
\begin{equation*}
    \forall \tilde{x} \in \mathcal{M}[\tilde{w}],\; d_E(\tilde{x}, \tilde{o}) < \delta \sqrt{n}.
\end{equation*}
Hence, from Lemma \ref{lem_6} we immediately get that
\begin{equation}\label{eq_10}
    \forall \tilde{x} \in \mathcal{M}[\tilde{w}],\; d_E(\tilde{x}, \tilde{o}) \leq \delta \sqrt{n-1}
\end{equation}
and, moreover, each $\tilde{x} \in \mathcal{M}(\tilde{w})$ belongs to an affine subspace $\tilde{o}+V^{(n-1)}$ with the linear space $V^{(n-1)}$ defined in the lemma. 

In addition, because $\tilde{w}(\mathcal{M}[\tilde{w}]) = \mathcal{M}[\tilde{w}]$ (Theorem \ref{MinAbsorb_thm}), for any $\tilde{x} \in \mathcal{M}[\tilde{w}]$ there exists some $\tilde{y} \in \mathcal{M}[\tilde{w}]$ such that $\tilde{w}(\tilde{y}) = \tilde{x}$ or equivalently $w(\tilde{y}) \in C_\delta(\tilde{x}/\delta)$. Therefore, by Lemma \ref{lem_7} we get that
\begin{equation}\label{eq_11}
    \forall \tilde{x} \in \mathcal{M}[\tilde{w}],\; d_E(\tilde{x}, \tilde{o}) \leq d_E(w(\tilde{y}), \tilde{o}) + \delta\frac{\sqrt{n-1}}{2}. 
\end{equation}
Moreover, since $w$ is a contraction with the contractivity factor $\lambda < 1/2$ and with the fixed point $\tilde{o}$, we have
\begin{equation*}
    d_E(w(\tilde{y}), \tilde{o}) \leq s d_E(\tilde{y}, \tilde{o}) < \tfrac{1}{2} d_E(\tilde{y}, \tilde{o})
\end{equation*}
and because $\tilde{y} \in \mathcal{M}[\tilde{w}]$, from inequality \eqref{eq_10} we conclude that
\begin{equation*}
    d_E(w(\tilde{y}), \tilde{o}) < \delta\frac{\sqrt{n-1}}{2}. 
\end{equation*}
Using the above inequality in inequality \eqref{eq_11}, as a result we get that
\begin{equation*}
    \forall \tilde{x} \in \mathcal{M}[\tilde{w}],\; d_E(\tilde{x}, \tilde{o}) < \delta \sqrt{n-1}.
\end{equation*}

In turn, the application of Lemma \ref{lem_6} to the above inequality yields $d_E(\tilde{x}, \tilde{o}) \leq \delta \sqrt{n-2}$, and then, by Lemma \ref{lem_7} and the same argument as above, we conclude that $d_E(\tilde{x}, \tilde{o}) < \delta \sqrt{n-2}$ for all $\tilde{x} \in \mathcal{M}[\tilde{w}]$. 

Continuing in this way, by successively decrementing the value in the square root by $1$ in each step, we eventually achieve $d_E(\tilde{x}, \tilde{o}) < \delta$, and then by Lemma \ref{lem_6} $d_E(\tilde{x}, \tilde{o}) = 0$ for all $\tilde{x} \in \mathcal{M}[\tilde{w}]$ as required. 
\end{proof}

\begin{figure}[t]
  \centering
  \includegraphics[width=0.7\textwidth]{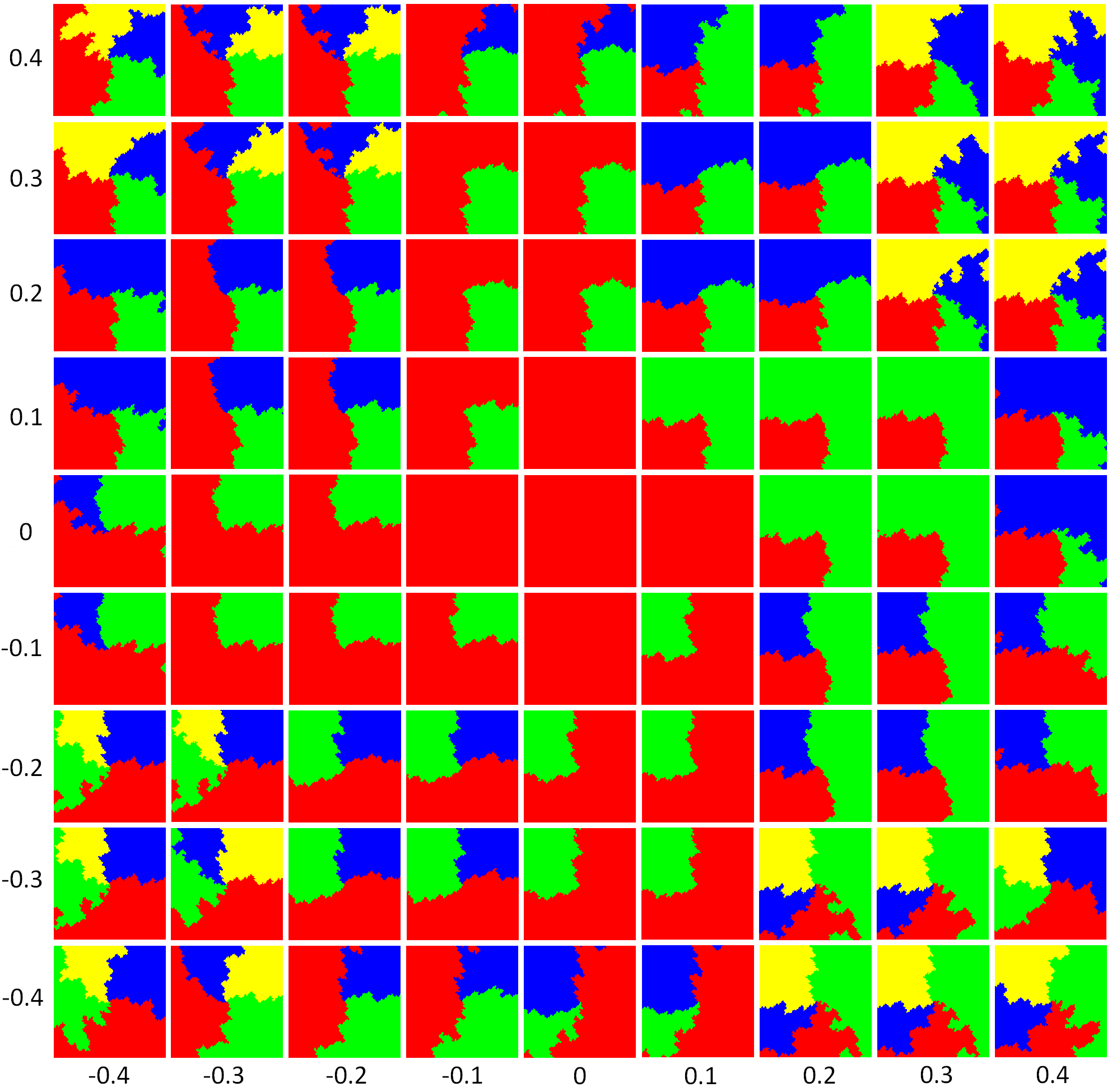}
  \caption{The basins of attractions generated by $\delta$-roundoffs of contractive similarities with the linear part specified by scaling factor $0.45$ and rotation $15^{\circ}$, and fixed points in $[-0.4, 0.4]^2$}
  \label{fig:2}
\end{figure}

The conclusion of the theorem holds, however, only for contractions with fixed-points coincident with centers of $\delta$-cubes (i.e., the points in $\mathcal{D}^n(\delta)$) and, except for the mappings with fixed-points coinciding with the zero vector $\mathbf{0}$, this match is naturally destroyed by a shift of the parameter $\delta$. What is more, due to the so-called "curse of dimensionality", the probability that the fixed point of a randomly chosen contraction is closer to the cube's boundary rather than to its center grows with the dimension $n$ of the space. In effect, even contractions with contractivity factors less than $\tfrac{1}{2}$ may induce minimal absorbing sets that are not singletons. Fig.~\ref{fig:2} offers a glimpse into the relationship between a minimal absorbing sets and the position of a fixed point within a $\delta$-cube. (We revisit this issue at the end of this section while discussing numerical experiments.) Of course, given fixed $\delta$, one can always transform a contraction so as to match its fixed point with any point in $\mathcal{D}^n(\delta)$. However, such a transformation usually alters the dynamics generated by iteration of the map, limiting it to a rather narrow subset of actual behaviors. Besides, in concrete implementations of the algorithms for IFS attractor or measure approximation such an initial transformation, although possible, is hardly ever done in practice, and yet the implementations seem to work properly. And in light of the stated loss of contractivity due to discretization, this is the reason for our study on \emph{why} the algorithms work correctly and, first of all, \emph{if} they really work correctly. Therefore, throughout this paper we do not impose such a limitation on mappings, and allow their fixed points to be located anywhere in $\Real^n$. 

Let us start to investigate the influence of changing the discretization parameter $\delta$ on the minimal absorbing set of a contraction. Shifting the value of $\delta$ is equivalent to scaling of coordinates in $\Real^n$ by the reciprocal of $\delta$ and therefore it alters the corresponding $\delta$-roundoffs of  underlying contractions. In the following, we will examine the impact that scaling in $\Real^n$ has on $\delta$-roundoffs of contractions and thus on the corresponding minimal absorbing sets. In addition, we will also take a look at the effect of applying translation in $\Real^n$ as we will need it in further theorems that serve as the theoretical foundation of numerical experiments whose results are presented at the end of this section. Given $w : \Real^n \rightarrow \Real^n$, let us denote:
\begin{equation}\label{map_trans}
    (w + t)(x) := w(x - t) + t, 
\end{equation}
\begin{equation}\label{map_scale}
    (\alpha w)(x) := \alpha w(x/\alpha),
\end{equation}
where $\alpha \in \Real$ and $t \in \Real^n$. In words, $(w + t)(.)$ and, respectively, $(\alpha w)(.)$ can be viewed as the mappings that result from transformation of the mapping $w(.)$ by translation $t$ and uniform scaling $\alpha$, respectively. 

Translation of a mapping in $\Real^n$ changes the mapping's $\delta$-roundoff, and thus, if the mapping is a contraction, translation may affect properties of the resulting minimal absorbing set, such as its geometry, the number of components, basins of attraction, and its cardinality. However, if a contraction is shifted so that the translation does not change the relative position of the fixed point with respect to the center of a $\delta$-cube in which the point resides, then the only effect the transformation has on the minimal absorbing set is shifting the set by the same vector and without altering any other properties of the set. The following theorem and corollary formalizes this translation invariance of the minimal absorbing set properties in a somewhat more general manner that does not appeal to a fixed point.   
\medskip
\begin{thmm}\label{thm_trans}
Let $w : \Real^n \rightarrow \Real^n$ and $\tilde{w} : \mathcal{D}^n(\delta) \rightarrow \mathcal{D}^n(\delta)$. Then, for any $m\in \mathbb{Z}^n$ and any $x \in \Real^n$,  the $\delta$-roundoffs of $w$ and $(w + \delta m)$ satisfy
\begin{equation}
    (\widetilde{w + \delta m})(\tilde{x} + \delta m) = \tilde{w}(\tilde{x}) + \delta m.  
\end{equation}
\end{thmm}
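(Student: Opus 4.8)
The plan is to reduce everything to the definition of the $\delta$-roundoff of a mapping together with a single covariance property of the point-roundoff operator $\widetilde{\,\cdot\,}$ under translations by lattice vectors $\delta m$, $m \in \mathbb{Z}^n$. No fixed point or contractivity is needed; the statement is purely about how the discretization interacts with grid-aligned shifts.

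First I would observe that $\tilde{x} + \delta m$ is itself a point of $\mathcal{D}^n(\delta)$: both $\tilde{x}$ and $\delta m$ lie in $\mathcal{D}^n(\delta) = \delta\mathbb{Z}^n$, which is closed under addition, so $\tilde{x} + \delta m$ equals its own roundoff and is a legitimate argument for the roundoff of a map. Unfolding the definition of the roundoff of the translated mapping (Def.~\ref{roundoff_def}) at this grid point, and then applying the definition \eqref{map_trans} of $w + \delta m$, I get
\begin{equation*}
(\widetilde{w + \delta m})(\tilde{x} + \delta m) = \widetilde{(w + \delta m)(\tilde{x} + \delta m)} = \widetilde{\,w\big((\tilde{x} + \delta m) - \delta m\big) + \delta m\,} = \widetilde{w(\tilde{x}) + \delta m}.
\end{equation*}
This converts the claim into the assertion that $\widetilde{z + \delta m} = \tilde{z} + \delta m$ for the particular point $z = w(\tilde{x})$, and it is no harder to prove this for every $z \in \Real^n$.

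The heart of the argument is therefore the covariance of the point-roundoff under lattice translations, which I would isolate as the key step. Writing $\tilde{z} = \delta k$ with $k \in \mathbb{Z}^n$ the unique index for which $z \in C_\delta(k)$, the defining half-open product \eqref{delta_cube} gives, coordinate by coordinate, $z_i \in [(k_i - \tfrac{1}{2})\delta, (k_i + \tfrac{1}{2})\delta)$; adding $\delta m_i$ shifts each interval exactly onto $[(k_i + m_i - \tfrac{1}{2})\delta, (k_i + m_i + \tfrac{1}{2})\delta)$, so that $z + \delta m \in C_\delta(k + m)$. Since $k + m \in \mathbb{Z}^n$, this is a genuine tile of $\Grid^n(\delta)$, whence $\widetilde{z + \delta m} = \delta(k + m) = \tilde{z} + \delta m$. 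The one point that needs care is that an integer shift maps a half-open cube onto another half-open cube with no overlap or gap; this is exactly the $\delta\mathbb{Z}^n$-periodicity of the tiling, and it is what fails for non-lattice translations, so it is where the hypothesis $m \in \mathbb{Z}^n$ is genuinely used. I do not expect any obstacle here beyond the bookkeeping of these boundaries.

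Finally I would combine the two displays: applying the covariance with $z = w(\tilde{x})$ yields $\widetilde{w(\tilde{x}) + \delta m} = \widetilde{w(\tilde{x})} + \delta m$, and by Def.~\ref{roundoff_def} we have $\widetilde{w(\tilde{x})} = \tilde{w}(\tilde{x})$. Chaining this with the first computation gives $(\widetilde{w + \delta m})(\tilde{x} + \delta m) = \tilde{w}(\tilde{x}) + \delta m$, which is the assertion of the theorem.
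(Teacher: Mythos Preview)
Your proposal is correct and follows essentially the same approach as the paper: both reduce the claim to the lattice-translation covariance $\widetilde{z + \delta m} = \tilde{z} + \delta m$ of the point-roundoff (the paper phrases this via the decomposition $y = \tilde{y} + \delta\varepsilon$, you via the coordinatewise shift of the half-open cubes), and then specialize to $z = w(\tilde{x})$ after unfolding $(w + \delta m)(\tilde{x} + \delta m) = w(\tilde{x}) + \delta m$. The only differences are expository ordering and the level of detail in the covariance step.
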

\begin{proof}
Let $\delta > 0$. By Eq.~\eqref{map_trans} we have, for all $x \in \Real^n$ and $m \in \mathbb{Z}^n$,
\begin{equation}\label{eq_trans}
    (w + \delta m)(x + \delta m) = w(x) + \delta m.
\end{equation}
Let $y \in \Real^n$. Then there exists $\varepsilon = [\varepsilon_1, \dots, \varepsilon_n]$, $\varepsilon_i \in [-1/2, 1/2)$, such that $y = \tilde{y} + \delta \varepsilon$. It follows that, for any $m \in \mathbb{Z}^n$,
\begin{equation*}
    \widetilde{y + \delta m} = \widetilde{(\tilde{y} + \delta \varepsilon + \delta m)} = \widetilde{(\tilde{y} + \delta m)} = \tilde{y} + \delta m,
\end{equation*}
because by Eq.~\eqref{delta_cube} and Def.~\ref{roundoff_def}, for any $\tilde{x} \in \mathcal{D}^n(\delta)$, $\widetilde{\tilde{x} + \delta \varepsilon} = \tilde{x}$ , and $\tilde{y} + \delta m \in \mathcal{D}^n(\delta)$. We get the conclusion of the theorem by taking $y = w(\tilde{x})$ in the above equation and applying Eq.~\eqref{eq_trans} to its left-hand side.
\end{proof}
\medskip
Hence, by the definition of a minimal absorbing set (Def.~\ref{minabsorbing_def}), we get the following:
\medskip
\begin{corollary}
If $w : \Real^n \rightarrow \Real^n$ is a contraction and $\tilde{w} : \mathcal{D}^n(\delta) \rightarrow \mathcal{D}^n(\delta)$, then for any $m \in \mathbb{Z}^n$, 
\begin{equation}\label{trans_minimum}
    \mathcal{M}[\widetilde{w + \delta m}] = \mathcal{M}[\tilde{w}] + \delta m,
\end{equation}
where the right-hand side of the equation denotes the translation of each point of $\mathcal{M}[\tilde{w}]$ by vector $\delta m$.  
\end{corollary}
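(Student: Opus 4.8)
The plan is to read Theorem~\ref{thm_trans} as a conjugacy statement and then transport the periodic-point description of the minimal absorbing set through it. Write $\tau(\tilde{x}) := \tilde{x} + \delta m$ for the translation by $\delta m$; since $m \in \mathbb{Z}^n$, this $\tau$ is a bijection of $\mathcal{D}^n(\delta)$ onto itself. Theorem~\ref{thm_trans} then says precisely that $\widetilde{w+\delta m}\circ \tau = \tau \circ \tilde{w}$ as maps on $\mathcal{D}^n(\delta)$, and a routine induction on $k$ promotes this to the iterated form $(\widetilde{w+\delta m})^{\circ k}\circ \tau = \tau \circ \tilde{w}^{\circ k}$ for every $k \in \Natural$.

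Next I would ensure both sides of the claimed identity are meaningful. Since $(w+\delta m)(x) = w(x - \delta m) + \delta m$ is a conjugate of $w$ by a translation of $\Real^n$, it is again a contraction with the same contractivity factor $\lambda$; hence Corollary~\ref{contraction_minimumset} guarantees that both $\mathcal{M}[\tilde{w}]$ and $\mathcal{M}[\widetilde{w+\delta m}]$ exist, and Theorem~\ref{MinAbsorb_thm} identifies each of them with the set of all periodic points of the respective map in $\mathcal{D}^n(\delta)$.

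The core step is then purely combinatorial. By the iterated conjugacy and the injectivity of $\tau$, the relation $\tilde{w}^{\circ k}(\tilde{x}) = \tilde{x}$ holds if and only if $(\widetilde{w+\delta m})^{\circ k}(\tau(\tilde{x})) = \tau(\tilde{x})$. Thus $\tilde{x}$ is periodic for $\tilde{w}$ exactly when $\tilde{x} + \delta m$ is periodic for $\widetilde{w+\delta m}$, so $\tau$ carries the periodic-point set of $\tilde{w}$ bijectively onto that of $\widetilde{w+\delta m}$. Combined with the periodic-point characterization of the two minimal absorbing sets, this yields $\mathcal{M}[\widetilde{w+\delta m}] = \mathcal{M}[\tilde{w}] + \delta m$, as claimed.

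I do not anticipate a serious obstacle: the substantive dynamical content was already secured in Theorem~\ref{thm_trans}, and the corollary is essentially its transport through the conjugacy. The only points needing care are verifying that $\tau$ is a genuine bijection of the grid, so that the periodic-orbit correspondence is exact in both directions, and that $w + \delta m$ inherits contractivity, so that the right-hand minimal absorbing set is guaranteed to exist in the first place.
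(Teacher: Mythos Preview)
Your proof is correct. The paper gives no detailed argument for this corollary, merely asserting that it follows from Theorem~\ref{thm_trans} together with Definition~\ref{minabsorbing_def}; your route via the periodic-point characterization (Theorem~\ref{MinAbsorb_thm}) together with the existence check from Corollary~\ref{contraction_minimumset} is a clean and fully justified way to fill in those details, and is in the same spirit as what the paper intends.
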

\medskip
A similar simple fact concerning scaling is expressed in the next theorem and corollary: A simultaneous scaling of a contraction on $\Real^n$ and a discrete space $\mathcal{D}^n(\delta)$ by a factor $\alpha > 0$ implies scaling, by $\alpha$, the minimal absorbing set for the $\delta$-roundoff of the contraction. That is, the result is the minimal absorbing set for the $(\alpha\delta)$-roundoff of $\alpha w$, which is an $\alpha$-scaled copy of the minimal absorbing set for the $\delta$-roundoff of $w$.     
\medskip
\begin{thmm}
Let $w : \Real^n \rightarrow \Real^n$. Let $\tilde{w} : \mathcal{D}^n(\delta_1) \rightarrow \mathcal{D}^n(\delta_1)$ and $\widetilde{(\alpha w)} : \mathcal{D}^n(\delta_2) \rightarrow \mathcal{D}^n(\delta_2)$, where $\alpha = \delta_2 / \delta_1$. Then, for any $x \in \Real^n$, 
\begin{equation}
    \widetilde{(\alpha w)}(\widetilde{\alpha x})_{\delta_2} = \alpha \tilde{w}(\tilde{x})_{\delta_1},
\end{equation}
where the indices indicate the $\delta$ coefficients of the roundoffs. 
\end{thmm}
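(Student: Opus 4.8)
The plan is to isolate a single commutation identity between uniform scaling and the roundoff operator, and then obtain the theorem by unwinding the relevant definitions. Since $\delta_1, \delta_2 > 0$ we have $\alpha = \delta_2/\delta_1 > 0$, and this positivity is precisely the feature that makes the whole argument go through. First I would prove the lemma that for every $z \in \Real^n$,
\begin{equation*}
    \widetilde{\alpha z}_{\delta_2} = \alpha\,\tilde{z}_{\delta_1},
\end{equation*}
where the subscripts record the grid in which each roundoff is taken. Writing $\tilde{z}_{\delta_1} = \delta_1 m$ for the unique $m \in \mathbb{Z}^n$ with $z \in C_{\delta_1}(m)$, the defining coordinate conditions are $z_i \in [(m_i - \tfrac12)\delta_1, (m_i + \tfrac12)\delta_1)$. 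Multiplying each by $\alpha > 0$ preserves the ordering, and hence both the closed left endpoint and the open right endpoint, giving $\alpha z_i \in [(m_i - \tfrac12)\delta_2, (m_i + \tfrac12)\delta_2)$, i.e.\ $\alpha z \in C_{\delta_2}(m)$ with the same index $m$. Therefore $\widetilde{\alpha z}_{\delta_2} = \delta_2 m = \alpha \delta_1 m = \alpha\,\tilde{z}_{\delta_1}$.

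With this lemma available, the theorem is a short chain of substitutions. Starting from the left-hand side and invoking the definition of the $\delta_2$-roundoff of the mapping $\alpha w$ (Def.~\ref{roundoff_def}), I would write
\begin{equation*}
    \widetilde{(\alpha w)}(\widetilde{\alpha x})_{\delta_2} = \widetilde{(\alpha w)\big(\widetilde{\alpha x}_{\delta_2}\big)}_{\delta_2}.
\end{equation*}
Substituting $\widetilde{\alpha x}_{\delta_2} = \alpha\,\tilde{x}_{\delta_1}$ from the lemma, then using the definition $(\alpha w)(y) = \alpha w(y/\alpha)$ from Eq.~\eqref{map_scale} with $y = \alpha\,\tilde{x}_{\delta_1}$ to collapse $(\alpha w)(\alpha\,\tilde{x}_{\delta_1}) = \alpha\, w(\tilde{x}_{\delta_1})$, the expression becomes $\widetilde{\alpha\, w(\tilde{x}_{\delta_1})}_{\delta_2}$. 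A second application of the lemma, now to the point $w(\tilde{x}_{\delta_1}) \in \Real^n$, yields $\alpha\,\widetilde{w(\tilde{x}_{\delta_1})}_{\delta_1}$, and by the definition of the $\delta_1$-roundoff of the mapping $w$ this equals $\alpha\,\tilde{w}(\tilde{x})_{\delta_1}$, the desired right-hand side.

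I expect the only genuine obstacle to lie in the bookkeeping of the commutation lemma, namely verifying that the half-open cubes $C_\delta$ transform correctly under scaling. The essential point is that multiplication by the strictly positive $\alpha$ is an order-preserving bijection of $\Real$, so it sends each half-open interval $[(m_i-\tfrac12)\delta_1, (m_i+\tfrac12)\delta_1)$ onto $[(m_i-\tfrac12)\delta_2, (m_i+\tfrac12)\delta_2)$ without exchanging the open and closed endpoints; had $\alpha$ been negative this matching of indices would fail on boundary points. Everything else---the two appearances of the roundoff-of-a-mapping definition and the algebraic identity defining $\alpha w$---is purely routine.
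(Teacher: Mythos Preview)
Your proof is correct and follows essentially the same route as the paper: both isolate the commutation identity $\widetilde{\alpha z}_{\delta_2}=\alpha\,\tilde z_{\delta_1}$ (the paper's Eq.~\eqref{delta_scale}) and then unwind Def.~\ref{roundoff_def} together with $(\alpha w)(\alpha x)=\alpha w(x)$; the only cosmetic difference is that you run the chain from the left-hand side to the right while the paper runs it the other way.
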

\begin{proof}
Let $y \in \Real^n$ and $\tilde{y} \in \mathcal{D}^n(\delta_1)$. Hence there exists $m \in \mathbb{Z}^n$ such that $\tilde{y} = \delta_1 m$, and thus $\alpha \tilde{y} = \delta_2 m \in \mathcal{D}^n(\delta_2)$. In turn, $y = \tilde{y} + \delta_1 \varepsilon$ for some $\varepsilon = [\varepsilon_1,\dots, \varepsilon_n]$, $\varepsilon_i \in [-1/2, 1/2)$. It follows that
\begin{equation*}
    \alpha y = \alpha \tilde{y} + \alpha \delta_1 \varepsilon = \delta_2 m + \delta_2 \varepsilon,
\end{equation*}
so $\widetilde{\alpha y} = \delta_2 m$. Thus we show that
\begin{equation}\label{delta_scale}
    \alpha (\tilde{y})_{\delta_1} = (\widetilde{\alpha y})_{\delta_2}.
\end{equation}
Now, by Def.~\ref{roundoff_def}, $\tilde{w}(\tilde{x})_{\delta_1} = (\widetilde{w(\tilde{x})})_{\delta_1}$, so on the basis of Eq.~\eqref{delta_scale} we get that
\begin{equation*}
\alpha \tilde{w}(\tilde{x})_{\delta_1} = (\widetilde{\alpha w(\tilde{x})})_{\delta_2}.
\end{equation*}
Since by Eq.~\eqref{map_scale} we have 
$(\alpha w)(\alpha x) = \alpha w(x)$, it follows that
\begin{equation*}
    (\widetilde{\alpha w(\tilde{x})})_{\delta_2} = (\widetilde{(\alpha w)(\alpha \tilde{x})})_{\delta_2} = \widetilde{(\alpha w)}(\alpha \tilde{x})_{\delta_2} = \widetilde{(\alpha w)}(\widetilde{\alpha x})_{\delta_2},
\end{equation*}
where the last equality follows from Eq.~\eqref{delta_scale}. This completes the proof. 
\end{proof}
\medskip

\medskip
\begin{corollary}
If $w : \Real^n \rightarrow \Real^n$ is a contraction, then for any $\delta_1, \delta_2 > 0$
\begin{equation}\label{scale_minimum}
    \alpha \mathcal{M}[\tilde{w}]_{\delta_1} = \mathcal{M}[\widetilde{\alpha w}]_{\delta_2},
\end{equation}
where $\alpha = \delta_2 / \delta_1$, and $\tilde{w} : \mathcal{D}^n(\delta_1) \rightarrow \mathcal{D}^n(\delta_1)$ and $\widetilde{(\alpha w)} : \mathcal{D}^n(\delta_2) \rightarrow \mathcal{D}^n(\delta_2)$.
\end{corollary}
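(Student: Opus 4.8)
The plan is to recognize that the uniform scaling map $\phi : \mathcal{D}^n(\delta_1) \rightarrow \mathcal{D}^n(\delta_2)$ defined by $\phi(\tilde{x}) := \alpha \tilde{x}$ is a bijective conjugacy between the discrete dynamical systems $\tilde{w}$ and $\widetilde{\alpha w}$, and then to transport the characterization of minimal absorbing sets as sets of periodic points (Theorem~\ref{MinAbsorb_thm}) across this conjugacy. The heavy lifting is already done by the preceding theorem, so the corollary is essentially a matter of interpreting that pointwise identity dynamically.

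First I would verify that $\phi$ is a bijection. Since $\mathcal{D}^n(\delta_i) = \{\delta_i m : m \in \mathbb{Z}^n\}$ and $\alpha \delta_1 m = \delta_2 m$ for every $m$, the assignment $\delta_1 m \mapsto \delta_2 m$ maps $\mathcal{D}^n(\delta_1)$ bijectively onto $\mathcal{D}^n(\delta_2)$; this is exactly the content of Eq.~\eqref{delta_scale}. Next I would rewrite the preceding theorem as a conjugacy identity. Taking $x = \tilde{x} \in \mathcal{D}^n(\delta_1)$ and using $(\widetilde{\alpha \tilde{x}})_{\delta_2} = \alpha (\tilde{x})_{\delta_1} = \phi(\tilde{x})$ (again Eq.~\eqref{delta_scale}), the theorem's identity $\widetilde{(\alpha w)}(\widetilde{\alpha x})_{\delta_2} = \alpha \tilde{w}(\tilde{x})_{\delta_1}$ becomes $\widetilde{\alpha w}(\phi(\tilde{x})) = \phi(\tilde{w}(\tilde{x}))$, that is, $\widetilde{\alpha w} \circ \phi = \phi \circ \tilde{w}$.

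From this I would conclude by an easy induction that $(\widetilde{\alpha w})^{\circ k} \circ \phi = \phi \circ \tilde{w}^{\circ k}$ for all $k \in \Natural$, so that, because $\phi$ is a bijection, $\tilde{x}$ is a periodic point of $\tilde{w}$ of period $k$ if and only if $\phi(\tilde{x})$ is a periodic point of $\widetilde{\alpha w}$ of the same period. I would also note that $\alpha w$ is again a contraction with the same contractivity factor $\lambda$: a one-line computation gives $d\big((\alpha w)(x),(\alpha w)(y)\big) = \alpha\, d\big(w(x/\alpha), w(y/\alpha)\big) \leq \lambda\, d(x,y)$. Hence by Corollary~\ref{contraction_minimumset} both $\mathcal{M}[\tilde{w}]$ and $\mathcal{M}[\widetilde{\alpha w}]$ exist (and are finite). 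Finally, by Theorem~\ref{MinAbsorb_thm} each of these minimal absorbing sets coincides with the full set of periodic points of the respective map in the whole space; since $\phi$ carries the periodic points of $\tilde{w}$ bijectively onto those of $\widetilde{\alpha w}$ and acts as multiplication by $\alpha$, we obtain $\alpha\, \mathcal{M}[\tilde{w}]_{\delta_1} = \phi(\mathcal{M}[\tilde{w}]) = \mathcal{M}[\widetilde{\alpha w}]_{\delta_2}$, which is the asserted Eq.~\eqref{scale_minimum}.

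I do not expect a serious obstacle, since the preceding theorem already supplies the crucial conjugacy identity. The only care needed is bookkeeping between the two grids: making sure $\phi$ is genuinely a bijection rather than merely injective, and that periodicity is characterized in the whole space $\mathcal{D}^n(\delta_i)$ (not in some bounded $C$), so that Theorem~\ref{MinAbsorb_thm} applies directly. The mildly delicate point is verifying that $\alpha w$ inherits contractivity, as this is what licenses the use of Corollary~\ref{contraction_minimumset} to guarantee that $\mathcal{M}[\widetilde{\alpha w}]$ is a genuine absorbing set and not merely the possibly non-absorbing set of periodic points flagged in Remark~\ref{rem_m_nonabsor}.
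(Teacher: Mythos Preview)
Your proof is correct and is essentially a careful unpacking of what the paper leaves implicit: the paper states this corollary without proof, treating it as an immediate consequence of the preceding theorem together with the definition of the minimal absorbing set. Your conjugacy argument via $\phi(\tilde{x})=\alpha\tilde{x}$, the transport of periodic points through Theorem~\ref{MinAbsorb_thm}, and the check that $\alpha w$ is again a contraction (so Corollary~\ref{contraction_minimumset} applies and Remark~\ref{rem_m_nonabsor} is avoided) are exactly the details one would supply to make that implicit step rigorous.
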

\medskip
Note that if $w$ is an affine contraction with a fixed point coinciding with the zero vector $\mathbf{0}$ (i.e., $w$ is a linear mapping), then by \eqref{map_scale} we get that $\alpha w = w$ for any $\alpha \neq 0$. Hence, on the basis of Eq.~\eqref{scale_minimum}, any change of the value of $\delta$, which controls the discretization accuracy, has no impact on the cardinality of the resulting minimal absorbing set---in such a case, shifting $\delta$ just scales the set uniformly without any modification of its geometry. In general, for any allowed values $\delta_1$ and $\delta_2$ of the parameter $\delta$, if $w$ is a contraction discretized with respect to $\delta_1$, then there is a contraction $\alpha w$, $\alpha = \delta_2/\delta_1$, whose $\delta_2$-discretization results in a minimal absorbing set of the same cardinality as the minimal absorbing set resulting from the $\delta_1$-discretization of $w$. Hence, for any $\delta_1$ and $\delta_2$, there is a one-to-one correspondence between the sets of contractions determined by a given cardinality of the resulting minimal absorbing sets in $\mathcal{D}^n(\delta_1)$ and $\mathcal{D}^n(\delta_2)$, respectively. This one-to-one correspondence suggests that if contractions are chosen randomly with equal probability, then the chance for the discretization of a contraction to yield a minimal absorbing set of a given cardinality is independent of the value of $\delta$. In the following, we examine this conjecture for the case of affine contractions.   
 
An affine contraction $w : \Real^n \rightarrow \Real^n$ with a fixed point $x_f = w(x_f)$ can be expressed as 
\begin{equation}\label{affine_fixed_point}
    w(x) = l(x - x_f) + x_f,
\end{equation}
where $l : \Real^n \rightarrow \Real^n$ is a contractive linear mapping. On that basis, we can construct a \emph{random} affine contraction on $(\Real^n, d)$ as a pair $[L, X_f]$ of (independent) random variables such that $X_f \in \Real^n$ is a random vector representing a fixed point, and $L \in \{ M\in \Real^{n\times n} : \left\| M \right \|_d < 1\}$ is a random matrix representing a linear contraction on $(\Real^n, d)$, where $\left\| . \right \|_d$ is the matrix norm induced by the vector norm that underlies the metric $d$. We can thus define a random affine contraction on $(\Real^n, d)$ by treating  $[L, X_f]$ as a mapping on $\Real^n$: 
\begin{equation*}
    [L, X_f](x) := L(x - X_f) + X_f.
\end{equation*}
Finally, using the notation above we can define a random variable
\begin{equation*}
    \mathcal{M}^{\#}[L, X_f] := \left| \mathcal{M}\widetilde{[L, X_f]} \right|,
\end{equation*}
whose outcomes are the cardinalities of the minimal absorbing sets for the roundoffs of $[L, X_f]$.

The following lemma formalizes the statement that if for a given $\delta$, the conditional distributions of the random variable $X_f$ over $\delta$-cubes are identical, then we can simplify the study of the distribution of $\mathcal{M}^{\#}[L, X_f]$ by considering the outcomes of $X_f$ restricted to an arbitrarily chosen $\delta$-cube.   
\medskip
\begin{lem}\label{lemma_trans}
Let $\delta >0$ be given. Let $[L, X_f]$ be a random affine contraction on $\Real^n$ such that the probability distribution of $X_f \in \Real^n$ satisfies
\begin{equation}\label{cond_equ}
    \prob(X_f \;|\; X_f \in C_{\delta}(m_1)) = \prob(X_f - \delta(m_2 - m_1) \;|\; X_f \in C_{\delta}(m_2)),
\end{equation}
for any $m_1, m_2 \in \mathbb{Z}^n$ for which $\prob(X_f \in C_{\delta}(m_1)), \prob(X_f \in C_{\delta}(m_2)) \neq 0$, where $C_{\delta}(m_1), C_{\delta}(m_2) \in \mathcal{G}^n(\delta)$. Then 
\begin{equation}
    \prob(\mathcal{M}^{\#}[L, X_f]) = \prob(\mathcal{M}^{\#}[L, X_f] \;|\; X_f \in C_{\delta}(m))
\end{equation}
for any $m \in \mathbb{Z}^n$ such that $\prob(X_f \in C_{\delta}(m)) \neq 0$.
\end{lem}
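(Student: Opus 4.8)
The plan is to reduce the claim to a single structural fact—that the conditional distribution of $\mathcal{M}^{\#}[L, X_f]$ given $X_f \in C_{\delta}(m)$ does not depend on the cube $C_{\delta}(m)$—and then finish by the law of total probability. The engine behind this is the translation invariance already recorded in the Corollary to Theorem~\ref{thm_trans}, transported through the probabilistic hypothesis \eqref{cond_equ}.

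First I would establish the deterministic invariance of the cardinality in the fixed-point variable. For a fixed contractive matrix $\ell$ and a fixed point $x\in\Real^n$, a direct computation from Eq.~\eqref{map_trans} shows that $[\ell, x] + \delta m = [\ell, x + \delta m]$ for every $m\in\mathbb{Z}^n$: indeed $([\ell, x] + \delta m)(y) = \ell\big(y - \delta m - x\big) + x + \delta m = \ell\big(y - (x+\delta m)\big) + (x + \delta m)$. Applying Eq.~\eqref{trans_minimum} to $w = [\ell, x]$ then gives $\mathcal{M}[\widetilde{[\ell, x + \delta m]}] = \mathcal{M}[\widetilde{[\ell, x]}] + \delta m$, so taking cardinalities yields
\begin{equation*}
\mathcal{M}^{\#}[\ell, x + \delta m] = \mathcal{M}^{\#}[\ell, x]
\end{equation*}
for every $\ell$, every $x$, and every $m\in\mathbb{Z}^n$. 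Since this identity holds pointwise in $\ell$, it holds for every realization of the random matrix $L$, so $\mathcal{M}^{\#}[L, X_f] = \mathcal{M}^{\#}[L, X_f - \delta m]$ almost surely.

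Next I would compare two cubes of positive mass. Fix $m_1, m_2$ with $\prob(X_f \in C_{\delta}(m_i)) \neq 0$ and set $m := m_2 - m_1$. Conditioning on $X_f \in C_{\delta}(m_2)$ and using the almost-sure invariance just proved, I get $\prob(\mathcal{M}^{\#}[L, X_f] = k \mid X_f \in C_{\delta}(m_2)) = \prob(\mathcal{M}^{\#}[L, X_f - \delta m] = k \mid X_f \in C_{\delta}(m_2))$ for each admissible cardinality $k$. Writing $Y := X_f - \delta m$, note that $Y \in C_{\delta}(m_1)$ precisely when $X_f \in C_{\delta}(m_2)$; moreover, because $L$ is independent of $X_f$ (hence of $Y$) and, by hypothesis \eqref{cond_equ}, the law of $Y$ given $X_f \in C_{\delta}(m_2)$ coincides with the law of $X_f$ given $X_f \in C_{\delta}(m_1)$, the joint law of $(L, Y)$ under the former conditioning equals the joint law of $(L, X_f)$ under the latter. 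Pushing both forward through the measurable map $(\ell, z) \mapsto \mathcal{M}^{\#}[\ell, z]$ gives $\prob(\mathcal{M}^{\#}[L, X_f] = k \mid X_f \in C_{\delta}(m_2)) = \prob(\mathcal{M}^{\#}[L, X_f] = k \mid X_f \in C_{\delta}(m_1))$. Thus the conditional distribution is the same over every cube of positive mass.

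Finally I would conclude by total probability, summing over the countable collection of cubes of positive mass: $\prob(\mathcal{M}^{\#}[L, X_f] = k) = \sum_{m'} \prob(\mathcal{M}^{\#}[L, X_f] = k \mid X_f \in C_{\delta}(m'))\,\prob(X_f \in C_{\delta}(m'))$. Replacing each conditional factor by the common value $\prob(\mathcal{M}^{\#}[L, X_f] = k \mid X_f \in C_{\delta}(m))$ and using that the masses sum to one collapses the right-hand side to that common value; since this holds for all $k$, the two distributions agree, which is exactly the assertion. The hard part will not be the geometric identity (which is immediate from Theorem~\ref{thm_trans}) but the probabilistic bookkeeping in the middle step: one must transport the distributional hypothesis \eqref{cond_equ} cleanly through the deterministic shift $X_f \mapsto X_f - \delta m$ and through the independence of $L$, so that the two conditional joint laws genuinely coincide and their pushforwards under $\mathcal{M}^{\#}$ are therefore equal.
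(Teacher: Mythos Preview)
Your proof is correct and follows essentially the same route as the paper's: both establish the deterministic translation invariance $\mathcal{M}^{\#}[\ell, x+\delta m]=\mathcal{M}^{\#}[\ell,x]$ via Eq.~\eqref{trans_minimum}, transport the distributional hypothesis \eqref{cond_equ} through this invariance to show the conditional law of $\mathcal{M}^{\#}[L,X_f]$ is the same over every cube of positive mass, and finish with the law of total probability. Your write-up is in fact a bit more explicit than the paper's about the role of the independence of $L$ and $X_f$ and the pushforward of joint laws, which the paper leaves implicit in its first displayed step.
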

\begin{proof}
From the assumption \eqref{cond_equ} we get that
\begin{equation}\label{assump}
    \prob(\mathcal{M}^{\#}[L, X_f] \;|\; X_f \in C_{\delta}(m_1)) = \prob(\mathcal{M}^{\#}[L, X_f - \delta (m_2 - m_1)] \;|\; X_f \in C_{\delta}(m_2)).
\end{equation}
In turn, from Eq.~\eqref{affine_fixed_point} it follows that  
\begin{equation}\label{fixed_trans}
(w - \delta m)(x) = l(x - x_f + \delta m) + x_f - \delta m,  
\end{equation}
so $w - \delta m$ is an affine contraction with the fixed point $x_f - \delta m$ and with the same linear part $l$ as in $w$. Moreover, from Eq.~\eqref{trans_minimum} we get that
\begin{equation}\label{card_trans}
    \left|\mathcal{M}[\widetilde{w - \delta m}] \right| = \left|\mathcal{M}[\tilde{w}]\right|
\end{equation}
Setting $m = m_2 - m_1$, from Eqs.~\eqref{fixed_trans} and \eqref{card_trans} we conclude that $\mathcal{M}^{\#}[L, X_f - \delta (m_2 - m_1)] = \mathcal{M}^{\#}[L, X_f]$, and hence
\begin{equation}\label{assum_con}
    \prob(\mathcal{M}^{\#}[L, X_f - \delta (m_2 - m_1)] \;|\; X_f \in C_{\delta}(m_2)) = \prob(\mathcal{M}^{\#}[L, X_f] \;|\; X_f \in C_{\delta}(m_2)).
\end{equation}
Putting Eqs.~\eqref{assump} and \eqref{assum_con} together we get that
\begin{equation*}
    \prob(\mathcal{M}^{\#}[L, X_f] \;|\; X_f \in C_{\delta}(m_1)) = \prob(\mathcal{M}^{\#}[L, X_f] \;|\; X_f \in C_{\delta}(m_2)).
\end{equation*}
Since the cubes in $\mathcal{G}^n(\delta)$ form a countable partition of $\Real^n$, the conclusion of the theorem follows from the law of total probability:
\begin{equation*}
\begin{split}
    \prob(\mathcal{M}^{\#}[L, X_f]) &= \sum_{m \in \mathbb{Z}^n} \prob(\mathcal{M}^{\#}[L, X_f] \;|\; X_f \in C_{\delta}(m)) \prob(X_f \in C_{\delta}(m))\\ 
    &= \prob(\mathcal{M}^{\#}[L, X_f] \;|\; X_f \in C_{\delta}(m)).
\end{split}
\end{equation*}
\end{proof}
The next theorem justifies the approach we took in the numerical experiments, whose results are presented in the sequel. The theorem states that the distribution of the random variable $\mathcal{M}^{\#}[L, X_f]$ in which $X_f$ is drawn from the uniform distribution over a $\delta_1$-cube for an arbitrarily chosen $\delta_1$, is identical to the distribution of any random variable $\mathcal{M}^{\#}[L, Y_f]$ in which $Y_f$ is drawn from the uniform distribution over any bounded set made of a finite number of $\delta_2$-cubes, where $\delta_2$ is any value of the parameter $\delta$. Roughly speaking, if we limit the set of possible affine contractions to the ones with fixed points in a bounded set (which is a natural constraint in real computation) and assume that the occurrence of an affine contraction with any of these fixed points is equally likely, then we can simplify the study of the distribution of $\mathcal{M}^{\#}[L, X_f]$ by considering the outcomes of $X_f$ restricted to an arbitrarily chosen $\delta$-cube for an arbitrarily chosen $\delta$. 
\medskip
\begin{thmm}\label{sampling_thm}
Given $\delta_1, \delta_2 > 0$, let $\mathcal{C}_{\delta_1} \subset \mathcal{G}^n(\delta_1)$ and $\mathcal{C}_{\delta_2} \subset \mathcal{G}^n(\delta_2)$ be finite families of cubes from the $\delta_1$-grid and $\delta_2$-grid in $\Real^n$, respectively. Let $[L, X_f]$ and $[L, Y_f]$ be random affine contractions on $\Real^n$ such that $X_f \in \bigcup \mathcal{C}_{\delta_1}$ and $Y_f \in \bigcup \mathcal{C}_{\delta_2}$ with the uniform probability distributions over both sets. Then, for any $m\in M_{\delta_1} = \{k \in \mathbb{Z}^n : C_{\delta_1}(k) \in \mathcal{C}_{\delta_1} \}$, 
\begin{equation}
    \prob(\mathcal{M}^{\#}[L, Y_f]) = \prob(\mathcal{M}^{\#}[L, X_f] \;|\; X_f \in C_{\delta_1}(m)).
\end{equation}
\end{thmm}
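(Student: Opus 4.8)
The plan is to collapse both the left- and right-hand sides onto the law of $\mathcal{M}^{\#}$ for a fixed point drawn uniformly from a \emph{single} cube, and then to bridge the two grids through the scaling relation \eqref{scale_minimum}. The two invariance principles already established are exactly the tools needed: translation invariance of the cardinality, Eq.~\eqref{trans_minimum}, removes the dependence on which cube is used, while scaling invariance, Eq.~\eqref{scale_minimum}, connects $\delta_1$ to $\delta_2$; the independence of $L$ from the fixed point is what lets me transport these pathwise identities to the level of distributions.

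First I would record, via Eq.~\eqref{trans_minimum}, that for a fixed grid $\mathcal{G}^n(\delta)$ the law of $\mathcal{M}^{\#}[L, V]$ with $V$ uniform over a single $\delta$-cube is the same for every cube, since distinct cubes differ by a grid translation that preserves the cardinality; denote this common law by $R_\delta$. Then I apply Lemma~\ref{lemma_trans}. Because $X_f$ is uniform over the finite union $\bigcup \mathcal{C}_{\delta_1}$, conditioning on $\{X_f \in C_{\delta_1}(m)\}$ yields the uniform law on that one cube, and any two such conditionals are grid-translates of one another, so hypothesis \eqref{cond_equ} holds (vacuously for cubes of zero probability). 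Lemma~\ref{lemma_trans} then gives $\prob(\mathcal{M}^{\#}[L, X_f]) = \prob(\mathcal{M}^{\#}[L, X_f] \mid X_f \in C_{\delta_1}(m)) = R_{\delta_1}$ for every $m \in M_{\delta_1}$, so in particular the right-hand side of the theorem equals $R_{\delta_1}$. The identical argument applied to $Y_f$ over $\mathcal{C}_{\delta_2}$ gives $\prob(\mathcal{M}^{\#}[L, Y_f]) = R_{\delta_2}$. It therefore remains only to prove $R_{\delta_1} = R_{\delta_2}$.

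For this step I set $\alpha = \delta_2/\delta_1$ and invoke scaling. If $U$ is uniform on $C_{\delta_1}(m)$, then $\alpha U$ is uniform on $\alpha\, C_{\delta_1}(m) = C_{\delta_2}(m)$; and since $w = [L, x_f]$ is affine, the scaled map $\alpha w$ has the same linear part $L$ and fixed point $\alpha x_f$ (cf.~Eq.~\eqref{map_scale}), so the pair $[L, U]$ is carried to $[L, \alpha U]$ with $L$ untouched. By Eq.~\eqref{scale_minimum} the cardinality is preserved pathwise, $\mathcal{M}^{\#}[L, U] = \mathcal{M}^{\#}[L, \alpha U]$, so the two random variables share a law. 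As $L$ is independent of the fixed point and $\alpha U$ has precisely the uniform law on the single cube $C_{\delta_2}(m)$, the law of $\mathcal{M}^{\#}[L, \alpha U]$ is $R_{\delta_2}$, while the law of $\mathcal{M}^{\#}[L, U]$ is $R_{\delta_1}$. Hence $R_{\delta_1} = R_{\delta_2}$, and chaining the equalities of the previous paragraph yields $\prob(\mathcal{M}^{\#}[L, X_f] \mid X_f \in C_{\delta_1}(m)) = \prob(\mathcal{M}^{\#}[L, Y_f])$, as required.

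I expect the only delicate point to be the passage from the pathwise identity $\mathcal{M}^{\#}[L, U] = \mathcal{M}^{\#}[L, \alpha U]$ to an equality of \emph{distributions}. One must observe that $\mathcal{M}^{\#}$ is a deterministic functional of the pair (linear part, fixed point), that scaling fixes the linear part and acts as a measurable bijection $U \mapsto \alpha U$ carrying the uniform $\delta_1$-cube law to the uniform $\delta_2$-cube law, and that $L$ is independent of the fixed point throughout, so the joint laws transform consistently. Once this coupling is in place, both the translation reduction and the scaling bridge are direct citations of Eqs.~\eqref{trans_minimum} and \eqref{scale_minimum} together with Lemma~\ref{lemma_trans}, and no further computation is needed.
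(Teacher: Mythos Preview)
Your proposal is correct and follows essentially the same route as the paper: reduce to the law over a single cube via Lemma~\ref{lemma_trans} (translation invariance, Eq.~\eqref{trans_minimum}), then bridge the two grids via the scaling relation Eq.~\eqref{scale_minimum}. The only cosmetic difference is that the paper applies the lemma to $Y_f$ alone and then bridges the conditional on $C_{\delta_2}(m')$ directly to the conditional on $C_{\delta_1}(m)$ using a combined scale-and-translate (allowing $m\neq m'$), whereas you introduce the canonical law $R_\delta$, apply the lemma to both $X_f$ and $Y_f$, and bridge $R_{\delta_1}=R_{\delta_2}$ using pure scaling with the same index $m$; this is a slight streamlining but not a different idea.
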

\begin{proof}
Since $Y_f$ is distributed uniformly over $\bigcup \mathcal{C}_{\delta_2}$, it satisfies the premises of Lemma~\ref{lemma_trans} and from the lemma we get that for any $m' \in M_{\delta_2} = \{k' \in \mathbb{Z}^n : C_{\delta_2}(k') \in \mathcal{C}_{\delta_2} \}$, \begin{equation}\label{goal_8}
    \prob(\mathcal{M}^{\#}[L, Y_f]) = \prob(\mathcal{M}^{\#}[L, Y_f] \;|\; Y_f \in C_{\delta_2}(m')).
\end{equation}
Therefore, we have to show that for any $m\in M_{\delta_1}$ and $m'\in M_{\delta_2}$,
\begin{equation*}
    \prob(\mathcal{M}^{\#}[L, Y_f] \;|\; Y_f \in C_{\delta_2}(m')) = \prob(\mathcal{M}^{\#}[L, X_f] \;|\; X_f \in C_{\delta_1}(m)).
\end{equation*}

Let $\alpha = \delta_2 / \delta_1$. For any $m, m' \in \mathbb{Z}^n$ we have
\begin{equation*}
    C_{\delta_2}(m') = \alpha C_{\delta_1}(m) - \delta_2(m - m'),
\end{equation*}
Because $X_f$ and $Y_f$ are uniformly distributed on $\bigcup \mathcal{C}_{\delta_1}$ and $\bigcup \mathcal{C}_{\delta_2}$ respectively, as a consequence we get that for any $m \in M_{\delta_1}$ and $m' \in M_{\delta_2}$, 
\begin{equation}\label{XvsY}
    \prob(Y_f \;|\; Y_f \in C_{\delta_2}(m')) = \prob(\alpha X_f - \delta_2(m - m') \;|\; X_f \in C_{\delta_1}(m)). 
\end{equation}
In turn, from Eq.~\eqref{affine_fixed_point} for any $k \in \mathbb{Z}^n$ we have 
\begin{equation}\label{fixed_scaletrans}
    (\alpha w - \delta k)(x) = l(x - \alpha x_f + \delta k) + \alpha x_f - \delta k,
\end{equation}
so $\alpha w - \delta k$ is a contraction with the fixed point $\alpha x_f - \delta k$ and with the same linear part as $w$. Furthermore, Eqs.~\eqref{trans_minimum} and \eqref{scale_minimum} imply that
\begin{equation*}
    \mathcal{M}[\widetilde{\alpha w - \delta_2 k}]_{\delta_2} = \mathcal{M}[\widetilde{\alpha w}]_{\delta_2} - \delta_2 k = \alpha \mathcal{M}[\tilde{w}]_{\delta_1} - \delta_2 k,
\end{equation*}
and hence
\begin{equation}\label{card_scaletrans}
    \left|\mathcal{M}[\widetilde{\alpha w - \delta_2 k}]_{\delta_2}\right| = \left|\mathcal{M}[\tilde{w}]_{\delta_1} \right|.
\end{equation}
Putting $k = m - m'$, from Eqs.~\eqref{fixed_scaletrans} and \eqref{card_scaletrans} we conclude that $\mathcal{M}^{\#}[L, X_f] = \mathcal{M}^{\#}[L, \alpha X_f - \delta_2 (m - m')]$, and thus for any $m\in M_{\delta_1}$ and $m'\in M_{\delta_2}$, 
\begin{equation*}
\begin{split}
    \prob(\mathcal{M}^{\#}[L, X_f] \;|\; X_f \in C_{\delta_1}(m)) &= \prob(\mathcal{M}^{\#}[L, \alpha X_f - \delta_2 (m - m')]  \;|\; X_f \in C_{\delta_1}(m))\\
    &= \prob(\mathcal{M}^{\#}[L, Y_f] \;|\; X_f \in C_{\delta_2}(m')),
\end{split}
\end{equation*}
where the second equality follows from Eq.~\eqref{XvsY}. 
\end{proof}
\medskip
It is easy to see that the above lemma and theorem also hold when the random variables $\mathcal{M}^{\#}[L,.]$, the cardinalities of minimal absorbing sets, will be replaced with the random variables defined by
\begin{equation*}
    \mathcal{M}_{\#}[L, .] := \textit{the number of components of } \mathcal{M}[L, .]. 
\end{equation*}After the replacement, the proofs of the lemma and theorem remain the same. 

Obviously, the distributions of $\mathcal{M}^{\#}[L, X_f]$ and $\mathcal{M}_{\#}[L, X_f]$ are also related to the distribution of $L$, the random variable for the matrix describing the linear part of an affine contraction. Depending on underlying assumptions, various distributions can be used for this goal. In the numerical experiments presented below, we assumed, just like for fixed points, that the random variable $L$ is distributed uniformly, that is, we posited that each linear contraction has an equal chance to appear. A distribution can be spread over linear contractions on $(\Real^n, d_E)$ by taking advantage of the singular value decomposition $L = U \diag(\lambda_1, \dots, \lambda_n) V$, where $U, V \in \mathrm{O}(n)$, orthogonal matrices of size $n\times n$, and $\diag(\lambda_1, \dots, \lambda_n)$ is a diagonal matrix of singular values\footnote{The contractivity factor $\lambda$ of $L$ with respect to $d_E$ (and, thus, the contractivity factor of an affine mapping $w(.) = L(.) + t$) is equal to the maximum of the singular values, $\lambda = \max_i \lambda_i$.}. This opens the door to constructing a distribution over linear contractions with respect to scaling factors $\lambda_i$ and some parameters related to transformations of rotations and reflections represented by matrices $U$ and $V$. 

In our experiments for affine contractions on $(\Real^2, d_E)$, we generated uniformly distributed random linear contractions using the formula $L = U(\alpha) \diag(\lambda_1, \lambda_2) V(\beta)$, where $U(\alpha), V(\beta) \in \mathrm{SO}(2)$ were rotation matrices parametrized with i.i.d. random angles $\alpha, \beta \sim \mathcal{U}[0, 2\pi)$, and $\lambda_1, \lambda_2 \sim \mathcal{U}(-1, 1)$ were i.i.d. random scaling factors potentially accompanied by random reflections. In turn, random fixed points were generated by uniform sampling $\delta$-cube $C_1(\mathbf{0})$, that is, $X_f \sim \mathcal{U}[-\tfrac{1}{2}, \tfrac{1}{2})^2$. Due to Theorem \ref{sampling_thm}, such a method of sampling generates samples of $[L, X_f]$ which are representative for all distributions of $\mathcal{M}^{\#}[L, X_f]$ (and $\mathcal{M}_{\#}[L, X_f]$) related to affine contractions with $X_f$ distributed uniformly over any finite union of $\delta$-cubes and regardless of the value of $\delta > 0$. In Fig.~\ref{fig:3} and \ref{fig:5} we present plots related to conditional probabilities and conditional expected values of the random variables $\mathcal{M}^{\#}[L, X_f]$ and $\mathcal{M}_{\#}[L, X_f]$. Each value in the plots was computed based on $2\cdot 10^4$ samples drawn from a given conditional distribution.

\begin{figure}[t]
\centering
\begin{subfigure}{.5\textwidth}
  \centering
  \includegraphics[width=1\linewidth]{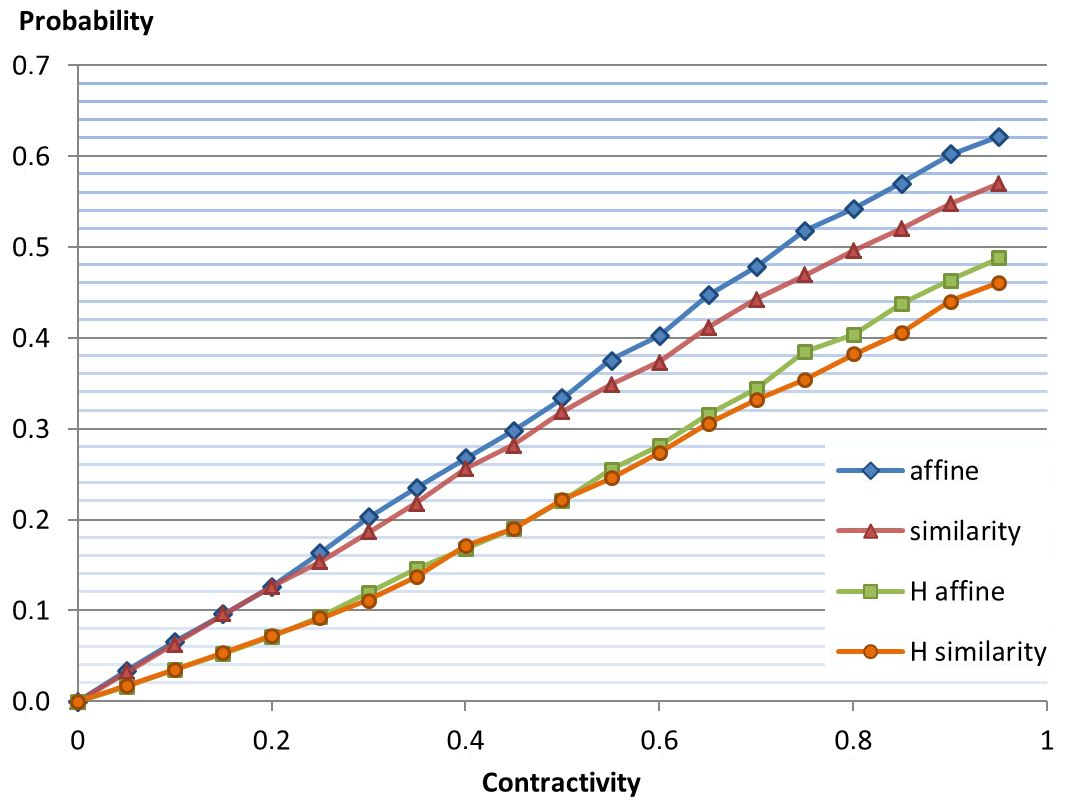}
  \caption{}
  \label{fig:3a}
\end{subfigure}%
\begin{subfigure}{.5\textwidth}
  \centering
  \includegraphics[width=1\linewidth]{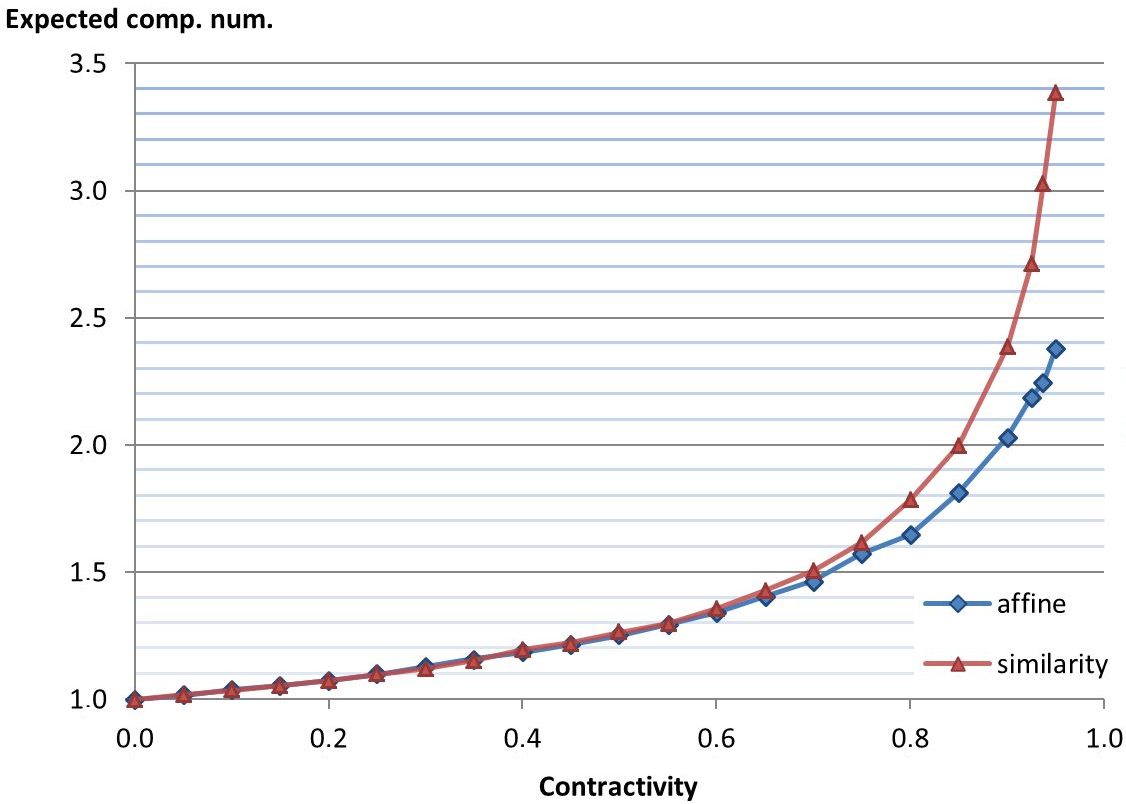}
  \caption{}
  \label{fig:3b}
\end{subfigure}
\caption{Statistics related to a minimal absorbing set of a $\delta$-roundoff of a 2D affine contraction with respect to the contractivity factor less than a given value (see the main text): (a) the probability for the minimal absorbing set generated by an affine (blue) and similarity (red) contraction to be a non-singleton and to have more than one component (green and orange plots, respectively); (b) the expected value of the number of the a minimal absorbing set components}\label{fig:3}
\end{figure}

\begin{figure}[t]
\centering
\begin{subfigure}{.5\textwidth}
  \centering
  \includegraphics[width=1\linewidth]{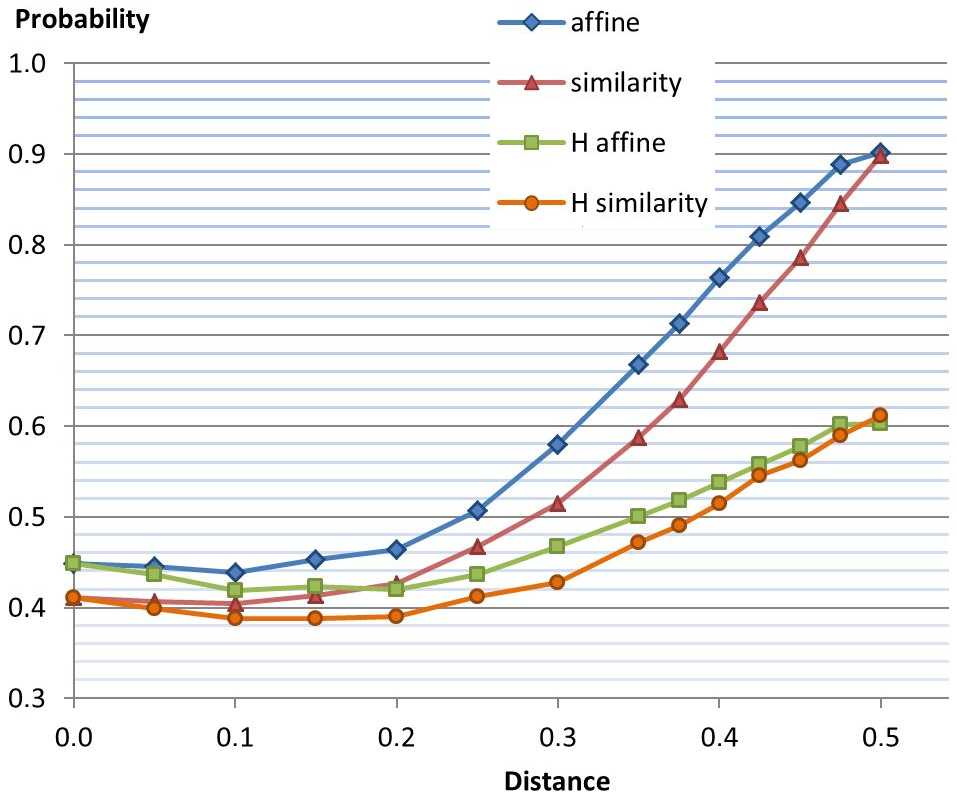}
  \caption{}
  \label{fig:5a}
\end{subfigure}%
\begin{subfigure}{.5\textwidth}
  \centering
  \includegraphics[width=1\linewidth]{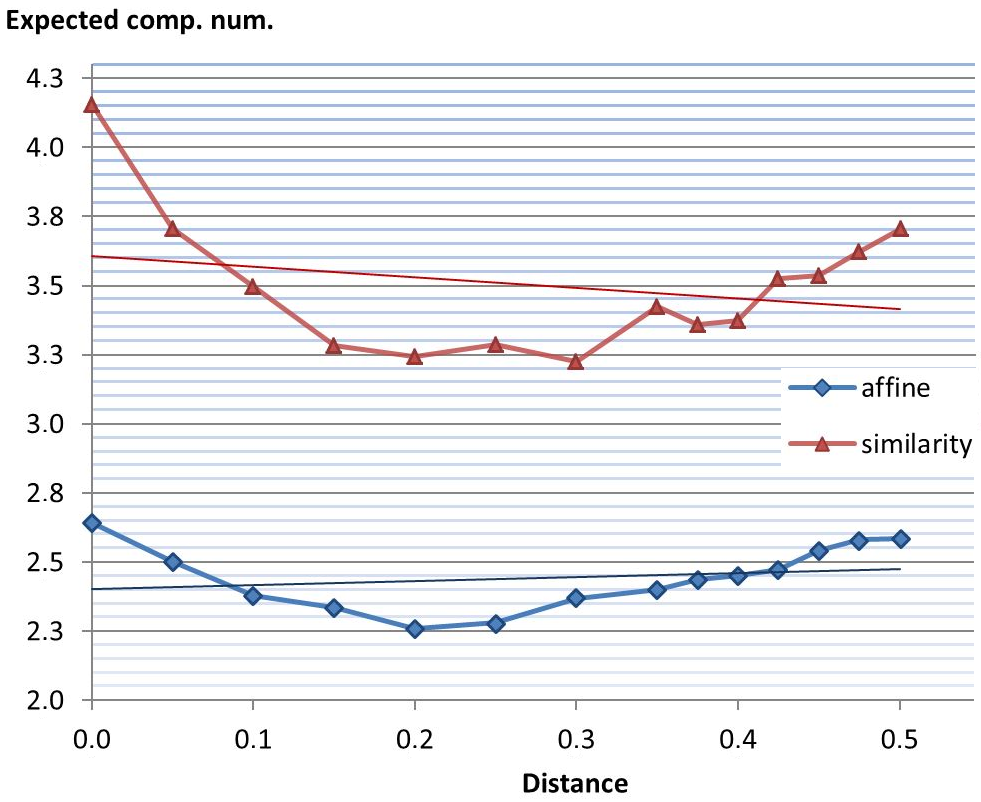}
  \caption{}
  \label{fig:5b}
\end{subfigure}
\caption{Same as in Fig.~\ref{fig:3}, but with respect to the $d_{\infty}$ distance of the fixed point of a mapping to the center of the $\delta$-cube (see the main text). In (b) additionally linear trends are shown}\label{fig:5}
\end{figure}

Fig.~\ref{fig:3a} shows plots of the conditional probabilities for an affine contraction and contractive similarity to lose contractivity and to have more than one minimal absorbing set component, with respect to a parameter $s \in [0, 1)$ that determines the upper bound for the contractivity factors of the mappings. The plots depict the following conditional probabilities: for affine contractions, in blue $\prob(\mathcal{M}^{\#}[L,X_f] > 1 | \max(\lambda_1,\lambda_2) \leq s)$ and in green  $\prob(\mathcal{M}_{\#}[L,X_f] > 1 | \max(\lambda_1,\lambda_2) \leq s)$; for contractive similarities, in red $\prob(\mathcal{M}^{\#}[L,X_f] > 1 | \lambda_1 = \lambda_2 \leq s)$ and in orange $\prob(\mathcal{M}_{\#}[L,X_f] > 1 | \lambda_1 = \lambda_2 \leq s)$. It is clearly seen that the probability of losing the contractive property by a mapping due to discretization is nearly proportional to the contractivity factor, and reaches the value about $0.6$ for the maps with Lipschitz constants not exceeding the value of $0.95$, both for affine and similarity contractions. Moreover, the probability is greater than half for affine contractions with contractvity factors in the range of $[0, 0.75]$. First of all, this confirms the statement we made at the beginning of this section that the probability of losing contractivity by an affine contraction on $(\Real^n, d_E)$ due discretization is higher than the probability that the map stays contractive. Secondly, according to the definition of a \emph{hyperbolic} IFS, all its maps are assumed to be contractions, and hence, in light of the presented results, we can conclude that the probability of losing the hyperbolic property by an affine IFS on $(\Real^2, d_E)$ due discretization grows exponentially with the number $N$ of IFS mappings as $1 - 0.4^N$ or so. 
As one can see in the figure, similar nearly linear plots were obtained for the probability that the number of components of a minimal absorbing set is greater than one (the green and orange plots). As shown in the next section this condition is essential for the number of attractive invariant sets that a discrete IFS may generate. In turn, Fig.~\ref{fig:3b} shows plots of conditional expected values of the number of components of minimal absorbing sets for $\delta$-roundoffs of affine and similarity contractions with contractivity factors in the parametrized range $[0, s)$, $s \in [0, 1)$, that is, the plots present the values of  $\mathbf{E}(\mathcal{M}_{\#}[L,X_f] | \max(\lambda_1,\lambda_2) \leq s)$ and $\mathbf{E}(\mathcal{M}_{\#}[L,X_f] | \lambda_1 = \lambda_2 \leq s)$. The first thing to note is a rapid increase of the functions that begins about the value of $0.65$ of the contractivity upper bound. 
The second thing is that the increase is more rapid for similarities than affine contractions. 

In Fig.~\ref{fig:5} we present plots which depict the values being counterparts of the previous ones but this time conditioning was with respect to the $d_{\infty}$ distance of the fixed point of a mapping to the center of a $\delta$-cube. That is, Fig.~\ref{fig:5a} shows the following conditional probabilities: for affine contractions, in blue $\prob(\mathcal{M}^{\#}[L,X_f] > 1 | d_{\infty}(X_f, \mathbf{0}) = d)$ and in green $\prob(\mathcal{M}_{\#}[L,X_f] > 1 | d_{\infty}(X_f, \mathbf{0}) = d)$; for contractive similarities, in red $\prob(\mathcal{M}^{\#}[L,X_f] > 1 | d_{\infty}(X_f, \mathbf{0}) = d, \; \lambda_1 = \lambda_2)$ and in orange $\prob(\mathcal{M}_{\#}[L,X_f] > 1 | d_{\infty}(X_f, \mathbf{0}) = d, \; \lambda_1 = \lambda_2)$, where $d \in [0, 0.5]$ represents a distance-to-center parameter. In addition, for numerical stability reasons, in computation we restrict the possible values of contractivity factors of sampled contractions to the range $[0, 0.95]$. In the plots it is clearly seen that, as of the value of $0.2$ of the distance from the center, all probabilities start to grow more rapidly. Another interesting thing, perceivable in all four plots, is that the probabilities seem to decrease in the range $[0, 0.1]$ of the distance. Some other manifestations of these facts are also seen in Fig.~\ref{fig:5b} in which we present the expected values of the number of minimal absorbing set components for affine and similarity contractions, conditioned with the distance, that is, the plots of $\mathbf{E}(\mathcal{M}_{\#}[L,X_f] | d_{\infty}(X_f, \mathbf{0}) = d)$ and  $\mathbf{E}(\mathcal{M}_{\#}[L,X_f] | d_{\infty}(X_f, \mathbf{0}) = d, \; \lambda_1 = \lambda_2)$, respectively. According to the plots, on average the largest number of components is generated by contractions with fixed points coincident with the center of a $\delta$-cube, and that this number decreases as fixed points move away from the center up to the value $0.2$ of the distance. It is also clearly visible that similarity contractions generate on average almost two times as many components as affine contractions---an additional aspect of the fact we have already noticed while discussing Fig.~\ref{fig:3b}. And last but not least, it is interesting that the average number of components is nearly constant---it fluctuates only within the limits of one compound for similarities and half of a compound for affine contractions, and relatively small---the linear trend is about the value of $3.5$ for similarites and about the value of $2.4$ for affine contractions. 

\section{Discrete Iterated Function System}\label{Sec_DIFS}
In this section we will study invariant sets and measures that arise from iterated function systems acting on a discrete space $\mathcal{D}^n(\delta)$. We will define a discrete IFS in a place-dependent probability version, that is, the probabilities assigned to IFS mappings are real-valued functions rather than constant numbers.
\medskip
\begin{definition}\label{def_DIFS}
Let $S$ be a subset of $\mathcal{D}^n(\delta)$ endowed with a metric $d$. Let $\{ \tilde{w}_i\}_{i=1}^N$ be a finite set of maps $\tilde{w}_i : S \rightarrow S$. In addition, associate with each map $\tilde{w}_i$ a function $p_i : S \rightarrow (0, 1]$ of place-dependent, positive probability weights such that $\sum_{i=1}^N p_i(\tilde{x}) = 1$ for every $\tilde{x} \in S$. We will refer to the set $\{ S; \tilde{w}_1, \dots, \tilde{w}_N; p_1,\dots, p_N\}$ as a \emph{discrete iterated function system with place-dependent probabilities} (DIFS for short). 
\end{definition}
\medskip
Hereafter, unless otherwise stated, when speaking about a DIFS we will mean a discrete iteration function system with place-dependent probabilities, whereas an IFS will always appear in the constant probabilities and, moreover, hyperbolic version in this paper. Also note that in the definition above we do not impose any metric properties on the DIFS mappings, in particular we do not require them to be contractions. For now, they are just mappings on a subset $S$ of a discrete space $\mathcal{D}^n(\delta)$, and the only metric properties they own are those that follow from the properties of $S$ itself, namely, the mappings are continuous (with respect to any metric, since $S$ is discrete) and if $S$ was additionally bounded, they would be Lipschitz.   

The random iteration algorithm (RIA) in the version for a discrete IFS with place-dependent probabilities works as follows: Choose a point $\tilde{x}_0 \in S$ and generate a random orbit $\{\tilde{x}_k = \tilde{X}_k(\tilde{x}_0)\}_{k=0}^\infty$ by recursively setting  $\tilde{X}_k(\tilde{x}_0) := \tilde{w}_{I_k}(\tilde{X}_{k-1}(\tilde{x}_0))$, where $\tilde{X}_0(\tilde{x}_0) = \tilde{x}_0$ and each mapping index $I_k$ is drawn from the probability distribution on $\{1, \dots, N \}$,  specified by the values of the $N$ probability weight functions at the "place" $\tilde{x}_{k-1} = \tilde{X}_{k-1}(\tilde{x}_0)$, that is, the distribution is $[p_1(\tilde{x}_{k-1}), \dots, p_N(\tilde{x}_{k-1})]$. In a special case when the probability functions $p_i$ are constant over $S$, and hence the random variables $I_k$ are i.i.d. with a distribution independent of place, the DIFS with place-dependent probabilities reduces to a discrete IFS with constant probabilities. 

On the basis of above, RIA can be considered in terms of a collection of random variables $\{\tilde{X}_k : k \geq 0\}$ that forms a stochastic process on a countable set $S$, where $\tilde{X}_0$ is distributed with a point mass distribution concentrated at $\tilde{x}_0$ (i.e., the Dirac measure $\delta_{\tilde{x}_0}$).
By the recursive definition of the random variables $\tilde{X}_k$ we get that, for any $k \geq 0$, $\tilde{X}_k$ depends only on $\tilde{X}_{k-1}$, that is, for any $\tilde{x}, \tilde{y} \in C$ and $k \geq 0$,
\begin{equation*}
    \prob(\tilde{X}_k = \tilde{y} \;|\; \tilde{X}_{k-1} = \tilde{x}, \dots, \tilde{X}_{1}, \tilde{X}_{0}) = \prob(\tilde{X}_k = \tilde{y} \;|\; \tilde{X}_{k-1} = \tilde{x}),
\end{equation*} 
and further that
\begin{equation}\label{prob_entry}
    \prob(\tilde{X}_k = \tilde{y} \;|\; \tilde{X}_{k-1} = \tilde{x}) = \sum_{i=1}^N p_i(\tilde{x}) \;\mathbf{1}_{\{\tilde{y}\}}(\tilde{w}_i(\tilde{x})),
\end{equation}
where $\mathbf{1}_A(.)$ denotes the indicator function of a set $A$. Therefore, the stochastic process is a (time-homogeneous) Markov chain with the transition matrix $P$ whose entries $(P)_{\tilde{x}\tilde{y}} = \prob(\tilde{X}_k = \tilde{y} \;|\; \tilde{X}_{k-1} = \tilde{x})$ are specified by Eq.~\eqref{prob_entry}. 

In the following definition and theorem we gather some basic facts and notions pertaining to Markov chains that we will use further on (for more rigorous treatment see e.g. \cite{Serf09,Stro05}). 
\medskip
\begin{definition}\label{Markov_def}
Let $\{\tilde{X}_k\}$ be a Markov chain on a countable set $S$, called the \emph{state space} of the chain, and a transition matrix $P$.\\
\emph{(a)} A state $\tilde{y}$ is said to be \emph{accessible} from a state $\tilde{x}$ if there is a non-zero probability of reaching $\tilde{y}$ in a finite number of steps when starting from $\tilde{x}$:
\begin{equation*}
\exists k \in \Natural,\; \prob(\tilde{X}_k = \tilde{y} \;|\; \tilde{X}_0 = \tilde{x}) = (P^k)_{\tilde{x}\tilde{y}} > 0. 
\end{equation*}
If $\tilde{x}$ and $\tilde{y}$ are accessible from each other, then they are said to \emph{communicate}. 
\medskip
\newline
\emph{(b)} A Markov chain is called \emph{irreducible} if any couple of its states communicate.  
\medskip
\newline
\emph{(c)} A maximum subset $C$ of $S$ such that any two states in $C$ communicate forms a \emph{communication class}. A subset $C$ is a \emph{closed class} if the only states which are accessible from the states in $C$ are those in $C$ (i.e., if the chain starts in $C$, then it will stay in $C$ with probability $1$). Therefore, the Markov chain on any of its closed communication classes can be viewed as irreducible. 
\medskip
\newline
\emph{(d)} A state $\tilde{x}$ is \emph{recurrent} if, with probability $1$, the chain starting in $\tilde{x}$ returns to $\tilde{x}$ in a finite number of steps. A recurrent state is \emph{positive recurrent} if the expected number of steps to return is finite. A state that is not recurrent is called \emph{transient}.  
\end{definition}
\medskip
\begin{thmm}\label{Markov_thm}
The state space $S$ of a Markov chain uniquely decomposes into a countable union of disjoint subsets:
\begin{equation*}
    S = \mathcal{T} \cup \mathcal{A}_1 \cup \mathcal{A}_2 \dots 
\end{equation*}
where $\mathcal{T}$ is the set of all transient states, and $\mathcal{A}_k$ are closed communication classes of the recurrent states. Moreover, if $S$ is finite, then the union 
\begin{equation}\label{recur_states}
\mathcal{A} = \bigcup_{k\leq | S|} \mathcal{A}_k 
\end{equation}
is nonempty and consists of positive recurrent states. In addition, $\mathcal{A}$ is reached by the Markov chain in a finite number of steps with probability one and independently of the initial state.
\end{thmm}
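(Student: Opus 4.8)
The plan is to obtain the decomposition from the communication relation and then handle the finite case by separate counting and total-time arguments. First I would note that the relation ``communicate'' of Definition~\ref{Markov_def}(a), taken with the standard convention that each state communicates with itself, is reflexive, symmetric and transitive, hence an equivalence relation on $S$; its equivalence classes are exactly the communication classes of part~(c). Since $S$ is countable there are at most countably many such classes, and the uniqueness asserted in the theorem is then automatic, a partition into equivalence classes being unique and the lumping of all transient states into a single set $\mathcal{T}$ being canonical. What remains for the first assertion is to justify the labelling: that each class consists either entirely of recurrent or entirely of transient states, and that every recurrent class is closed in the sense of part~(c).

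I expect these last two facts to be the main obstacle, since they carry the genuine probabilistic content. To see that a recurrent class is closed, suppose $\tilde{x}$ is recurrent and $\tilde{y}$ is accessible from $\tilde{x}$; then $\tilde{x}$ must be accessible from $\tilde{y}$, for otherwise there would be a path from $\tilde{x}$ to $\tilde{y}$ of positive probability after which return to $\tilde{x}$ is impossible, contradicting recurrence of $\tilde{x}$. Hence no state outside the class of $\tilde{x}$ is accessible from it, which is precisely closedness. For the class property of recurrence I would invoke the expected-number-of-visits criterion, namely that $\tilde{x}$ is recurrent iff $\sum_k (P^k)_{\tilde{x}\tilde{x}} = \infty$: if $\tilde{x}$ and $\tilde{y}$ communicate, pick $a,b$ with $(P^a)_{\tilde{y}\tilde{x}} > 0$ and $(P^b)_{\tilde{x}\tilde{y}} > 0$, and the Chapman--Kolmogorov bound $(P^{a+k+b})_{\tilde{y}\tilde{y}} \ge (P^a)_{\tilde{y}\tilde{x}} (P^k)_{\tilde{x}\tilde{x}} (P^b)_{\tilde{x}\tilde{y}}$ transfers divergence of the series from $\tilde{x}$ to $\tilde{y}$. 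With recurrence (hence transience) constant on each class, the split $S = \mathcal{T} \cup \mathcal{A}_1 \cup \mathcal{A}_2 \cup \cdots$ with the $\mathcal{A}_k$ the closed recurrent classes is established.

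For the finite case I would first argue that $\mathcal{A}$ is nonempty by a total-time argument: each transient state is almost surely visited only finitely often, so if every state were transient the chain would, $S$ being finite, spend only finitely much time in $S$ in total, which is absurd since it runs forever. Thus at least one recurrent class exists, and since the classes are disjoint nonempty subsets of a set of cardinality $|S|$ there are at most $|S|$ of them, so the union in~\eqref{recur_states} already collects all of them. Positive recurrence follows because within a finite closed class $\mathcal{A}_k$ any fixed state is reached from any other in at most $|\mathcal{A}_k|$ steps with a probability bounded below by some $\varepsilon > 0$; the return time then has a geometrically decaying tail and hence finite expectation.

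Finally, for the absorption statement I would use that $\mathcal{T}$, being a subset of the finite set $S$, is finite, and that each of its states is almost surely visited only finitely often; a finite union of such events still occurs only finitely often, so almost surely there is a last time at which the chain lies in $\mathcal{T}$, after which it has entered $\mathcal{A}$. As this argument is insensitive to the initial state, the hitting time of $\mathcal{A}$ is finite with probability one regardless of where the chain starts, which completes the proof.
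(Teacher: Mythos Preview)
Your proof is correct and follows the standard textbook route for these classical facts. Note, however, that the paper does not actually prove this theorem: it is stated as a collection of well-known results from Markov chain theory, with the reader referred to \cite{Serf09,Stro05} for details. So there is no ``paper's own proof'' to compare against; what you have written is essentially the argument one would find in those references, and it is sound.
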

\medskip
Since all states in each set $\mathcal{A}_k$ are either recurrent or positive recurrent, the classes are called \emph{recurrent communication classes} and \emph{positive recurrent communication classes}, respectively\footnote{It is a basic fact of the theory of Markov chains that if a recurrent communication class includes a positive recurrent state, then all states of the class are positive recurrent, so the class is a positive recurrent communication class. Therefore the state space of an irreducible Markov chain consists only of states of the same type: either recurrent or positive recurrent or transient.}. Similarly, a chain on a recurrent communication class and, respectively, positive recurrent communication class is called an \emph{irreducible recurrent chain} and an \emph{irreducible positive recurrent chain}, respectively. 

The next theorem provides the view of closed and communication classes subjected to the action of the Hutchinson operator $\widetilde{W}(.) := \bigcup_{i = 1}^N \tilde{w}_i(.)$ associated with a DIFS:
\medskip
\begin{thmm}\label{Markov2Hutchinson}
Let $\{ S; \tilde{w}_1, \dots, \tilde{w}_N; p_1,\dots, p_N\}$ be a DIFS, and let $\{\tilde{X}_k\}$ be the associated Markov chain. 

\noindent\emph{(a)} $C$ is a closed class of $\{\tilde{X}_k\}$ if and only if $\bigcup_{i=1}^N \tilde{w}_i(C) \subset C$. 

\noindent\emph{(b)} If $C$ is a communication class of $\{\tilde{X}_k\}$, then $\bigcup_{i=1}^N \tilde{w}_i(C) \supset C$.
\end{thmm}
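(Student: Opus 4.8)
The entire theorem rests on a single dictionary between the Markov-chain vocabulary of Definition~\ref{Markov_def} and the set dynamics of the Hutchinson operator $\widetilde{W}$. Since every place-dependent probability is strictly positive, $p_i(\tilde{x}) > 0$, Eq.~\eqref{prob_entry} shows that $(P)_{\tilde{x}\tilde{y}} > 0$ holds precisely when $\tilde{y} = \tilde{w}_i(\tilde{x})$ for some $i$, that is, precisely when $\tilde{y} \in \widetilde{W}(\{\tilde{x}\})$. Iterating, a state $\tilde{y}$ is accessible from $\tilde{x}$ (necessarily in $k \geq 1$ steps, since $\Natural = \{1, 2, \dots\}$) if and only if $\tilde{y} = \tilde{w}_{i_k}\circ\cdots\circ\tilde{w}_{i_1}(\tilde{x})$ for some index string. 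The plan is to record this equivalence first as a standalone observation and then read both parts off it.

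For part (a), the forward implication is immediate: if $C$ is closed, then for every $\tilde{x} \in C$ and every $i$ the point $\tilde{w}_i(\tilde{x})$ is reachable in one step and hence lies in $C$, so $\widetilde{W}(C) \subset C$. For the converse I would argue by induction along a path. Assuming $\widetilde{W}(C) \subset C$, any $\tilde{y}$ accessible from some $\tilde{x} \in C$ sits at the end of a chain $\tilde{x} = \tilde{z}_0 \to \tilde{z}_1 \to \cdots \to \tilde{z}_k = \tilde{y}$ in which each $\tilde{z}_{j+1} = \tilde{w}_{i_{j+1}}(\tilde{z}_j)$; starting from $\tilde{z}_0 \in C$ and applying $\widetilde{W}(C) \subset C$ at each step forces $\tilde{z}_k = \tilde{y} \in C$. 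Thus no state outside $C$ is accessible from $C$, which is exactly closedness.

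Part (b) is where the delicate point lies. Fix a communication class $C$ and any $\tilde{y} \in C$; I want a predecessor of $\tilde{y}$ inside $C$, i.e.\ some $\tilde{x} \in C$ with $\tilde{y} = \tilde{w}_i(\tilde{x})$. The key is that, under the convention $\Natural = \{1, 2, \dots\}$, membership of $\tilde{y}$ in a communication class forces $\tilde{y}$ to communicate with itself, so there is a genuine return cycle $\tilde{y} = \tilde{z}_0 \to \tilde{z}_1 \to \cdots \to \tilde{z}_k = \tilde{y}$ with $k \geq 1$. Its last step exhibits $\tilde{y} = \tilde{w}_i(\tilde{z}_{k-1})$, and it remains to check $\tilde{z}_{k-1} \in C$: if $k = 1$ then $\tilde{z}_{k-1} = \tilde{y}$ (a one-step self-loop), while if $k \geq 2$ the cycle shows $\tilde{y} \to \tilde{z}_{k-1}$ in $k-1 \geq 1$ steps and $\tilde{z}_{k-1} \to \tilde{y}$ in one step, so $\tilde{z}_{k-1}$ communicates with $\tilde{y}$ and therefore belongs to $C$. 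Either way $\tilde{y} \in \widetilde{W}(C)$, giving $C \subset \widetilde{W}(C)$.

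I expect the main obstacle to be exactly this reliance on self-communication in part (b): the claim can fail under a $k = 0$ convention for accessibility, since a transient singleton $\{\tilde{y}\}$ with no self-loop would then count as a communication class yet need not satisfy $\tilde{y} \in \widetilde{W}(\{\tilde{y}\})$. The proof must therefore make explicit that accessibility is counted in at least one step, so that every element of a communication class lies on a nontrivial cycle that supplies the required in-class predecessor. Everything else reduces to the positive-probability dictionary together with a routine path induction.
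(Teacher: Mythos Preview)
Your proof is correct and follows essentially the same route as the paper's: part (a) is argued identically, and in part (b) both you and the paper locate an in-class predecessor of $\tilde{y}$ by using accessibility within the communication class. Your treatment is in fact more careful than the paper's terse version, since you make explicit why the penultimate state $\tilde{z}_{k-1}$ of the return cycle lies in $C$ and you flag the dependence on the convention $k \in \Natural = \{1,2,\dots\}$ for accessibility; the paper silently relies on both points.
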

\begin{proof}
(a) Suppose $C$ is a closed class, so for every $i\in \{1,\dots, N \}$, $\tilde{w}_i(C) \subset C$. Hence $\bigcup_{i=1}^N \tilde{w}_i(C) \subset C$. Now, suppose that $\bigcup_{i=1}^N \tilde{w}_i(C) \subset C$. The mappings $w_i$ map $C$ into itself, so for any $\tilde{x} \in C$ and every finite sequence $i_1,\dots, i_m$ of indices from $\{1, \dots, N\}$, we have $w_{i_m}\circ\dots\circ w_{i_1}(\tilde{x}) \in C$. Therefore, if $\tilde{y} \notin C$, then $\tilde{y}$ is not accessible from $C$, and thus $C$ is a closed class. (b) Let $\tilde{x} \in C$, where $C$ is a communication class. Then $\tilde{x}$ is accessible from any state in $C$, and hence there exist at least one $\tilde{y} \in C$ and $i\in \{1,\dots, N \}$ such that $\tilde{w}_i(\tilde{y}) = \tilde{x}$. Hence, $\tilde{x} \in \tilde{w}_i(C)$, and as a consequence $\bigcup_{i=1}^N \tilde{w}_i(C) \supset C$. 
\end{proof}

Theorem \ref{Markov_thm} plays a similar role for DIFS as Banach's fixed point theorem for hyperbolic IFSs. In its first part it states that the presence of a recurrent state in a Markov chain implies that the chain possesses at least one set $\mathcal{A}$, which in light of Theorem \ref{Markov2Hutchinson} is an invariant set of the DIFS Hutchinson operator. The second part provides a sufficient condition for the existence of a recurrent state, and thus also the set $\mathcal{A}$, and moreover it asserts that the set attracts orbits of the chain, in the sense that, with probability one, an orbit eventually visits and stays in $\mathcal{A}$, regardless of the orbit's intial point. Clearly, the properties of $\mathcal{A}$: the (forward) invariance under the DIFS Hutchison operator and the almost sure orbit attraction (if present) very much resemble the behavior of a hyperbolic IFS attractor. The key difference is that, in general, $\mathcal{A}$ decomposes into a countable collection of disjoint subsets $\mathcal{A}_k$, each of which is a fixed point of the DIFS Hutchinson operator and may act as an attractor within its own basin of attraction. This is a different situation from the one of a hyperbolic IFS in that, by Banach's fixed point theorem, the latter always possesses a single attractor being a unique fixed point of the associated Hutchison operator. 
\medskip
\begin{remark}
We have already had a glimpse of recurrent communication classes of Markov chains in Sec.~\ref{sec_minabset}. Indeed, the components of a minimum absorbing set can be treated as recurrent communication classes of a trivial Markov chain that is generated by a single map (and hence the probability assigned to the map is $1$) in a discrete space or its subset. Thereby, we have an example of a DIFS that consists of a single map and can possesses multiple attractors, even though---as we have seen in Sec.~\ref{sec_minabset}---the map is a $\delta$-roundoff of a contraction and, thus, the DIFS itself is a discretized version of a hyperbolic IFS.     
\end{remark}
\medskip
At this point, a natural question arises about upper bounds for the number of such invariant sets that a DIFS may possess.
\medskip
\begin{lem}\label{rec2comp}
Let $\{ S; \tilde{w}_1, \dots, \tilde{w}_N; p_1,\dots, p_N\}$ be a DIFS. Suppose that for at least one DIFS mapping $\tilde{w}_i$ there exists a minimal absorbing set $\mathcal{M}[\tilde{w}_i, S]$. Let $\mathcal{A}_k$ be a recurrent communication class of the associated Markov chain $\{ \tilde{X}_k\}$. If there exists $\tilde{x} \in S$ such that both $\tilde{x} \in \mathcal{A}_k$ and $\tilde{x} \in \mathcal{B}[\mathcal{M}_j[\tilde{w}_i, S]]$, then $\mathcal{A}_k \supset \mathcal{M}_j[\tilde{w}_i, S]$.
\end{lem}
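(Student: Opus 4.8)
The plan is to translate the deterministic dynamics of the single map $\tilde{w}_i$ into statements about accessibility in the associated Markov chain, exploiting the strict positivity of the probability weights. The key observation is that, by Definition~\ref{def_DIFS}, $p_i(\tilde{x}) > 0$ for every $\tilde{x} \in S$, so by Eq.~\eqref{prob_entry} the one-step transition probability from any $\tilde{x}$ to $\tilde{w}_i(\tilde{x})$ is bounded below by $p_i(\tilde{x}) > 0$. Consequently, for every $\ell \in \Natural$ the point $\tilde{w}_i^{\circ \ell}(\tilde{x})$ is accessible from $\tilde{x}$, because the path $\tilde{x} \to \tilde{w}_i(\tilde{x}) \to \dots \to \tilde{w}_i^{\circ \ell}(\tilde{x})$ carries positive probability as a product of positive one-step probabilities. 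In short, every forward orbit of $\tilde{w}_i$ is a chain of accessible states.

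First I would use the hypothesis $\tilde{x} \in \mathcal{B}[\mathcal{M}_j[\tilde{w}_i, S]]$. By the definition of the basin of attraction (Definition~\ref{minabsorbing_def}), there is an $\ell \in \Natural$ with $\tilde{z} := \tilde{w}_i^{\circ \ell}(\tilde{x}) \in \mathcal{M}_j[\tilde{w}_i, S]$, and by the accessibility remark above $\tilde{z}$ is accessible from $\tilde{x}$. Since $\tilde{x} \in \mathcal{A}_k$ and, being a recurrent communication class, $\mathcal{A}_k$ is closed (Theorem~\ref{Markov_thm}), the defining property of a closed class (Definition~\ref{Markov_def}(c)) forces every state accessible from $\tilde{x}$ to lie in $\mathcal{A}_k$; hence $\tilde{z} \in \mathcal{A}_k \cap \mathcal{M}_j[\tilde{w}_i, S]$.

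Finally I would propagate membership from the single point $\tilde{z}$ to the whole component. By Corollary~\ref{component_periodic}, the component $\mathcal{M}_j[\tilde{w}_i, S]$ is exactly one periodic orbit of $\tilde{w}_i$, so each of its (finitely many) points has the form $\tilde{w}_i^{\circ m}(\tilde{z})$ and is therefore accessible from $\tilde{z}$. Applying the closedness of $\mathcal{A}_k$ once more, now starting from $\tilde{z} \in \mathcal{A}_k$, all these points belong to $\mathcal{A}_k$, which gives $\mathcal{A}_k \supset \mathcal{M}_j[\tilde{w}_i, S]$, as claimed.

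The argument becomes essentially mechanical once the bridge between deterministic iteration of $\tilde{w}_i$ and chain accessibility is in place; the only delicate point is the repeated appeal to the closedness of the recurrent class $\mathcal{A}_k$, which is precisely what prevents accessibility from escaping the class. I expect no real obstacle beyond making that bridge explicit, and I note that the existence hypothesis for $\mathcal{M}[\tilde{w}_i, S]$ enters only to guarantee, via Theorem~\ref{MinAbsorb_thm} and Corollary~\ref{component_periodic}, that the component $\mathcal{M}_j$ and its basin are well-defined objects.
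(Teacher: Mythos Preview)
Your proof is correct and follows essentially the same approach as the paper: both arguments use the strict positivity of the probability weights to convert the deterministic $\tilde{w}_i$-orbit into a chain of accessible states, invoke Corollary~\ref{component_periodic} to identify the component as a single periodic orbit, and then apply the closedness of the recurrent class $\mathcal{A}_k$ to conclude. The only cosmetic difference is that you split the argument into two stages (first reach a single point $\tilde{z}$ of the component, then sweep out the whole orbit from $\tilde{z}$), whereas the paper reaches every point of the component directly from $\tilde{x}$ in one go via a bound $k < m + p$ on the number of steps.
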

\begin{proof}
Suppose that $\tilde{x} \in S$ satisfies the assumptions above. Since $\tilde{x}$ is in the basin of attraction of $\mathcal{M}_j[\tilde{w}_i, S]$, by Def.~\ref{minabsorbing_def} there is $m\in \Natural$ such that $\tilde{w}_i^{\circ m}(\tilde{x}) \in \mathcal{M}_j[\tilde{w}_i, S]$. Moreover, $\mathcal{M}_j[\tilde{w}_i, S]$ is a periodic orbit for ${w}_i$ in $S$ (Corollary \ref{component_periodic}). Therefore, writing $p \in \Natural$ for the period of $\mathcal{M}_j[\tilde{w}_i, S]$, we get that for any $y \in \mathcal{M}_j[\tilde{w}_i, S]$, there is a certain $k < m + p$ such that 
\begin{equation*}
    \prob(\tilde{X}_k = \tilde{y} \;|\; \tilde{X}_0 = \tilde{x}) \geq \prod_{l=1}^k p_i(\tilde{w}_i^{\circ l}(\tilde{x})) > 0,
\end{equation*}
because $p_i(.)$ is strictly positive over $S$. Hence, every point of $\mathcal{M}_j[\tilde{w}_i, S]$ is accessible from $\tilde{x}$. Since, by assumption, $\tilde{x}$ is also in $\mathcal{A}_k$ and the set is a closed class, as a result we get that $\mathcal{A}_k \supset \mathcal{M}_j[\tilde{w}_i, S]$ as required.  
\end{proof}
\medskip
\begin{thmm}\label{attractor_num_thm}
Let $\{ S; \tilde{w}_1, \dots, \tilde{w}_N; p_1,\dots, p_N\}$ be a DIFS. Let $I \subset \{1, \dots, N \}$ be the subset of the indices of the DIFS maps for which minimal absorbing sets exist. Suppose that $I$ is nonempty and define $\mathcal{M}_{\mathcal{F}} := \{ \mathcal{M}[\tilde{w}_i, S]\}_{i\in I}$. Let $\mathcal{A}_{\mathcal{F}} := \{ \mathcal{A}_k\}$ be the family of the recurrent communication classes of the associated Markov chain. Then the following statements hold:\\
\emph{(a)} The number of the sets in $\mathcal{A}_{\mathcal{F}}$ is not greater than the number of components of any set in $\mathcal{M}_{\mathcal{F}}$, that is, 
\begin{equation*}
    |\mathcal{A}_{\mathcal{F}}| \leq \min_{i\in I} \mathcal{M}_{\#}[\tilde{w}_i, S],
\end{equation*}
where the subscript '$\#$' stands for the number of components of a minimal absorbing set. 
\medskip
\newline
\emph{(b)} If for some $i, j \in I$,
\begin{equation*}
    \mathcal{M}[\tilde{w}_i, S] \subset \mathcal{B}[\mathcal{M}_k[\tilde{w}_j, S]]
\end{equation*}
for a certain $k \in \{1,\dots,\mathcal{M}_{\#}[\tilde{w}_j, S] \}$, then
\begin{equation*}
    | \mathcal{A}_{\mathcal{F}}| \leq 1. 
\end{equation*}
\end{thmm}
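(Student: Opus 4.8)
The plan is to build both parts on two facts already in hand: Lemma~\ref{rec2comp}, which says that whenever a recurrent class meets the basin of a component of some $\mathcal{M}[\tilde{w}_i, S]$ it must swallow that whole component, and the observation that the basins of the components of a minimal absorbing set cover all of $S$. I would first record this covering: since $\mathcal{M}[\tilde{w}_i, S]$ exists it is, by Definition~\ref{minabsorbing_def}, an absorbing set for $\tilde{w}_i$ in $S$, so the $\tilde{w}_i$-orbit of every $\tilde{x}\in S$ eventually enters $\mathcal{M}[\tilde{w}_i, S]$; by Corollary~\ref{component_periodic} each component is a $\tilde{w}_i$-invariant periodic orbit, so the orbit in fact settles into exactly one component. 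Hence, for each fixed $i\in I$, the basins $\mathcal{B}[\mathcal{M}_j[\tilde{w}_i, S]]$ form a partition of $S$.

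For part~(a), fix $i\in I$ and let $\mathcal{A}_k$ be any recurrent communication class. Choosing any state $\tilde{x}\in\mathcal{A}_k$, the partition above places $\tilde{x}$ in exactly one basin $\mathcal{B}[\mathcal{M}_j[\tilde{w}_i, S]]$, and Lemma~\ref{rec2comp} then gives $\mathcal{A}_k\supset\mathcal{M}_j[\tilde{w}_i, S]$. Thus every recurrent class contains at least one component of $\mathcal{M}[\tilde{w}_i, S]$. Because distinct recurrent communication classes are disjoint and each (nonempty) component lies in at most one of them, assigning to each $\mathcal{A}_k$ a component it contains is an injection of $\mathcal{A}_{\mathcal{F}}$ into the set of components of $\mathcal{M}[\tilde{w}_i, S]$; hence $|\mathcal{A}_{\mathcal{F}}|\leq\mathcal{M}_{\#}[\tilde{w}_i, S]$. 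Since $i\in I$ was arbitrary, the bound with the minimum over $I$ follows.

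For part~(b), let $\mathcal{A}_l$ be any recurrent class. Running the part~(a) argument with the index $i$ supplies a component $\mathcal{M}_m[\tilde{w}_i, S]\subset\mathcal{A}_l$. By hypothesis $\mathcal{M}_m[\tilde{w}_i, S]\subset\mathcal{M}[\tilde{w}_i, S]\subset\mathcal{B}[\mathcal{M}_k[\tilde{w}_j, S]]$, so any point $\tilde{y}$ of $\mathcal{M}_m[\tilde{w}_i, S]$ is simultaneously in $\mathcal{A}_l$ and in $\mathcal{B}[\mathcal{M}_k[\tilde{w}_j, S]]$. Applying Lemma~\ref{rec2comp} this time to $\tilde{w}_j$ yields $\mathcal{A}_l\supset\mathcal{M}_k[\tilde{w}_j, S]$. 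Since this one fixed nonempty component is then contained in every recurrent class while the classes are pairwise disjoint, there can be at most one recurrent class, i.e.\ $|\mathcal{A}_{\mathcal{F}}|\leq 1$.

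The routine parts are harmless; the step that needs the most care is the covering claim, namely verifying that the basins genuinely exhaust $S$ and that each orbit lands in a single component --- this is where the hypothesis that $\mathcal{M}[\tilde{w}_i, S]$ \emph{exists} (and not merely the intersection-of-absorbing-sets $\mathcal{M}$, which by Remark~\ref{rem_m_nonabsor} need not be absorbing) is essential. I expect the main conceptual obstacle to be invoking Lemma~\ref{rec2comp} with the correct map in each part --- with $\tilde{w}_i$ in~(a) but crucially with $\tilde{w}_j$ in~(b) --- and keeping the two disjointness facts (of recurrent classes and of components) straight, so that the counting in~(a) and the collapse to a single class in~(b) are both valid.
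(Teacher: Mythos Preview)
Your proposal is correct and follows essentially the same approach as the paper: both parts hinge on the fact that the basins of the components of $\mathcal{M}[\tilde{w}_i,S]$ partition $S$, together with Lemma~\ref{rec2comp} and the disjointness of recurrent communication classes. Your write-up is somewhat more explicit about why the basins cover $S$ and frames the counting in~(a) as an injection, but the underlying argument is the same as the paper's.
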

\begin{proof}
(a) Let $\mathcal{A}_k \in \mathcal{A}_{\mathcal{F}}$ and let $\mathcal{M}[\tilde{w}_i, S] \in \mathcal{M}_{\mathcal{F}}$. The basins of attraction $\mathcal{B}[\mathcal{M}_j[\tilde{w}_i, S]]$, $j\in \{1,\dots, \mathcal{M}_{\#}[\tilde{w}_i, S]\}$ forms a countable partition of $S$. Therefore, $\mathcal{A}_k$ is a subset of a union of a certain number of the basins, $\mathcal{A}_k = \mathcal{A}_k \cap \bigcup_j\mathcal{B}[\mathcal{M}_j[\tilde{w}_i, S]]$. On the basis of Lemma \ref{rec2comp}, if $\mathcal{A}_k \cap \mathcal{B}[\mathcal{M}_j[\tilde{w}_i, S]] \neq \varnothing$, then $A_k \supset \mathcal{M}_j[\tilde{w}_i, S]$. Hence, each set in $\mathcal{A}_{\mathcal{F}}$ includes at least one component of the set $\mathcal{M}[\tilde{w}_i, S]$. Moreover, for any $\mathcal{A}_{m} \in \mathcal{A}_{\mathcal{F}}$ such that $A_m \supset \mathcal{M}_j[\tilde{w}_i, S]$, we have $\mathcal{A}_k = \mathcal{A}_m$, because the recurrent communication classes are disjoint. Hence, for any $\mathcal{M}[\tilde{w}_i, S] \in \mathcal{M}_{\mathcal{F}}$, the number of sets in $\mathcal{A}_{\mathcal{F}}$ cannot exceed the number of the components of $\mathcal{M}[\tilde{w}_i, S]$, which completes this part of the proof.
\medskip
\newline
(b) If $\mathcal{A}_{\mathcal{F}}$ is empty, the conclusion of the theorem follows trivially. Assume that $\mathcal{A}_{\mathcal{F}}$ is nonempty. The basins of attractions of the components of the set $\mathcal{M}[\tilde{w}_i, S]$ constitute a countable partition of $S$, so by Lemma \ref{rec2comp}, each $\mathcal{A}_k \in \mathcal{A}_{\mathcal{F}}$ includes at least one of the components of $\mathcal{M}[\tilde{w}_i, S]$. But by the assumption of the theorem, all the components belong to the basin of attraction $\mathcal{B}[\mathcal{M}_k[\tilde{w}_j, S]]$ and thus, again by Lemma \ref{rec2comp}, $\mathcal{M}_k[\tilde{w}_j, S]$ is a subset of every set $\mathcal{A}_k \in \mathcal{A}_{\mathcal{F}}$. Since $\mathcal{A}_{\mathcal{F}}$ is a family of disjoint sets, $\mathcal{A}_{\mathcal{F}}$ consists of a single set as required.   
\end{proof}

\subsection{On the existence of multiple attractors of a discretized hyperbolic IFS in practice}\label{on_multiattractor}
In Sec.~\ref{sect_more_often} we showed that discretization of a hyperbolic IFS may lead to losing the contractive property characterizing the original and we also gave numerical evidence that from the statistical point of view, at least in the case of affine contractions on $(\Real^2, d_E)$, such a situation can be considered typical and is not cured by increasing the precision of computation (expressed by the reciprocal of $\delta$). As a consequence, even though we know that because of the fixed-point theorem the original hyperbolic IFS has an attractor and that this attractor is unique, we cannot use the theorem to show that the same facts hold for a discretized version of the IFS. Moreover, Theorem \ref{Markov_thm} postulates, quite the opposite to Banach's theorem, that a discretized version of the hyperbolic IFS may possess more than one invariant set that attracts orbits generated by RIA, and Theorem \ref{attractor_num_thm} (a) provides an upper bound for the number of the invariant sets in terms of the number of components of the minimal absorbing sets, which---as we know from Corollary \ref{contraction_minimumset}---$\delta$-roundoffs of contractions always possess. On the other hand, the second part of Theorem \ref{attractor_num_thm} gives a sufficient condition for a DIFS attractor (if it exists, and we show in the sequel that it is true) to be unique: \textit{At least one of the minimal absorbing sets should be totally included in a basin of attraction of one of the remaining minimal absorbing sets}. Therefore conversely, a necessary condition for the DIFS to have more than one attractor is: \textit{Each minimal absorbing set has to be intersected with at least two basins of attraction of every other minimal absorbing set}. Let us have a look at the possibility of the occurrence of such an event in practice. 

Theorem \ref{thm_alpha_eps} gives us an upper bound on the diameter of a minimal absorbing set for a contraction and if we treat the value of $\diam_d(C_{\delta})$ as a unit of length, it is clearly seen that the upper bound for the size of the minimal absorbing set depends only on the contractivity factor of the original contraction and remains constant when the precision of computation is changed. In this setting, altering the precision of computation affects only the unit of length in the way that the larger precision (or equivalently, the smaller $\delta$), the smaller the unit of length. In effect, increasing the precision makes the extents of the minimal absorbing sets get smaller relative to the distances between the sets. At the same time, the basins of attractions spread radially from the minimal absorbing sets and their homogeneous regions get larger and larger as the distance from the minimal absorbing set increases (cf. the figures in the previous sections). Consequently, the higher precision of computation, the higher probability that the minimal absorbing set sits entirely in one of the basins of attraction of another minimal absorbing set. For resolutions of computation used in practice such as those offered by floating point arithmetic the minimal absorbing sets are usually so tiny relative to homogeneous regions of the basins of attraction that the configuration in which each of the sets is intersected by more than one basin of attraction of every other set is very special, not to say highly improbable. Therefore, even though a DIFS version of a hyperbolic IFS is usually not hyperbolic regardless of precision of computation in use but if the precision is reasonably high, then the DIFS not only has an attractor (we show it later on) but also this attractor is typically unique. Therefore the uniqueness of the DIFS attractor is not a result yielded by Banach's fixed point theorem (as it is sometimes suggested more or less explicitly in literature), but merely a typical resultant of the geometry of minimal absorbing sets and their basins of attraction when "embedded" in a suitable resolution of computation. 

\subsection{Invariant measure and stationary distribution}\label{sec_statDist}
Now we consider the behavior of an orbit generated by RIA within a recurrent communication class. Suppose that a DIFS possesses at least one recurrent communication class $\mathcal{A}_k$ and assume that the associated Markov chain arrived in the set at a certain step while wandering through $S$ or it just started in one of the points of the set. Because $\mathcal{A}_k$ is a closed communication class the orbit will never leave the set and the Markov chain on $\mathcal{A}_k$ is irreducible (Def.~\ref{Markov_def} (c)). In the context of the problem of approximating the geometry of $\mathcal{A}_k$ it is crucial whether the orbit visits all points of the set during the evolution of the chain. It is not hard to show (see e.g. \cite{Fell68}, p.~391) that if the chain is irreducible recurrent than for any state $\tilde{x} \in \mathcal{A}_k$,
\begin{equation}\label{recurrent_orbit_visit}
    \prob(\exists i \in \Natural : \tilde{X}_i = \tilde{x}) = 1
\end{equation}
regardless of the initial state $\tilde{X}_0 = \tilde{x}_0 \in  \mathcal{A}_k$. In other words, starting from any arbitrary state in $\mathcal{A}_k$, the orbit is certain to pass through every other state in $\mathcal{A}_k$. 

Moreover, the orbits of the Markov chain on a recurrent communication class possess some \emph{average} statistical properties reflected in an \textit{invariant measure} supported by the set. Writing $P$ for the transition matrix of the chain restricted to a recurrent communication class $\mathcal{A}_k$ (an irreducible recurrent chain), an invariant measure $\pi$ can be represented by a row vector $[\pi] = [(\pi)_{\tilde{x}}]_{\tilde{x}\in \mathcal{A}_k}$ of the values $(\pi)_{\tilde{x}} = \pi\{\tilde{x}\}$, which satisfies the equation $[\pi] = [\pi] P$. It can be shown (see \cite{Fell68,Serf09}, also \cite{Meye00} for the treatment from the standpoint of Perron-Frobenius theory) that such a vector for a recurrent communication class always exists, its entries are positive and finite, and equal to the \emph{expected} number of visits the chain makes to states $\tilde{x}$ while returning to a state $\tilde{y} \in \mathcal{A}_k$. Because of the relationship between the values of $\pi$ and a given baseline state $\tilde{y}$, there can be, in general, more than one invariant measure for a recurrent communication class, however the measures are equal up to multiplication by a constant. Another thing is that the value of the invariant measures on a infinite recurrent communication set can be infinite.
But if an invariant measure $\pi$ is finite, then after normalization becomes a probability measure referred to as a \emph{stationary distribution}. It is known from from the Markov chain theory that the normalization of an invariant measure on a recurrent communication class is possible if and only if the class is \emph{positive} recurrent. (As asserted in the second part of Theorem \ref{Markov_thm}, this positive recurrence condition is always satisfied for finite closed communication classes.) The values of the stationary distribution entries $(\pi)_{\tilde{x}}$ are reciprocals of the expected number of steps to return, they are finite and characterize each positive recurrent state $\tilde{x}$ (Def.~\ref{Markov_def} (d)). From this follows that a stationary distribution of a Markov chain restricted to a positive recurrent communication class must be unique. Clearly, if for a certain $i \geq 0$, the distribution of $\tilde{X}_i$ of a chain $\{\tilde{X}_k\ : k \geq 0\}$ is a stationary distribution $\pi$, then the distributions of the random variables that follow in $\{\tilde{X}_k\ : k \geq i\}$ do not change, they are identical\footnote{For any $k > 0$, the distribution $\mu_k$ of $\tilde{X}_k$ is given by $[\mu_{k}] = [\mu_{k-1}] P$, where $\mu_{k-1}$ is the distribution of $\tilde{X}_{k-1}$ (Eq.~\eqref{prob_entry}).} and equal to $\pi$---the process is said to be in a steady (or equilibrium) state. 

\subsection{To be attractive (or at least ergodic)}\label{sec_be_attractive}
It is known (see \cite{Barn88}, Theorem 2.1; also \cite{Diac99,Sten02}) that if an IFS is hyperbolic on $(\Real^n, d)$ or even only contractive on average\footnote{In the relevant literature it is common to weak the contractivity on average by a seemingly less restrictive condition of the contractivity on "log-average". As noticed in \cite{Sten12}, however, this improvement is more of cosmetic nature, because the two conditions are equivalent after an appropriate change of metric.} (with some additional constraints on place dependent $p_i$'s such as Dini's continuity) the stationary distribution is unique and attractive\footnote{Actually, the result holds for much more general spaces, namely, locally compact and separable metric spaces\cite{Barn88}.} in the sense that regardless of the initial distribution of $X_0$, the distributions $\mu_k$ of the random variables in the associated chain $\{X_k : k \geq 0\}$ converges weakly to the stationary distribution $\pi_\infty$, that is, $\mathbf{E}[f(X_k)] \rightarrow \mathbf{E}[f(X)] = \int_{\Real^n}f(x) d\pi_\infty$ for any bounded continuous function $f : \Real^n \rightarrow \Real$, where $X$ is the limiting (in distribution) random variable distributed as $\pi_\infty$. As a consequence, no matter what an initial distribution, the Markov process generated by RIA heads, with probability $1$, for the steady state expressed by the attractive distribution $\pi_\infty$, that is the IFS invariant measure.  This fact forms the basis for Elton's ergodic theorem (see \cite{Elto87}, Corollary 1), which states that for any $x \in \Real^n$ and $f$ defined above, the space average $\mathbf{E}[f(X)]$ is equal with probability $1$ to the time average of $\{ f(X_k) : X_0 = x, k \geq 0\}$, that is,
\begin{equation}\label{birkhoof}
    \lim_{m\rightarrow \infty} \frac{1}{m} \sum_{k=0}^{m-1} f(X_k) = \int_{\Real^n}f(x) d\pi_\infty \;\; a.s. 
\end{equation}
In other words, empirical probability measures $\mu_m = \tfrac{1}{m}\sum_{k=0}^{m-1}\delta_{x_k}$ of almost every\footnote{That is, except for a zero-measure set of exceptions.} orbit of the chain $\{X_k\}$ converge weakly to $\pi_\infty$. 

In effect, given a bounded continuous function $f : \Real^n \rightarrow \Real$, one can determine the right-hand side integral in the equation above by plugging an orbit generated by RIA into the left-hand side of the equation, and we are guaranteed that the result will be correct with probability one. In particular, given a Borel subset $A$, one can approximate the value of $\pi_\infty(A)$ by $\mu_m(A)$ for a certain large $m$ (under an additional technical requirement of the zero $\pi_\infty$-measure of $A$'s boundary), which technically is equivalent to counting and then normalizing the number of points that pop into $A$ in a finite-time realization ($m$ steps) of the algorithm. Such an approach leads to a popular method for visualizing the IFS invariant measure on a discrete grid of pixels. Below we investigate the possible effects of the application of RIA in its variant for visualizing invariant measures when the algorithm is applied in the context of a DIFS without any metric properties imposed on the mappings.  

As pointed out in Sec.~\ref{sec_statDist}, a stationary distribution of an \emph{irreducible} Markov chain is strictly positive and exists if and only if the chain is positive recurrent. This stationary distribution extends trivially to a stationary probability measure of any \emph{reducible} Markov chain whose state space includes the recurrent positive state space of this smaller irreducible chain as one of its closed communication classes (the measure of the complement of the closed communication class is zero). 
A positive recurrent communication class exists if the chain generated by a DIFS possesses a positive recurrent state. A positive recurrent state exists, first of all, if the state space of the associated chain is, or can be appropriately restricted to a finite set (cf. the second part of Theorem \ref{Markov_thm}---we will show that this holds for DIFSs arising from hyperbolic IFSs) or if some other, more general criteria for positive recurrence hold (e.g., the existence of a closed class satisfying Doeblin's criterion\cite{Stro05} or Foster's criterion\cite{Serf09}). 
Now, suppose that the associated chain has more than one class $\mathcal{A}_k$, $k\in I$, each being positive recurrent, so the chain has at least $|I| \in \Natural$ stationary distributions. But if $\pi_k$'s are stationary distributions of the chain, then any convex combination of these distributions is also a stationary distribution, because
\begin{equation}\label{uncountable_stationary}
    \Big(\sum_{k\in I} \alpha_k [\pi_k]\Big)P = \sum_{k\in I} \alpha_k ([\pi_k] P) = \sum_{k\in I} \alpha_k [\pi_k],
\end{equation}
where $P$ is the transition matrix of the chain, and $\sum_{k\in I}\alpha_k = 1$, $\alpha_k \geq 0$. 

Let us summarize our considerations on the DIFS stationary measures with the following:
\medskip
\begin{corollary}\label{DIFS_stat_measures}
Let $\{ S; \tilde{w}_1, \dots, \tilde{w}_N; p_1,\dots, p_N\}$ be a DIFS, and let $\mathcal{A}_{\mathcal{F}}$ be the family of the recurrent communication classes of the associated Markov chain. Denote by $\mathcal{A}^{+}_{\mathcal{F}}$ the positive recurrent subfamily of $\mathcal{A}_{\mathcal{F}}$. Then:\\
\noindent\emph{(a)} The Markov chain has a stationary probability measure if $|\mathcal{A}^{+}_{\mathcal{F}}| \geq 1$.\\ 
\noindent\emph{(b)} if $|\mathcal{A}^{+}_{\mathcal{F}}| = 1$, then the stationary probability measure is unique and supported by the single positive recurrent communication class in $\mathcal{A}^{+}_{\mathcal{F}}$.\\ 
\noindent\emph{(c)} If $|\mathcal{A}^{+}_{\mathcal{F}}| > 1$, then the chain has uncountable number of stationary probability measures of the form \eqref{uncountable_stationary}, that is, the stationary distributions are convex combinations of the stationary probability measures on the sets in $\mathcal{A}^{+}_{\mathcal{F}}$ and supported by their unions.      
\end{corollary}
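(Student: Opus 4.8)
The plan is to build everything from the single structural fact recalled in Sec.~\ref{sec_statDist}: a Markov chain restricted to a positive recurrent communication class is irreducible positive recurrent and therefore carries a \emph{unique} stationary distribution, strictly positive on that class. First I would fix, for each $\mathcal{A}_k \in \mathcal{A}^{+}_{\mathcal{F}}$, this unique stationary distribution $\pi_k$ on $\mathcal{A}_k$ and extend it by zero to all of $S$; as noted in the text preceding the corollary, because $\mathcal{A}_k$ is a closed class this extension is a stationary probability measure of the full chain. This immediately yields part~(a): whenever $|\mathcal{A}^{+}_{\mathcal{F}}| \geq 1$ at least one such $\pi_k$ exists, so the chain admits a stationary probability measure.

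For parts~(b) and~(c) the core ingredient is the converse decomposition: \emph{every} stationary probability measure of the chain is supported on $\bigcup_{k} \mathcal{A}_k$ over $\mathcal{A}_k \in \mathcal{A}^{+}_{\mathcal{F}}$, and restricts on each such class to a multiple of $\pi_k$. I would argue this from standard countable-chain theory (see \cite{Fell68,Serf09}): a stationary probability measure must assign zero mass to every transient state (finite expected number of visits) and to every null-recurrent state (which cannot carry a finite invariant measure), so by Theorem~\ref{Markov_thm} its support lies in the positive recurrent part $\bigcup_{k} \mathcal{A}_k$; restricted to a single class, its normalization is a stationary distribution of the irreducible positive recurrent subchain there, hence equals $\pi_k$ up to the total mass it places on $\mathcal{A}_k$. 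Writing $\alpha_k$ for that mass gives $\pi = \sum_{k} \alpha_k \pi_k$ with $\alpha_k \geq 0$ and $\sum_k \alpha_k = 1$, i.e.\ precisely the form \eqref{uncountable_stationary}.

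Part~(b) then follows at once: if $|\mathcal{A}^{+}_{\mathcal{F}}| = 1$ the index set of the convex combination is a singleton, forcing $\alpha_1 = 1$ and $\pi = \pi_1$, which is therefore the unique stationary probability measure and is supported on the sole class. For part~(c), when $|\mathcal{A}^{+}_{\mathcal{F}}| > 1$ the calculation in \eqref{uncountable_stationary} shows that every such convex combination is stationary, and since the $\pi_k$ have pairwise disjoint supports, distinct weight vectors $(\alpha_k)$ produce distinct measures; as the probability simplex over at least two indices is a continuum, there are uncountably many stationary probability measures, each supported on the union of the classes receiving positive weight.

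The step I expect to be the main obstacle is the converse direction invoked for~(b) and~(c): showing that no stationary probability measure can escape the positive recurrent classes---the vanishing of mass on transient and, especially, null-recurrent states---and that its restriction to each class is pinned down up to a scalar by the per-class uniqueness. These facts are classical for countable Markov chains but are not established in full within the excerpt, so the cleanest route is to cite \cite{Fell68,Serf09} for the support statement and then combine it with the uniqueness-per-class result recalled in Sec.~\ref{sec_statDist}; the remaining bookkeeping---disjointness of supports forcing injectivity of $(\alpha_k) \mapsto \pi$ and the cardinality of the simplex---is routine.
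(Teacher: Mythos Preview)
Your proposal is correct and aligns with the paper's approach: the corollary is presented there as a summary of the preceding discussion in Sec.~\ref{sec_statDist} (existence of a unique stationary distribution on each positive recurrent class, its trivial extension to $S$, and the convex-combination calculation \eqref{uncountable_stationary}) rather than with a separate proof. You go slightly further than the paper by explicitly supplying the converse direction---that any stationary probability measure must vanish on transient and null-recurrent states and hence decompose as in \eqref{uncountable_stationary}---which the paper leaves implicit; your instinct to cite \cite{Fell68,Serf09} for this is exactly right, and the remainder is, as you say, bookkeeping.
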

\medskip
Obviously, by the definition of an attractive distribution, a stationary distribution can be globally attractive only if the stationary distribution is unique. Therefore, a necessary condition for a stationary probability measure $\pi$ supported by the sets in $\mathcal{A}^{+}_{\mathcal{F}}$ to be the globally attractive distribution is $|\mathcal{A}^{+}_{\mathcal{F}}| = 1$, that is, there has to be exactly one positive recurrent communication class, say $\mathcal{A}^{+}_1$, in the state space of the chain. However, in order to provide weak convergence of probability measures to $\pi$ independently of the initial distribution, in addition there cannot be other closed recurrent and closed transient classes in the state space but $\mathcal{A}^{+}_1$ and, moreover, the probability for the chain to escape from the transient states $\mathcal{T}$ has to be one. As long as the latter "escape-from-transient-class" condition is always met when the space upon which a DIFS acts is, or can be rightly restricted to a finite set, we usually cannot ensure that there will be only one recurrent communication class in the state space even in the case of DIFSs arising from hyperbolic IFSs (however, bearing in mind Theorem \ref{attractor_num_thm}, Theorem \ref{contr_half_thm} as well as the heuristic argument given in Sec.~\ref{on_multiattractor}). Therefore, even if a DIFS is a discretized version of hyperbolic IFS, it can have uncountable many stationary distributions (and thus no globally attractive distribution), and hence DIFSs are not in general contractive even on average (because in the latter case the stationary distribution is unique). 

Nonetheless, given a DIFS such that the associated chain leaves the transient class $\mathcal{T}$ with probability one, that is, $\prob(\exists i\in \Natural : \tilde{X}_i \notin \mathcal{T}) = 1$ independently of $\tilde{X}_0 \in \mathcal{T}$, we immediately get that the recurrent family $\mathcal{A}_{\mathcal{F}}$ is nonempty (since $\mathcal{T}$ is not a closed class, so there must be at least one recurrent state) and thus the orbit generated by RIA is guaranteed (almost surely) to reach one of the sets $\mathcal{A}_k$ in $\mathcal{A}_{\mathcal{F}}$. Since the sets are closed classes, the orbit will never leave such a set after it gets to the set. Moreover, due to Eq.~\eqref{recurrent_orbit_visit} it will visit all points $\tilde{x}$ in $\mathcal{A}_k$. Yet the orbit is not guaranteed to stabilize according to a stationary distribution even if the set is positive recurrent and, hence, the irreducible Markov chain on the set possesses a stationary distribution. The reason is that in order for a stationary distribution to be attractive for distributions on a positive recurrent communication class, the class should additionally contain an \emph{aperiodic} state, a state for which the greatest common divisor of possible numbers of steps to return (periods) is $1$ (which would imply that all states of the class were aperiodic too). It is also worth noting that in contrast to positive recurrence, aperiodicity is not guaranteed by finiteness of a state space. Therefore, even if a domain on which a DIFS acts is finite, there is no guarantee that the DIFS has a stationary probability measure that is attractive at the very least locally for distributions on a positive recurrent communication class. Anyway, any positive recurrent Markov chain is ergodic (see remark below), which means that despite the fact the associated chain within a positive recurrent communication class $\mathcal{A}_k^{+}$ does not necessarily reach a stationary distribution, the orbit generated by RIA can still be used to determine the value of the counterpart of the integral on the right-hand side of Eq.~\eqref{birkhoof}. In particular, if $\pi$ is a stationary distribution on $\mathcal{A}_k^{+}$, then for any state
$\tilde{y} \in \mathcal{A}_k^{+}$, 
\begin{equation*}
    \lim_{m\rightarrow \infty} \frac{1}{m} \sum_{i=0}^{m-1} \mathbf{1}_{\{\tilde{y}\}}(\tilde{X}_i) = \pi \{\tilde{y}\} \;\; a.s. 
\end{equation*}
provided that $\Pr(\tilde{X}_0 \in \mathcal{A}_k^{+}) = 1$. Hence, for any $\tilde{y} \in \mathcal{A}_k^{+}$,
\begin{equation*}
    \Pr\Big(\lim_{m\rightarrow \infty} \frac{1}{m} \sum_{i=0}^{m-1} \mathbf{1}_{\{\tilde{y}\}}(\tilde{X}_i) = \pi \{\tilde{y}\} | \tilde{X}_j \in \mathcal{A}_k^{+}\Big) = 1,
\end{equation*}
provided that there is a certain $j \in \Natural$ such that $\Pr(\tilde{X}_j \in \mathcal{A}_k^{+}) > 0$. As a consequence, if the orbit generated by RIA enters $\mathcal{A}_k^{+}$, one can use the orbit to render an image of the stationary distribution supported by the set. On the other hand, if the orbit does not enter a positive recurrent communication class, that is, it arrives either in $\mathcal{A}_k$ that is not positive recurrent or it stays in a transient class, then for any $\tilde{x} \in S$,
\begin{equation*}
    \lim_{m\rightarrow \infty} \frac{1}{m} \sum_{k=0}^{m-1} \mathbf{1}_{\{\tilde{x}\}}(X_k) = 0 \;\; a.s.
\end{equation*}
no matter what the initial distribution (see \cite{Stro05}, pp. 72--74). In such a case, an attempt to visualize a measure with RIA would yield a gradually vanishing image of a measure on points in $S \subset \mathcal{D}^n(\delta)$.  
\medskip

\begin{remark}[On egodicity of Markov chains]\label{rem_ergodic}
It is common in the literature on countable Markov chains that an ergodic chain is defined as an irreducible chain that is both positive recurrent and aperiodic, which is a sufficient condition for the chain to have an attractive (limiting) and, thus, unique stationary distribution. Such a chain is also ergodic in the sense of ergodic theory by some standard arguments\footnote{The arguments come down to expressing a Markov chain in terms of a measure preserving dynamical system on the space of the orbits of the chain, so that the $\sigma$-algebra is generated by the cylinders in the product space of the chain's state space and a map $T$ is the left shift operator preserving a measure $\mu$ being a counterpart of the stationary measure of the chain. Then, $T$ is ergodic (i.e., for every measurable $B$ such that $T^{-1}(B) = B$, $\mu(B) = 1$ or $0$) by the irreducibility of the state space of the chain. The Birkhoff's property of the almost sure convergence of the time averages on the orbits of an irreducible positive recurrent chain to the space average (as in Eq.~\eqref{birkhoof}) is the immediate consequence the ergodicity of the measure preserving operator $T$.} However, this usage of the term 'ergodic' in this context is somewhat misleading because it suggests that aperiodicity is necessary for a Markov chain to be ergodic in the sense of ergodic theory, whereas---as we pointed out earlier---every irreducible positive recurrent chain is ergodic (see e.g. \cite{Stro05,Walt00}, also a remark on Markov ergodicity by Elton in \cite{Elto87}). An instructive instance of an irreducible positive recurrent chain that is not aperiodic, but still ergodic is a 2-state chain with the transition matrix $\big[\begin{smallmatrix}
  0 & 1\\
  1 & 0
\end{smallmatrix}\big]$. 
The chain is positive recurrent and periodic with period $2$, it has a unique stationary distribution $[\tfrac{1}{2}, \tfrac{1}{2}]$, but the distribution is not attractive. Nevertheless, it is clearly seen that the chain is ergodic.
\end{remark}
\medskip
\begin{corollary}\label{Coro_DIFS_render}
Let $\{ S; \tilde{w}_1, \dots, \tilde{w}_N; p_1,\dots, p_N\}$ be a DIFS such that the associated Markov chain $\{\tilde{X}_i : i\geq 0\}$ satisfies $\prob(\exists i\in \Natural : \tilde{X}_i \notin \mathcal{T}) = 1$ for any $\tilde{X}_0 = \tilde{x} \in S$. Let $\{\tilde{x}_i\}_{i=0}^{\infty}$ be an orbit generated by RIA. Then, for a certain $m \in \Natural$, with probability one,
\begin{equation*}
\{\tilde{x}_i\}_{i=m}^{\infty} = \mathcal{A}_k, 
\end{equation*}
where $\mathcal{A}_k \in \mathcal{A}_{\mathcal{F}}$, that is, the set is one of the recurrent communication classes of $\{\tilde{X}_i\}$. 

In addition, if the set is positive recurrent, $\mathcal{A}_k \in \mathcal{A}_{\mathcal{F}}^{+}$, and $\Pr(\tilde{X}_j \in \mathcal{A}_k) = 1$ for a certain $j\in \Natural$, then, with probability one, for any $\tilde{y} \in S$, the ratio of the points in $\{\tilde{x}_i\}_{i=0}^{M}$ that coincide with $\tilde{y}$ to their total number $M$ converges to the value of $\pi_k\{\tilde{y}\}$ as $M \rightarrow \infty$, where $\pi_k$ is the stationary probability measure of the chain on $\mathcal{A}_k$; that is
\begin{equation*}
    \prob\Big(\lim_{M\rightarrow \infty}\frac{1}{M}\sum_{i=0}^{M-1} \delta_{\tilde{X}_i} = \pi_k | \tilde{X}_j \in \mathcal{A}_k \Big) = 1
\end{equation*}
and 
\begin{equation*}
    \prob\Big(\lim_{M\rightarrow \infty}\frac{1}{M}\sum_{i=0}^{M-1} \mathbf{1}_{\{ \tilde{y}\}}(\tilde{X}_i) = \pi_k\{\tilde{y}\} | \tilde{X}_j \in \mathcal{A}_k\Big) = 1,
\end{equation*}
if only $\Pr(\tilde{X}_j \in \mathcal{A}_k) > 0$ for a certain $j \in \Natural$. 

In particular, both statements of the corollary hold if $S$ is a finite set. 
\end{corollary}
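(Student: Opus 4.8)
The plan is to assemble the corollary from three ingredients already available: the state-space decomposition of Theorem~\ref{Markov_thm}, the all-states-visited property~\eqref{recurrent_orbit_visit} of irreducible recurrent chains, and the ergodic theorem for irreducible positive recurrent chains invoked in Sec.~\ref{sec_be_attractive} (see Remark~\ref{rem_ergodic} and \cite{Stro05}).

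To prove the first assertion I would introduce the first exit time $\tau := \min\{i \in \Natural : \tilde{X}_i \notin \mathcal{T}\}$, which is finite almost surely by the hypothesis $\prob(\exists i : \tilde{X}_i \notin \mathcal{T}) = 1$. By Theorem~\ref{Markov_thm} we have $S = \mathcal{T}\cup\bigcup_k \mathcal{A}_k$, so $\tilde{X}_\tau$ lands in some (random) recurrent class $\mathcal{A}_k$; since $\mathcal{A}_k$ is a closed class the orbit never leaves it, and with $m := \tau$ this yields $\{\tilde{x}_i\}_{i\geq m}\subseteq \mathcal{A}_k$. For the reverse inclusion I would invoke the strong Markov property to restart the chain at $\tilde{X}_\tau\in \mathcal{A}_k$: as the restricted chain is irreducible recurrent, \eqref{recurrent_orbit_visit} guarantees that every state of $\mathcal{A}_k$ is visited almost surely, so $\{\tilde{x}_i\}_{i\geq m} = \mathcal{A}_k$.

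For the second assertion I would condition on $\{\tilde{X}_j \in \mathcal{A}_k\}$, which is meaningful precisely when $\Pr(\tilde{X}_j\in\mathcal{A}_k) > 0$, and use the Markov property at the deterministic time $j$: conditionally the shifted process $\{\tilde{X}_{j+i}\}_{i\geq0}$ is a Markov chain with initial distribution supported in the closed positive recurrent class $\mathcal{A}_k$, hence an irreducible positive recurrent chain on $\mathcal{A}_k$. The ergodic theorem for such chains then gives, for each fixed $\tilde{y}\in\mathcal{A}_k$, the almost-sure limit $\frac{1}{M}\sum_{i=0}^{M-1}\mathbf{1}_{\{\tilde{y}\}}(\tilde{X}_i)\to\pi_k\{\tilde{y}\}$; the finitely many initial terms drop out of the Ces\`aro average, so it is immaterial whether the sum starts at $0$ or at $j$. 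For $\tilde{y}\notin\mathcal{A}_k$ the indicator is zero for all $i\geq j$ while $\pi_k\{\tilde{y}\}=0$, so the average still converges to $\pi_k\{\tilde{y}\}$, and the pointwise statement holds for every $\tilde{y}\in S$.

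To lift this to the empirical-measure statement I would intersect, over the countable index set $\tilde{y}\in S$, the full-probability events of the previous step, obtaining a single almost-sure event on which $\frac{1}{M}\sum_{i=0}^{M-1}\delta_{\tilde{X}_i}\{\tilde{y}\}\to\pi_k\{\tilde{y}\}$ holds simultaneously for all $\tilde{y}$; since $S$ is countable and $\pi_k$ is a probability measure, atomwise convergence upgrades by Scheff\'e's lemma to $\frac{1}{M}\sum_{i=0}^{M-1}\delta_{\tilde{X}_i}\to\pi_k$. The final claim for finite $S$ then follows by verifying that the standing hypotheses become automatic: the second part of Theorem~\ref{Markov_thm} makes $\mathcal{A}=\bigcup_k\mathcal{A}_k$ nonempty and reached in finitely many steps with probability one from any initial state (so $\prob(\exists i : \tilde{X}_i\notin\mathcal{T})=1$), and forces every recurrent class to be positive recurrent, so that each reachable $\mathcal{A}_k$ satisfies $\Pr(\tilde{X}_j\in\mathcal{A}_k)>0$ for some $j$. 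I expect the main obstacle to be not the ergodic theorem, which is quoted from the literature, but the careful bookkeeping around the conditioning and Markov restart in the second assertion --- confirming that the conditioned shifted chain is genuinely irreducible positive recurrent on $\mathcal{A}_k$ so that the time averages from $0$ and from $j$ share the same limit --- together with the routine but essential passage from per-state almost-sure convergence to simultaneous convergence of the empirical measures across the countable state space.
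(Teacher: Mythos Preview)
Your proposal is correct and follows essentially the same approach as the paper: the corollary is stated there without a separate proof, as a summary of the discussion immediately preceding it in Sec.~\ref{sec_be_attractive}, and you have correctly assembled exactly those ingredients --- the decomposition of Theorem~\ref{Markov_thm}, the closedness of recurrent classes, the all-states-visited property~\eqref{recurrent_orbit_visit}, and the ergodic theorem for irreducible positive recurrent chains (Remark~\ref{rem_ergodic}). Your additional care with the strong Markov restart, the countable intersection, and Scheff\'e's lemma simply makes explicit what the paper leaves to the reader.
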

\begin{figure}[t]
\centering
\begin{subfigure}{.3\textwidth}
  \centering
  \includegraphics[width=1\linewidth]{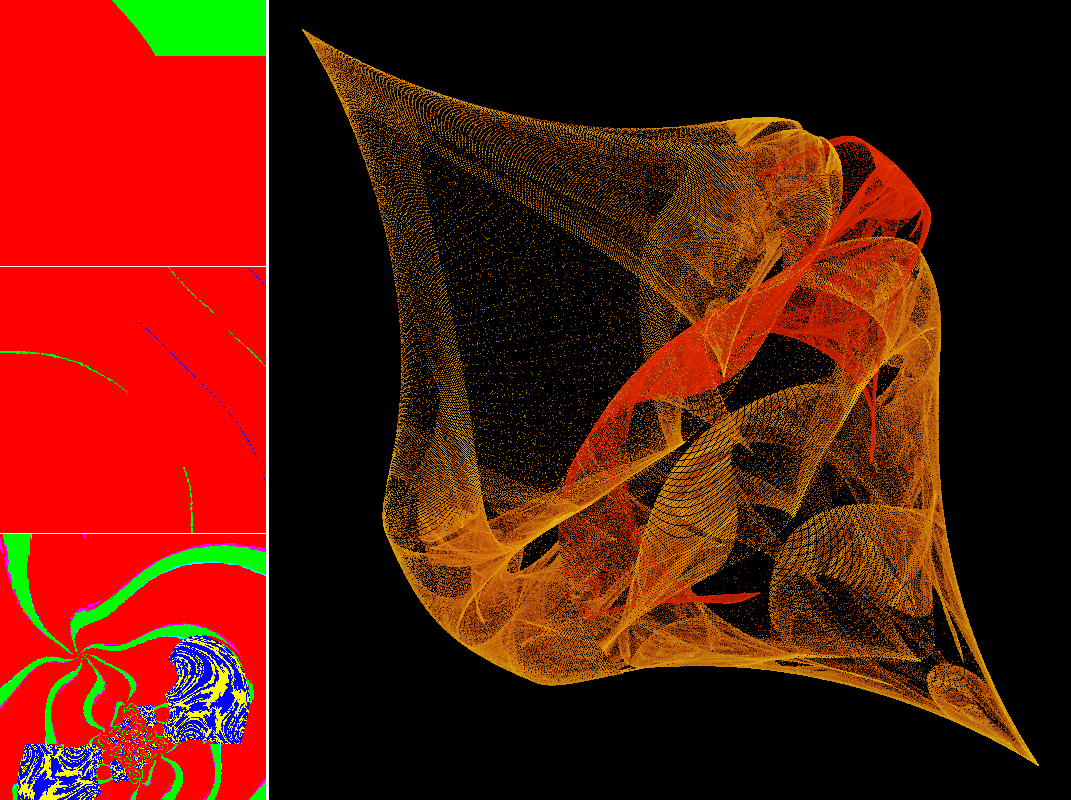}
  \caption{}
  \label{fig:7a}
\end{subfigure}
\begin{subfigure}{.3\textwidth}
  \centering
  \includegraphics[width=1\linewidth]{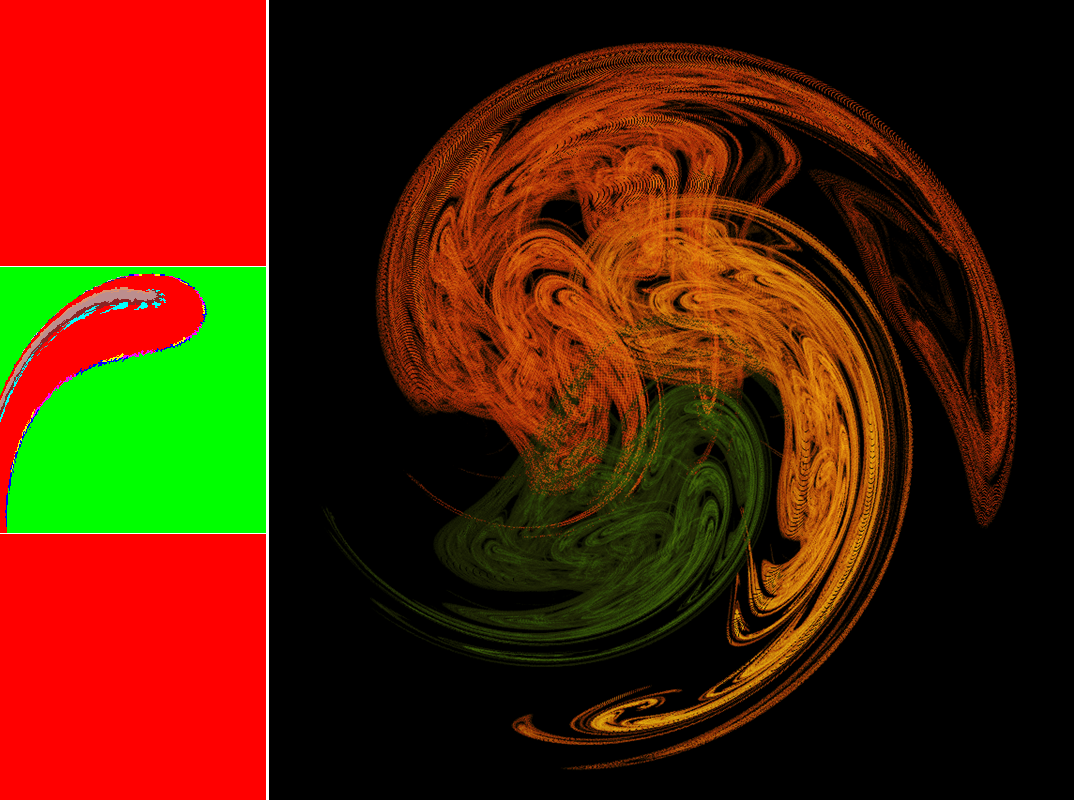}
  \caption{}
  \label{fig:7b}
\end{subfigure}
\begin{subfigure}{.3\textwidth}
  \centering
  \includegraphics[width=1\linewidth]{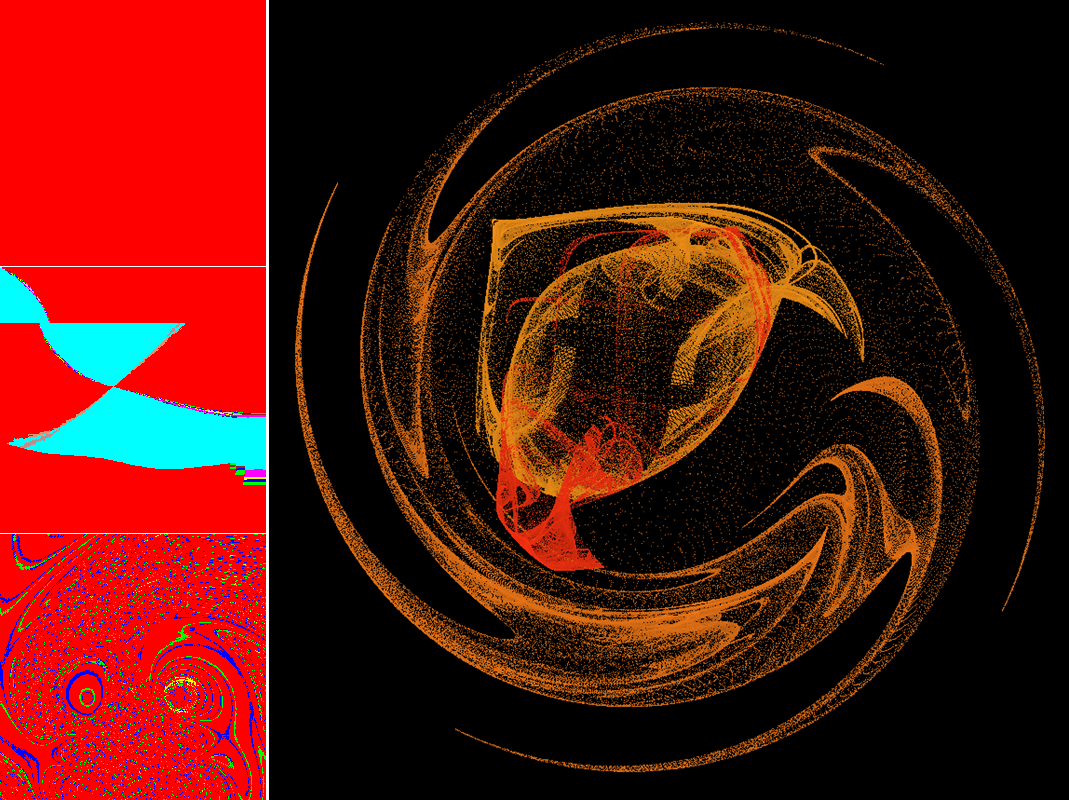}
  \caption{}
  \label{fig:7c}
\end{subfigure}
\caption{Examples of DIFS stationary distributions visualized with RIA. The DIFSs are composed of three maps. The number of the minimal absorbing set components of the maps are: a) 2, 3, 6, b) 1, 13, 1, c) 1, 11, 5. The basins of attractions of the maps are depicted on the left-hand side of the pictures}\label{fig:7}
\end{figure}
\medskip

In Fig.~\ref{fig:7} we present three examples of stationary probability measures of DIFSs defined on a squared subset $C \subset \mathcal{D}^2(\delta)$ of $1000\times1000$ resolution. Each DIFS consists of three mappings, which were constructed and stored in $1000\times1000$ arrays. Each array $W_i$ represented a single DIFS map $\tilde{w}_i$ in the form of a pair of integer numbers as $(W_i)_{k l} =\tilde{w}(k\delta, l\delta)/\delta$. In other words, each $W_i$ held information about a directed graph with vertices $(k,l)$ that are states of the associated Markov chain and edges generated by $\tilde{w}_i$, and thus the outdegree of each vertex being $1$. For each DIFS, the arrays were initialized with three $\delta$-roundoffs of contractive similarities describing a Sierpi\'nski's triangle, and then connections between the states were processed (changed) several times with the aid of nonlinear mappings, with care not to create a connection with a state lying outside $C$. Finally, the arrays (graphs) were additionally smoothed with a $3\times 3$ Gaussian filter. Analogously, the distributions of the DIFS place-dependent probabilities were stored in a $1000\times 1000$ array $D$ such that $(D)_{kl}$ was the distribution at state $(k, l)$. The array was initiated with uniform distributions, which were then perturbed with a nonlinear mapping. The renderings were obtained by means of RIA that generated an orbit in subset $C$ with the aid of the arrays $W_i$ and $D$, so that given state $(k, l)$, the next state was determined as $(W_i)_{kl}$ with $i$ drawn from the distribution $(D)_{kl}$. Since $W_i$'s and $D$ function as lookup tables, the orbit was generated in an extremely efficient manner, based only on fetches from the arrays and a pseudorandom generator. In the pictures the value of the measure of a state is interpreted as brightness, and the coloring reflects participation of a DIFS map in conveying a measure to a state, according to the formula $c_{new} = (c_{old} + c_i)/2$, where $c_{new}$ and $c_{old}$ are a new color and, respectively, a current color assigned to a visited state (with $c_{old}$ initialized with black at the beginning of the algorithm), and $c_i$ is a color assigned to the $i$th DIFS map.   

\subsection{DIFSs for hyperbolic IFSs}\label{sec_RIA_hyperbolic}
In this section we examine the case of DIFSs consisting of maps being $\delta$-roundoffs of contractions, in particular, DIFSs that are discretized versions of hyperbolic IFSs with constant probabilities. We start with the following theorem that establishes important facts concerning a DIFS composed of $\delta$-roundoffs of contractions.
\medskip
\begin{thmm}\label{DIFS_contr}
Let $\{\mathcal{D}^n(\delta);\tilde{w}_1, \dots, \tilde{w}_N; p_1, \dots, p_N \}$ be a DIFS such that the mappings $\tilde{w}_i$ are $\delta$-roundoffs of contractions $w_i$ on $(\Real^n, d)$. Let $o$ be any point of $\Real^n$. Denote $r_{max} := \max_i d(x_f^{(i)}, o)$, $\lambda_{max} := \max_i \lambda_i$ and $\alpha:= \frac{1 + \lambda_{max}}{1 - \lambda_{max}}$, where $x_f^{(i)}$ and $\lambda_i$ are the fixed points and, respectively, the contractivity factors of the mappings $w_i$. Then, for any $\varepsilon > 0$ and $S = B(o, r + \varepsilon) \cap \mathcal{D}^n(\delta)$, where $B(o, r)$ is an open ball in $(\Real^n, d)$, centred at $o$ and with radius $r = \alpha r_{max} + \theta(1-\lambda_{max})^{-1}$, the DIFS transformations map $S$ into itself, $\tilde{w}_i(S) \subset S$ for every $i\in \{1, \dots, N\}$. Moreover, the DIFS possesses nonempty $\mathcal{A}$, the set of all recurrent states of the associated Markov chain, and $\mathcal{A} \subset S$.
\end{thmm}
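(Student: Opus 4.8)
The plan is to prove the two assertions in turn: first the forward invariance $\tilde{w}_i(S)\subset S$, and then the existence and location of the recurrent set $\mathcal{A}$, the latter building directly on the former.

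For the invariance, I would fix an arbitrary $\tilde{x}\in\mathcal{D}^n(\delta)$ and any index $i$, and bound $d(\tilde{w}_i(\tilde{x}),o)$ by routing the triangle inequality through the exact image $w_i(\tilde{x})$ and through the fixed point $x_f^{(i)}$. Since $\tilde{w}_i(\tilde{x})=\widetilde{w_i(\tilde{x})}$ lies within $\theta$ of $w_i(\tilde{x})$ (Def.~\ref{roundoff_def}), since $w_i$ contracts toward $x_f^{(i)}$ with factor $\lambda_i\le\lambda_{max}$, and since $d(x_f^{(i)},o)\le r_{max}$, I obtain
\begin{equation*}
d(\tilde{w}_i(\tilde{x}),o)\;\le\;\theta+\lambda_{max}\bigl(d(\tilde{x},o)+r_{max}\bigr)+r_{max}\;=:\;g\bigl(d(\tilde{x},o)\bigr),
\end{equation*}
where $g$ is an affine function of slope $\lambda_{max}<1$. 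The arithmetical heart of the argument---and the quantity the statement is engineered around---is that the constant $r=\alpha r_{max}+\theta(1-\lambda_{max})^{-1}$ is exactly the fixed point of $g$, i.e.\ $g(r)=r$; this is a one-line check once $\alpha=(1+\lambda_{max})/(1-\lambda_{max})$ is substituted. Because $g$ is increasing with slope below $1$ and fixes $r$, it maps every value exceeding $r$ strictly downward toward $r$, so $g(r+\varepsilon)<r+\varepsilon$. For $\tilde{x}\in S$ we have $d(\tilde{x},o)<r+\varepsilon$, whence $d(\tilde{w}_i(\tilde{x}),o)\le g(d(\tilde{x},o))<r+\varepsilon$ and $\tilde{w}_i(\tilde{x})\in S$. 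This gives $\tilde{w}_i(S)\subset S$ for every $i$, hence $\bigcup_i\tilde{w}_i(S)\subset S$, so by Theorem~\ref{Markov2Hutchinson}(a) the set $S$ is a closed class of the associated chain; note also that $S$, being a bounded subset of the grid, is finite.

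For the second assertion I would argue that no state outside $S$ can be recurrent. The monotone bound $d(\tilde{w}_{i}(\tilde{x}),o)\le g(d(\tilde{x},o))$ holds for every map, so along any realization started at some $\tilde{z}\notin S$ the distance to $o$ satisfies $d(\tilde{X}_k,o)\le g^{\circ k}(d(\tilde{z},o))$. Since $\tilde{z}\notin S$ forces $d(\tilde{z},o)\ge r+\varepsilon>r$ and $g$ strictly lowers values above $r$, we get $d(\tilde{X}_k,o)<d(\tilde{z},o)$ for every $k\ge 1$; in particular the chain can never return to $\tilde{z}$, so $\tilde{z}$ is transient. Consequently every recurrent state lies in $S$, giving $\mathcal{A}\subset S$. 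Finally, restricting the chain to the finite closed set $S$ and invoking the finiteness clause of Theorem~\ref{Markov_thm} produces a nonempty set of positive recurrent states inside $S$; by closedness these are precisely the recurrent states of the full chain, so $\mathcal{A}$ is nonempty. I expect the only real obstacle to be bookkeeping: making the identity $g(r)=r$ transparent and keeping the strict-versus-nonstrict inequalities consistent, so that both the invariance into the \emph{open} ball and the ``no return'' conclusion come out strict.
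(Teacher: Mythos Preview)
Your proof is correct. The invariance argument and the nonemptiness of $\mathcal{A}$ match the paper's approach exactly---the same triangle-inequality route through $w_i(\tilde{x})$ and $x_f^{(i)}$, then finiteness of $S$ plus Theorem~\ref{Markov_thm}; your packaging of the arithmetic via the affine map $g$ with fixed point $r$ is in fact tidier than the paper's explicit expansion.

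Where you genuinely diverge is in showing $\mathcal{A}\subset S$. The paper does \emph{not} argue directly that points outside $S$ are transient; instead it invokes the minimal-absorbing-set machinery from Section~\ref{sec_minabset}: by Corollary~\ref{contraction_minimumset} each $\tilde{w}_i$ has a minimal absorbing set $\mathcal{M}[\tilde{w}_i]$, every recurrent communication class must contain at least one component of $\mathcal{M}[\tilde{w}_i]$ (via accessibility through the basins of attraction), and hence any closed class containing $\mathcal{M}[\tilde{w}_i]$ must contain all of $\mathcal{A}$; a short estimate using Theorem~\ref{thm_alpha_eps} then shows $\mathcal{M}[\tilde{w}_i]\subset S$. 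Your Lyapunov-style argument---iterating the bound $d(\tilde{X}_k,o)\le g^{\circ k}(d(\tilde{z},o))$ and noting that $g$ strictly decreases values above its fixed point $r$, so the chain can never return to $\tilde{z}\notin S$---is more elementary and entirely self-contained, needing none of the Section~\ref{sec_minabset} apparatus. The paper's route, on the other hand, ties the result into the minimal-absorbing-set framework that is used repeatedly elsewhere (e.g.\ Theorem~\ref{attractor_num_thm}), so it reinforces that thread of the exposition; yours buys simplicity and independence from that earlier material.
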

\begin{proof}
First we show that the DIFS transformations map $S$ into itself. Suppose that $\tilde{x}$ is any point of $S$. We need to show that, for any $i\in \{1,\dots, N \}$, $\tilde{w}_i(\tilde{x}) \in S$.Using the triangle inequality along with the Lipschitz continuity of $w_i$'s and the definition of a $\delta$-roundoff of a mapping (Def.~\ref{roundoff_def}), we get, for any $i\in \{ 1, \dots, N\}$ and $\varepsilon > 0$, that
\begin{equation*}
\begin{split}
    d(\tilde{w}_i&(\tilde{x}), o) \leq d(w_i(\tilde{x}), o) + d(\tilde{w}_i(\tilde{x}), w_i(\tilde{x}))
    \leq d(w_i(\tilde{x}), x_f^{(i)}) + d(x_f^{(i)}, o) + \theta
    \leq \lambda_{max}\; d(\tilde{x}, x_f^{(i)}) + r_{max} + \theta\\
    &\leq \lambda_{max}\; \big(d(\tilde{x}, o) + d(x_f^{(i)}, o) \big) + r_{max} + \theta
    < \lambda_{max}\; ( \alpha r_{max} + \theta(1-\lambda_{max})^{-1} + \varepsilon + r_{max} ) + r_{max} + \theta\\
    &= \alpha r_{max} + \lambda_{max} (\theta(1-\lambda_{max})^{-1} + \varepsilon)
    < \alpha r_{max} + \theta(1-\lambda_{max})^{-1} + \varepsilon
\end{split}
\end{equation*}
as required. 

Now we show that the Markov chain associated with the DIFS possesses a set $\mathcal{A}$ of recurrent states. By Theorem \ref{Markov2Hutchinson} (a), the set $S$ is a closed class and, moreover, $S$ is by definition finite, so due to the second part of Theorem \ref{Markov_thm}, the Markov chain possesses at least one nonempty recurrent communication class  within $S$, and hence $\mathcal{A} \neq \varnothing$. 

To complete the proof we need to show that $\mathcal{A} \subset S$. By the assumption of the theorem, $\tilde{w}_i$ are roundoffs of contractions, so by Corollary \ref{contraction_minimumset}, for each $\tilde{w}_i$, there is a finite minimal absorbing set $\mathcal{M}[\tilde{w}_i]$ (with respect to the whole space $\mathcal{D}^n(\delta)$). Now, fix some $i \in \{1, \dots, N \}$ and observe that if $\tilde{x} \in \mathcal{A}$, then $\tilde{x}$ is located in one of the basins of attraction of $\mathcal{M}[\tilde{w}_i]$. Therefore, for \emph{each} $\tilde{x} \in \mathcal{A}$, there is a state $\tilde{y} \in \mathcal{M}[\tilde{w}_i]$ accessible from $\tilde{x}$, and thus also $\tilde{y} \in \mathcal{A}$, because $\mathcal{A}$ is a closed class. In addition, $\mathcal{A}$ is composed of communication classes $\mathcal{A}_k$. From this we conclude that any set which is a closed class and, at the same time, includes $\mathcal{M}[\tilde{w}_i]$ has to contain $\mathcal{A}$ too. The set $S$ is a closed class, so to finish the proof it suffices to show that $S \supset \mathcal{M}[\tilde{w}_i]$. On the basis of Theorem \ref{thm_alpha_eps}, for every point $\tilde{x} \in \mathcal{M}[\tilde{w}_i]$, $d(\tilde{x}, x_f^{(i)}) \leq \theta(1-\lambda_i)^{-1}$, and hence for any $\varepsilon > 0$,
\begin{equation*}
\begin{split}
    d(\tilde{x}, o) \leq d(x_f^{(i)}, o) + d(\tilde{x}, x_f^{(i)})
    \leq r_{max} + \theta(1-\lambda_{max})^{-1}
    < \alpha r_{max} + \theta(1-\lambda_{max})^{-1} + \varepsilon, 
\end{split}
\end{equation*}
because $\alpha \geq 1$. This completes the proof. 
\end{proof}
\medskip
\begin{remark}\label{IFS_restr}
One can easily check that if $\{\Real^n; w_1, \dots, w_N; p_1, \dots, p_N \}$ is a hyperbolic IFS, then for any $\varepsilon > 0$ and $S = B(o, r + \varepsilon)$, where $r = \lim_{\delta\rightarrow 0}\alpha r_{max} + \theta(1-\lambda_{max})^{-1} = \alpha r_{max}$, the IFS transformations map $S$ into itself, $w_i(S) \subset S$ for every $i\in \{1, \dots, N\}$. Hence, the space upon which the IFS acts can be restricted to any of the compact sets (closed balls) $\overline{S}$, and the IFS attractor $A_\infty \subset\overline{S}$.
\end{remark}
\medskip
\begin{corollary}\label{coro_DIFS_contr}
If $\{ \mathcal{D}^n(\delta); \tilde{w}_1, \dots, \tilde{w}_N; p_1,\dots, p_N\}$ is a DIFS in which the mappings $\tilde{w}_i$ are $\delta$-roundoffs of contractions, then $\mathcal{A}$ is nonempty and finite and hence consists of positive recurrent states; thus $\mathcal{A}_\mathcal{F} = \mathcal{A}^{+}_\mathcal{F} \neq \varnothing$. Therefore, the DIFS possesses stationary probability measures supported by unions of the finite positive recurrent communication classes $\mathcal{A}_k^{+}$ of which $\mathcal{A}$ is composed. Moreover, for every $\tilde{X}_0 = \tilde{x} \in \mathcal{D}^n(\delta)$, $\prob(\exists i\in \Natural : \tilde{X}_i \notin \mathcal{T}) = 1$. Therefore, by Corollary \ref{Coro_DIFS_render}, each run of RIA results, with probability one, in rendering a stationary probability measure supported by one of the classes $\mathcal{A}_k^{+}$. The stationary probability measure is unique if for a certain $i\in \{1, \dots, N\}$, the minimal absorbing set for $\tilde{w}_i$ consists of a single component, or there are $i, j \in \{1, \dots, N\}$, such that $\mathcal{M}[\tilde{w}_i, S] \subset \mathcal{B}[\mathcal{M}_k[\tilde{w}_j, S]]$ for a certain component $\mathcal{M}_k[\tilde{w}_j]$ of the minimal absorbing set $\mathcal{M}_k[\tilde{w}_j]$ (Theorem \ref{attractor_num_thm}).    
\end{corollary}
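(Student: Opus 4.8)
The plan is to treat the four assertions of the corollary in the order they are stated, leaning heavily on Theorem~\ref{DIFS_contr} for the geometry and on the Markov-chain machinery of Theorem~\ref{Markov_thm}, Corollary~\ref{DIFS_stat_measures} and Corollary~\ref{Coro_DIFS_render} for the probabilistic content. First I would fix an arbitrary $o\in\Real^n$ and invoke Theorem~\ref{DIFS_contr} to produce the finite set $S=B(o,r+\varepsilon)\cap\mathcal{D}^n(\delta)$ that is closed under every $\tilde{w}_i$ and that contains the whole recurrent set $\mathcal{A}$, with $\mathcal{A}\neq\varnothing$. Since $\mathcal{A}\subset S$ is finite, each recurrent communication class $\mathcal{A}_k$ is a finite closed class; applying the finite-state part of Theorem~\ref{Markov_thm} to the (legitimate, self-contained) chain obtained by restricting to the closed class $S$ shows every such $\mathcal{A}_k$ is positive recurrent. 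Hence $\mathcal{A}_{\mathcal{F}}=\mathcal{A}^{+}_{\mathcal{F}}\neq\varnothing$, which is assertion (1); assertion (2) is then an immediate application of Corollary~\ref{DIFS_stat_measures}(a).

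The crux is assertion (3), the almost-sure escape from $\mathcal{T}$, and this is the step I expect to need the most care. The naive route — arguing that the chain a.s.\ reaches $\mathcal{A}$ by repeatedly selecting one map $\tilde{w}_1$ — stumbles because the place-dependent weights $p_i(\cdot)$ need not admit a uniform positive lower bound, so the one-shot success probabilities could degrade to zero along an escaping trajectory. The resolution, which I would extract from the inequality chain already computed inside the proof of Theorem~\ref{DIFS_contr}, is that the distance estimate $d(\tilde{w}_i(\tilde{x}),o)\le \lambda_{max}\,d(\tilde{x},o)+C$, with $C:=(1+\lambda_{max})r_{max}+\theta$, holds \emph{uniformly in} $i$. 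Consequently, for every realization of the chain and every choice of maps, $d(\tilde{X}_{k+1},o)\le\lambda_{max}\,d(\tilde{X}_k,o)+C$ deterministically, whence by induction $d(\tilde{X}_k,o)\le\lambda_{max}^{k}\,d(\tilde{X}_0,o)+C(1-\lambda_{max})^{-1}=\lambda_{max}^{k}\,d(\tilde{X}_0,o)+r$. Because $\lambda_{max}<1$, there is a deterministic finite $N(\tilde{x})$ after which $d(\tilde{X}_k,o)<r+\varepsilon$, i.e.\ $\tilde{X}_k\in S$, for every path. So the chain enters the finite closed class $S$ surely, and once inside, the second part of Theorem~\ref{Markov_thm} (applied to the $S$-chain) guarantees that $\mathcal{A}$ is reached in finitely many steps with probability one; this yields $\prob(\exists i:\tilde{X}_i\notin\mathcal{T})=1$ for every initial state.

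With (1)--(3) in hand, assertion (4) follows by feeding the escape condition (3) straight into Corollary~\ref{Coro_DIFS_render}: each RIA orbit a.s.\ lands in some recurrent class $\mathcal{A}_k$, and by (1) that class lies in $\mathcal{A}^{+}_{\mathcal{F}}$, so the time averages render its unique stationary distribution. For the uniqueness claim (5) I would first note that $S$ is itself an absorbing set for each $\tilde{w}_i$ in $\mathcal{D}^n(\delta)$ (it contains the absorbing ball $\Lambda(x_f^{(i)},r_0)$ of Theorem~\ref{thm_alpha_eps}), so by Theorem~\ref{inherit_min} the relative minimal absorbing sets satisfy $\mathcal{M}[\tilde{w}_i,S]=\mathcal{M}[\tilde{w}_i]$, legitimizing the application of Theorem~\ref{attractor_num_thm}. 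Either hypothesis of (5) then forces $|\mathcal{A}_{\mathcal{F}}|\le 1$ — a single-component minimal absorbing set via part~(a), or the inclusion $\mathcal{M}[\tilde{w}_i,S]\subset\mathcal{B}[\mathcal{M}_k[\tilde{w}_j,S]]$ via part~(b) — and since $\mathcal{A}^{+}_{\mathcal{F}}$ is nonempty we get $|\mathcal{A}^{+}_{\mathcal{F}}|=1$, so Corollary~\ref{DIFS_stat_measures}(b) gives the unique stationary probability measure. The final sentence, that everything holds when $S$ is finite, is the degenerate case $\mathcal{T}$-escape is automatic and is subsumed by the argument above.
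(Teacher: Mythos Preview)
Your proof is correct and, for assertions (1), (2), (4), (5), essentially matches the paper's reasoning (the paper treats all of these as immediate consequences of the cited results and devotes its proof solely to assertion (3)). Your extra care in (5), invoking Theorem~\ref{inherit_min} to identify $\mathcal{M}[\tilde{w}_i,S]$ with $\mathcal{M}[\tilde{w}_i]$, is a detail the paper leaves implicit.

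For the escape-from-$\mathcal{T}$ step (3), however, you take a genuinely different route. The paper simply observes that for any given starting state $\tilde{x}$ one may enlarge $\varepsilon$ in Theorem~\ref{DIFS_contr} so that $\tilde{x}\in S$; the chain then lives from time zero inside a finite closed class, and the second part of Theorem~\ref{Markov_thm} immediately gives almost-sure absorption into $\mathcal{A}$. Your argument instead fixes a single $S$ and extracts from the proof of Theorem~\ref{DIFS_contr} the uniform one-step estimate $d(\tilde{w}_i(\tilde{x}),o)\le \lambda_{max}\,d(\tilde{x},o)+C$, iterating it to show that \emph{every} orbit enters $S$ deterministically after a time depending only on $d(\tilde{x}_0,o)$. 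This is strictly stronger --- it gives sure rather than almost-sure entry, with an explicit hitting-time bound independent of the probabilities $p_i(\cdot)$ --- and it neatly sidesteps the lack of a uniform lower bound on the place-dependent weights that you correctly flagged. The paper's approach is shorter and stays at the level of already-stated theorems; yours buys a quantitative, path-wise conclusion at the cost of reopening the estimate inside Theorem~\ref{DIFS_contr}.
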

\begin{proof}
The only thing to show is that for every $\tilde{X}_0 = \tilde{x} \in \mathcal{D}^n(\delta)$, $\prob(\exists i\in \Natural : \tilde{X}_i \notin \mathcal{T}) = 1$. Using Theorem \ref{DIFS_contr}, for any $\tilde{x} \in \mathcal{T}$ one can construct a finite closed class $S$ so that $\tilde{x}\in S$ and $\mathcal{A} \subset S$. Therefore, the "escape-from-transient-class" conclusion follows from the second part of Theorem \ref{Markov_thm}.
\end{proof}
\medskip
The lemma below establishes a connection between a hyperbolic IFS and a DIFS composed of $\delta$-roundoffs of the IFS mapping, in the form of an assertion on mutual shadowing of orbits of Markov chains generated by the DIFS and the corresponding IFS. 
\medskip
\begin{lem}\label{DIFS_shadowing_thm}
Let $\{ \mathcal{D}^n(\delta); \tilde{w}_1, \dots, \tilde{w}_N; p_1,\dots, p_N\}$ be a DIFS in which $\tilde{w}_i$'s are $\delta$-roundoffs of contractions $w_i$ on $(\Real^n, d)$. Let $\{\Real^n; w_1, \dots, w_n; q_1, \dots, q_N\}$ be an IFS with strictly positive probabilities $q_i \in (0, 1]$, $\sum_{i=1}^N q_i = 1$. Let $\tilde{x}_0$ be any point of $\mathcal{D}^n(\delta)$. Let $X = \{X_k : X_0 = \tilde{x}_0 \}$ and $\tilde{X} = \{\tilde{X}_k : \tilde{X}_0 = \tilde{x}_0 \}$ be the chains generated by the IFS and the DIFS, respectively. Then for any orbit $\{x_{k} = w_{i_k}(x_{k-1})\}$ of $X$ (respectively, any orbit $\{\tilde{x}_{k} = \tilde{w}_{i_k}(\tilde{x}_{k-1})\}$ of $\tilde{X}$), there exists an orbit $\{\tilde{x}_{k} = \tilde{w}_{i_k}(\tilde{x}_{k-1})\}$ of $\tilde{X}$ (respectively, an orbit $\{x_{k} = w_{i_k}(x_{k-1})\}$ of $X$) such that, for any $k \in \Natural$,
\begin{equation}\label{shadowing_ineq}
    d(x_k, \tilde{x}_k) \leq \theta(1-\lambda_{max})^{-1},
\end{equation}
where $\lambda_{max} = \max_i \lambda_i$, $\lambda_i$ is the contractivity factor of $w_i$.
\end{lem}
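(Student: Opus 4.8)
The plan is to couple the two orbits by forcing them to use the \emph{same} sequence of map indices $\{i_k\}$ and the same initial point $\tilde{x}_0$, so that the only discrepancy introduced at each step comes from the roundoff of a single application of $w_{i_k}$. First I would fix the given orbit, say $\{x_k\}$ of $X$ (the reverse direction being entirely symmetric), and read off its index sequence $i_1, i_2, \dots$; I then define the candidate shadowing orbit $\{\tilde{x}_k\}$ of $\tilde{X}$ recursively by $\tilde{x}_k := \tilde{w}_{i_k}(\tilde{x}_{k-1})$, starting from the common point $\tilde{x}_0$. Because the DIFS place-dependent probabilities satisfy $p_i(\cdot) > 0$ and the IFS probabilities satisfy $q_i > 0$, every finite index sequence is realizable with positive probability by both chains, so the orbit I construct is a genuine orbit of $\tilde{X}$ (and, in the reverse direction, of $X$).

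The heart of the argument is a one-step error estimate. Writing $e_k := d(x_k, \tilde{x}_k)$ and inserting the intermediate point $w_{i_k}(\tilde{x}_{k-1})$, the triangle inequality gives
\begin{equation*}
e_k \leq d\big(w_{i_k}(x_{k-1}), w_{i_k}(\tilde{x}_{k-1})\big) + d\big(w_{i_k}(\tilde{x}_{k-1}), \tilde{w}_{i_k}(\tilde{x}_{k-1})\big).
\end{equation*}
The first term is controlled by contractivity, $d(w_{i_k}(x_{k-1}), w_{i_k}(\tilde{x}_{k-1})) \leq \lambda_{i_k} e_{k-1} \leq \lambda_{max}\, e_{k-1}$, while the second term is bounded by $\theta$ directly from the definition of a $\delta$-roundoff of a mapping (Def.~\ref{roundoff_def}), since $\tilde{w}_{i_k}(\tilde{x}_{k-1}) = \widetilde{w_{i_k}(\tilde{x}_{k-1})}$ lies within half a $\delta$-cube diameter of $w_{i_k}(\tilde{x}_{k-1})$. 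This yields the linear recursion $e_k \leq \lambda_{max}\, e_{k-1} + \theta$, with initial condition $e_0 = 0$ because the two orbits share the starting point $\tilde{x}_0$.

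Finally I would unroll the recursion into a geometric sum,
\begin{equation*}
e_k \leq \theta \sum_{j=0}^{k-1} \lambda_{max}^{\,j} = \theta\,\frac{1 - \lambda_{max}^{\,k}}{1 - \lambda_{max}} \leq \theta(1-\lambda_{max})^{-1},
\end{equation*}
which is exactly the asserted bound \eqref{shadowing_ineq}, uniform in $k$. I do not expect a genuinely hard step here: the estimate is an elementary accumulation of per-step roundoff errors, each damped by the shared contractivity $\lambda_{max}$. The only point requiring slight care is the bookkeeping for the two symmetric directions---ensuring that in each case the constructed orbit inherits its index sequence from the given one and remains a positive-probability realization of the relevant chain---but this is immediate from the strict positivity of both families of probabilities. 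As a sanity check, specializing to $N=1$ recovers the single-map shadowing bound stated earlier in the paper.
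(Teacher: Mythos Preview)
Your proposal is correct and follows essentially the same route as the paper: couple the two orbits through the shared index sequence (legitimized by strict positivity of the probabilities), derive the one-step recursion $e_k \le \lambda_{max}\,e_{k-1} + \theta$ via the triangle inequality, contractivity, and the roundoff bound $\theta$, and conclude the uniform bound $\theta(1-\lambda_{max})^{-1}$. The only cosmetic difference is that the paper verifies the bound directly by induction on $k$ rather than unrolling the recursion into a geometric sum, but the content is identical.
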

\begin{proof}
The mutual existence of the orbits $\{\tilde{x}_{k} = \tilde{w}_{i_k}(\tilde{x}_{k-1})\}$ and $\{x_{k} = w_{i_k}(x_{k-1})\}$ of the chain $\tilde{X}$ and $X$, respectively, is trivially provided by the strict positivity of probability functions $p_i(.)$ and probability weights $q_i$. Therefore, all we need to show is that all points of the orbits satisfy inequality \eqref{shadowing_ineq}. The proof is by induction.For $k=1$, we have
\begin{equation*}
\begin{split}
    d(x_1, \tilde{x}_1) &\leq d(x_1, w_{i_1}(\tilde{x}_0) ) + d(w_{i_1}(\tilde{x}_0), \tilde{x}_1)
    \leq d(w_{i_1}(\tilde{x}_0), \tilde{w}_{i_1}(\tilde{x}_0)) \leq \theta,
\end{split}
\end{equation*}
where the last inequality follows from the definition of a $\delta$-roundoff of a mapping (Def.~\ref{roundoff_def}). Now assume that inequality \eqref{shadowing_ineq} is true for $k$. Then
\begin{equation*}
\begin{split}
    d(x_{k+1}, \tilde{x}_{k+1}) &\leq d(x_{k+1}, w_{i_{k+1}}(\tilde{x}_k)) + d(w_{i_{k+1}}(\tilde{x}_k), \tilde{x}_{k+1})\\
    &\leq d(w_{i_{k+1}}(x_{k}), w_{i_{k+1}}(\tilde{x}_k)) + d(w_{i_{k+1}}(\tilde{x}_k), \tilde{w}_{i_{k+1}}(\tilde{x}_{k}))\\
    &\leq \lambda_{max}\; d(x_k, \tilde{x}_k) + \theta
    \leq \lambda_{max}\; (1-\lambda_{max})^{-1} \theta + \theta = \theta (1-\lambda_{max})^{-1}
\end{split}
\end{equation*}
as required.
\end{proof}
\medskip
In turn, the next theorem answers the question about geometrical resemblance between DIFS invariant sets $\mathcal{A}_k^{+} \in \mathcal{A}_{\mathcal{F}}$ and the attractor $A_{\infty}$ of a hyperbolic IFS, expressed in terms of the Hausdorff distance.  
\medskip
\begin{thmm}
Let $\{ \mathcal{D}^n(\delta); \tilde{w}_1, \dots, \tilde{w}_N; p_1,\dots, p_N\}$ be a DIFS in which $\tilde{w}_i$'s are $\delta$-roundoffs of contractions $w_i$ on $(\Real^n, d)$, and let $\{\Real^n; w_1, \dots, w_n; q_1, \dots, q_N\}$ be a corresponding hyperbolic IFS with strictly positive probabilities $q_i$. Let $\mathcal{A}_{\mathcal{F}}$ be the family of all positive recurrent classes of the Markov chain associated with the DIFS, and let $A_{\infty}$ be the attractor of the IFS. Then, independently of the values of probabilities $p_i(.)$ and $q_i$, for each $\mathcal{A}_k^{+} \in \mathcal{A}_{\mathcal{F}}$, the Hausdorff distance between $\mathcal{A}_k^{+}$ and $A_{\infty}$ is bounded from above as
\begin{equation}\label{hausdorff_distance}
    h(\mathcal{A}_k^{+}, A_{\infty}) \leq \theta(1-\lambda_{max})^{-1}.
\end{equation}
\end{thmm}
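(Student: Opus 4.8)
The plan is to compare the two Hutchinson operators directly on the metric space $(\mathcal{H}(\Real^n), h)$, rather than to argue orbit-by-orbit, because the fixed-point structure makes both halves of the Hausdorff estimate drop out simultaneously. Write $A := \mathcal{A}_k^{+}$ and let $W$ and $\widetilde{W}(\cdot) = \bigcup_{i=1}^N \tilde{w}_i(\cdot)$ denote the IFS and DIFS Hutchinson operators. By Corollary~\ref{coro_DIFS_contr} the set $A$ is finite, hence a nonempty compact subset of $\Real^n$, so $A \in \mathcal{H}(\Real^n)$ and the iterates $W^{\circ m}(A)$ are well defined.

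First I would record two ingredients. Since $A$ is a positive recurrent communication class it is in particular a closed class, so Theorem~\ref{Markov2Hutchinson}(a) gives $\widetilde{W}(A) \subset A$, while part (b) gives $\widetilde{W}(A) \supset A$; hence $A$ is a fixed point of the discrete Hutchinson operator, $\widetilde{W}(A) = A$. Secondly, because $d(\tilde{w}_i(\tilde{x}), w_i(\tilde{x})) \leq \theta$ for every $i$ and every $\tilde{x} \in \mathcal{D}^n(\delta)$ by the definition of a $\delta$-roundoff, each point of $\widetilde{W}(E)$ lies within $\theta$ of the matching point of $W(E)$ and conversely, so $h(\widetilde{W}(E), W(E)) \leq \theta$ for every $E \subset \mathcal{D}^n(\delta)$.

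With these in hand I would intertwine the two operators. Set $F_m := W^{\circ m}(A)$. Using $A = \widetilde{W}(A)$ and $F_m = W(F_{m-1})$, then splitting through the intermediate set $W(A)$, the triangle inequality for $h$ together with the contractivity of $W$ (with factor at most $\lambda_{max}$, as recalled in Section~\ref{sect_problem}) gives
\begin{equation*}
h(A, F_m) \leq h(\widetilde{W}(A), W(A)) + h(W(A), W(F_{m-1})) \leq \theta + \lambda_{max}\, h(A, F_{m-1}).
\end{equation*}
Since $h(A, F_0) = h(A, A) = 0$, unfolding the recursion yields $h(A, F_m) \leq \theta \sum_{j=0}^{m-1}\lambda_{max}^{\,j} \leq \theta(1-\lambda_{max})^{-1}$ for every $m \in \Natural$. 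Finally, by \eqref{HutConv} the iterates $F_m = W^{\circ m}(A)$ converge to $A_\infty$ in $(\mathcal{H}(\Real^n), h)$, so from $h(A, A_\infty) \leq h(A, F_m) + h(F_m, A_\infty)$ and the uniform bound above I obtain $h(\mathcal{A}_k^{+}, A_\infty) \leq \theta(1-\lambda_{max})^{-1}$, with no dependence on the probabilities $p_i(\cdot)$ or $q_i$, as claimed.

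The step I expect to carry the real weight is the fixed-point identity $\widetilde{W}(A) = A$: it is what lets me replace $A$ by $\widetilde{W}^{\circ m}(A)$ and thereby line the discrete iterates up against the convergent continuous iterates $W^{\circ m}(A)$. A more hands-on route via the shadowing Lemma~\ref{DIFS_shadowing_thm} handles one inclusion easily---covering orbits inside $\mathcal{A}_k^{+}$ are shadowed by IFS orbits that converge to $A_\infty$---but the reverse inclusion is awkward there, since shadowing an IFS orbit that approaches a prescribed point of $A_\infty$ may produce a discrete orbit that settles into a \emph{different} recurrent class; the operator comparison sidesteps this difficulty entirely.
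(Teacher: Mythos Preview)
Your proof is correct and takes a genuinely different route from the paper's. The paper argues orbit-by-orbit and treats the two inclusions separately: for $A_\infty \subset N(\mathcal{A}_k^{+},\theta(1-\lambda_{max})^{-1}+\varepsilon)$ it invokes Elton's ergodic theorem to produce, for each $a\in A_\infty$, an IFS orbit \emph{started from a point of} $\mathcal{A}_k^{+}$ that comes within $\varepsilon$ of $a$, then uses the shadowing Lemma~\ref{DIFS_shadowing_thm} together with the fact that $\mathcal{A}_k^{+}$ is a closed class to land the shadowed discrete orbit back in $\mathcal{A}_k^{+}$; for the reverse inclusion it uses recurrence of $\tilde{x}\in\mathcal{A}_k^{+}$ to find a finite return word $\mathbf{i}$ with $\tilde{w}_{\mathbf{i}}(\tilde{x})=\tilde{x}$, iterates it, and applies shadowing again. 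Your operator-level argument bypasses all of this: the identity $\widetilde{W}(\mathcal{A}_k^{+})=\mathcal{A}_k^{+}$ from Theorem~\ref{Markov2Hutchinson}, the one-step perturbation bound $h(\widetilde{W}(E),W(E))\leq\theta$, and the contractivity of $W$ on $(\mathcal{H}(\Real^n),h)$ combine into a clean collage-type recursion that handles both halves of the Hausdorff estimate at once and avoids ergodic theory entirely. The paper's approach has the advantage of making the role of shadowing explicit and dovetailing with the subsequent measure-theoretic Theorem~\ref{thm_conv2invmeasure}; yours is shorter and more elementary. One small remark on your closing paragraph: the ``different recurrent class'' obstruction you flag for the shadowing route is exactly what the paper circumvents by \emph{starting} the IFS orbit at a point of $\mathcal{A}_k^{+}$, so that the shadowed discrete orbit is trapped there by closedness---the orbitwise argument does go through, just with that extra care.
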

\begin{proof}
The Hausdorff distance between $\mathcal{A}_k^{+}$ and $A_{\infty}$ is $h(\mathcal{A}_k^{+}, A_{\infty}) = \inf\{\varepsilon \geq 0 : A_{\infty}\subset N(\mathcal{A}_k^{+}, \varepsilon) \textit{ and } \mathcal{A}_k^{+}\subset N(A_{\infty}, \varepsilon)\}$, where $N(A, \varepsilon) := \{x \in \Real^n : d(x, a) < \varepsilon,\; a \in A \}$ denotes the (open) $\varepsilon$-neighbourhood of a set $A$. First we show that for any $\varepsilon > 0$, $A_{\infty}\subset N(\mathcal{A}_k^{+}, \theta (1-\lambda_{max})^{-1}+ \varepsilon)$. We need to show that for any $a \in A_{\infty}$, there is a certain $\tilde{x} \in \mathcal{A}_k^{+}$ such that $d(a, \tilde{x}) < \theta (1-\lambda_{max})^{-1}+ \varepsilon$. Let $a$ be any point of $A_{\infty}$. The attractor is the support of the IFS invariant measure $\pi$, so for any $\varepsilon > 0$, $\pi(B(a, \varepsilon)) > 0$. Moreover, by Elton's ergodic theorem (Eq.~\eqref{birkhoof}) $\pi$ is ergodic, and hence, for any initial point $x_0 \in \Real^n$, almost every orbit $\{x_i\}_{i=0}^{\infty}$ of the Markov chain generated by the IFS visits $B(a, \varepsilon)$ infinitely often. Hence, for any $x_0 \in \Real^n$, there is a finite sequence of indices $i_m, \dots, i_1 \in \{1, \dots, N\}$ such that $d(w_{i_m}\circ \dots \circ w_1(x_0), a) < \varepsilon$. On that basis, putting $x_0 \in \mathcal{A}_k^{+}$ and using the previous lemma, we get that there is a finite sequence $\mathbf{i} = (i_m, \dots, i_1)$ of indices such that
\begin{equation*}
\begin{split}
    d(\tilde{w}_{\mathbf{i}}(x_0), a) &\leq d(\tilde{w}_{\mathbf{i}}(x_0), w_{\mathbf{i}}(x_0)) + d(w_{\mathbf{i}}(x_0), a)
    < \theta (1-\lambda_{max})^{-1} + \varepsilon,
\end{split}
\end{equation*}
where $w_{\mathbf{i}}(.) := w_{i_m}\circ \dots \circ w_1(.)$. But $\mathcal{A}_k^{+}$ is a closed class, and thus $\tilde{w}_{\mathbf{i}}(x_0) \in \mathcal{A}_k^{+}$. Hence, $A_{\infty}\subset N(\mathcal{A}_k^{+}, \theta (1-\lambda_{max})^{-1}+ \varepsilon)$ as required. 

Now we show that for any $\varepsilon > 0$, $\mathcal{A}_k^{+}\subset N(A_{\infty}, \theta (1-\lambda_{max})^{-1}+ \varepsilon)$. Let $\tilde{x}$ be any point of $\mathcal{A}_k^{+}$. Since $\mathcal{A}_k^{+}$ is a recurrent class, $\tilde{x}$ is a recurrent state and thus there is a finite sequence of indices $\mathbf{i} = (i_m, \dots, i_1) \in \{1, \dots, N\}^m$ such that $\tilde{w}_{\mathbf{i}}(\tilde{x}) = \tilde{x}$, and hence for any $j \in \Natural$, $\tilde{w}_{\mathbf{i}}^{\circ j}(\tilde{x}) = \tilde{x}$. Now let $a \in A_{\infty}$. Since the IFS maps $w_i$ are contractions, we have, for any $j\in \Natural$, 
\begin{equation*}
d(w_{\mathbf{i}}^{\circ j}(\tilde{x}), w_{\mathbf{i}}^{\circ j}(a)) \leq \lambda_{max}^{m \cdot j}\; d(\tilde{x}, a)
\end{equation*}
and hence for any $\varepsilon > 0$, there is $M\in \Natural$ such that
\begin{equation*}
    d(w_{\mathbf{i}}^{\circ M}(\tilde{x}), w_{\mathbf{i}}^{\circ M}(a)) \leq \lambda_{max}^M\; d(\tilde{x}, a) < \varepsilon,
\end{equation*}
because $\lambda_{max} \in [0, 1)$. On the basis of above, using the previous lemma we conclude that
\begin{equation*}
\begin{split}
    d(\tilde{x}, w_{\mathbf{i}}^{\circ M}(a)) &= d(\tilde{w}_{\mathbf{i}}^{\circ M}(\tilde{x}), w_{\mathbf{i}}^{\circ M}(a))\\
    &\leq d(\tilde{w}_{\mathbf{i}}^{\circ M}(\tilde{x}), w_{\mathbf{i}}^{\circ M}(\tilde{x}))+ d(w_{\mathbf{i}}^{\circ M}(\tilde{x}), w_{\mathbf{i}}^{\circ M}(\tilde{a}))\\
    &< \theta(1-\lambda_{max})^{-1} + \varepsilon.
\end{split}
\end{equation*}
But $w_{\mathbf{i}}^{\circ M}(a) \in A_{\infty}$, because $w_i$'s map $A_{\infty}$ into itself. It follows that $\mathcal{A}_k^{+}\subset N(A_{\infty}, \theta (1-\lambda_{max})^{-1}+ \varepsilon)$ as required. 

Since both $A_{\infty}\subset N(\mathcal{A}_k^{+}, \theta (1-\lambda_{max})^{-1}+ \varepsilon)$ and $\mathcal{A}_k^{+}\subset N(A_{\infty}, \theta (1-\lambda_{max})^{-1}+ \varepsilon)$ for any $\varepsilon > 0$, we get that the infimum in the Hausdorff distance $h(\mathcal{A}_k^{+}, A_{\infty})$ is bounded from above by the value of $\theta (1-\lambda_{max})^{-1}$. This completes the proof. 
\end{proof}
\medskip
\begin{corollary}
Let $\{\Real^n; w_1, \dots, w_n; p_1, \dots, p_N\}$ be a hyperbolic IFS with the attractor $A_{\infty}$. Let $\{ \mathcal{D}^n(\delta); \tilde{w}_1, \dots, \tilde{w}_N; p_1,\dots, p_N\}$ be the corresponding DIFSs (with the same constant probabilities as in the IFS), parametrized by $\delta >0$, in which $\tilde{w}_i$'s are $\delta$-roundoffs of contractions $w_i$. Let $\mathcal{A}_{\mathcal{F}}(\delta) = \{\mathcal{A}_k^{+}(\delta)\}_k$ be the family of all positive recurrent classes of the Markov chain associated with a DIFS for fixed $\delta$.  Then
\begin{equation*}
    \lim_{\delta\rightarrow 0} \mathcal{A}_k^{+}(\delta) = A_{\infty}\;\; \textit{(in the Hausdorff metric)},
\end{equation*}
where $\mathcal{A}_k^{+}(\delta)$ is any set from $\mathcal{A}_{\mathcal{F}}(\delta)$ for fixed $\delta$. \end{corollary}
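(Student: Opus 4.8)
The plan is to read the conclusion off directly from the bound \eqref{hausdorff_distance} established in the preceding theorem, the only substantive point being to track how the quantity $\theta$ depends on $\delta$ and to observe that $\lambda_{max}$ does not. First I would recall that, for a DIFS whose maps are $\delta$-roundoffs of the fixed contractions $w_i$, Corollary \ref{coro_DIFS_contr} guarantees that $\mathcal{A}_{\mathcal{F}}(\delta)$ is nonempty and finite for every $\delta > 0$; hence, for each $\delta$, any chosen class $\mathcal{A}_k^{+}(\delta)$ is a nonempty finite — thus compact — subset of $\Real^n$, so that the Hausdorff distance to the compact attractor $A_{\infty}$ is well-defined. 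The preceding theorem then supplies, for every class in $\mathcal{A}_{\mathcal{F}}(\delta)$,
\begin{equation*}
    h(\mathcal{A}_k^{+}(\delta), A_{\infty}) \leq \theta\,(1-\lambda_{max})^{-1}.
\end{equation*}

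Next I would make the $\delta$-dependence of the right-hand side explicit. Since $\lambda_{max} = \max_i \lambda_i$ is determined by the fixed contractions $w_i$ of the underlying hyperbolic IFS, it is independent of the discretization granularity, so $(1-\lambda_{max})^{-1}$ is a fixed finite constant. The factor $\theta = \diam_d(C_{\delta})/2$, by contrast, scales linearly with $\delta$: because $d$ is induced by a norm and is therefore homogeneous, any two points $x, y$ of a $\delta$-cube satisfy $d(x, y) = \delta\, d(x/\delta, y/\delta)$, whence $\diam_d(C_{\delta}) = \delta\,\diam_d(C_1)$ and $\theta = \tfrac{1}{2}\delta\,\diam_d(C_1) \to 0$ as $\delta \to 0$.

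Combining the two observations, the upper bound $\theta\,(1-\lambda_{max})^{-1}$ tends to $0$ as $\delta \to 0$, which forces $h(\mathcal{A}_k^{+}(\delta), A_{\infty}) \to 0$. Since this bound does not depend on $k$, it holds simultaneously for all positive recurrent classes, so the convergence is insensitive to which set is selected from $\mathcal{A}_{\mathcal{F}}(\delta)$ for each $\delta$ — precisely the claimed Hausdorff convergence. I do not anticipate a genuine obstacle here: the entire content is already packaged in \eqref{hausdorff_distance}, and the one thing worth stating cleanly is the elementary but essential remark that the discretization enters the estimate only through $\theta$, which vanishes linearly in $\delta$, while the intrinsic contraction datum $\lambda_{max}$ stays fixed.
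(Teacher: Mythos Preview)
Your proposal is correct and follows essentially the same route as the paper: invoke the nonemptiness and finiteness (hence compactness) of each $\mathcal{A}_k^{+}(\delta)$, apply the Hausdorff bound \eqref{hausdorff_distance}, and let $\theta \to 0$ as $\delta \to 0$ while $\lambda_{max}$ stays fixed. Your explicit remark that $\theta$ scales linearly in $\delta$ via homogeneity of the norm and that the bound is uniform in $k$ is a welcome clarification but not a departure from the paper's argument.
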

\begin{proof}
By Theorem \ref{DIFS_contr}, for any $\delta > 0$, the set of all recurrent states of the associated Markov chain is nonempty and finite (and thus compact), and so are the set's subsets $\mathcal{A}_k^{+}(\delta)$. Therefore, for any $\delta > 0$, $\mathcal{A}_k^{+}(\delta)$ is an element of $\mathcal{H}(\Real^n)$, the family of all nonempty and compact subsets of $\Real^n$. By a standard argument for hyperbolic IFSs, $A_{\infty} \in \mathcal{H}(\Real^n)$ too. Since $\diam_d(C_\delta) \rightarrow 0$ as $\delta \rightarrow 0$, and the Hausdorff distance $h$ is a metric on $\mathcal{H}(\Real^n)$, the conclusion follows from inequality \eqref{hausdorff_distance}.      
\end{proof}
\medskip
We also have a theorem concerning a relationship between DIFS and IFS measures: 
\medskip
\begin{thmm}\label{thm_conv2invmeasure}
Let $\{\Real^n; w_1, \dots, w_n; p_1, \dots, p_N\}$ be a hyperbolic IFS, and let $\pi_{\infty}$ be the IFS invariant measure. Let $\{ \mathcal{D}^n(\delta); \tilde{w}_1, \dots, \tilde{w}_N; p_1,\dots, p_N\}$ be the corresponding DIFSs (with the same constant probabilities as in the IFS), parametrized by $\delta > 0$, in which $\tilde{w}_i$'s are $\delta$-roundoffs of contractions $w_i$. Let $\mathcal{A}_{\mathcal{F}}(\delta)$ be the family of all positive recurrent classes of the Markov chain associated with a DIFS for fixed $\delta$, and $\mathit{\Pi}(\delta) = \{\pi_k(\delta) : \supp(\pi_k(\delta)) \in \mathcal{A}_{\mathcal{F}}(\delta)\}$ be the family of the chain's stationary distributions supported by the sets in $\mathcal{A}_{\mathcal{F}}(\delta)$. Then for any continuous and bounded $f : \Real^n \rightarrow \Real$, 
\begin{equation}\label{weak_conv}
    \lim_{\delta\rightarrow 0} \sum_{\tilde{x} \in \mathcal{D}^n(\delta)} f(\tilde{x}) \pi_k(\delta)\{\tilde{x}\} = \int_{\Real^n} f(x) d\pi_{\infty},
\end{equation}
where $\pi_k(\delta)$ is any stationary distribution from $\mathit{\Pi}(\delta)$ for fixed $\delta$. In other words, $\pi_k(\delta)$'s converge weakly to $\pi_{\infty}$ as $\delta \rightarrow 0$. 
\end{thmm}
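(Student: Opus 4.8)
The plan is to realize both sides of \eqref{weak_conv} as limits of averages taken along a single pair of mutually shadowing orbits driven by the \emph{same} i.i.d.\ sequence of map-indices, and then to bound the gap between the two averages by the modulus of continuity of $f$ together with the shadowing estimate $\theta(1-\lambda_{max})^{-1}$ supplied by Lemma~\ref{DIFS_shadowing_thm}. The decisive structural fact is that here the IFS and the DIFS carry the \emph{same} constant probabilities $p_i$, so the index process $\{I_j\}$ is i.i.d.\ with law $(p_1,\dots,p_N)$ in both systems; one draw of $\{I_j\}$ therefore produces an IFS orbit and a DIFS orbit from a common initial point, and this shared randomness lets the almost-sure conclusions of Elton's theorem and of Corollary~\ref{Coro_DIFS_render} be invoked on one and the same probability space.

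First I would fix $\delta>0$ and a class $\mathcal{A}_k^{+}=\mathcal{A}_k^{+}(\delta)$, choose a base point $\tilde{x}_0\in\mathcal{A}_k^{+}$, and write $\tau:=\theta(1-\lambda_{max})^{-1}$. Since $\mathcal{A}_k^{+}$ is a closed class, each $\tilde{w}_i$ maps it into itself (Theorem~\ref{Markov2Hutchinson}(a)), so for \emph{every} index sequence the DIFS orbit $\{\tilde{x}_i\}$ with $\tilde{X}_0=\tilde{x}_0$ stays in $\mathcal{A}_k^{+}$; by Corollary~\ref{Coro_DIFS_render} its empirical measures converge almost surely to $\pi_k$, whence $\frac1M\sum_{i<M} f(\tilde{x}_i)\to\sum_{\tilde{y}\in\mathcal{A}_k^{+}} f(\tilde{y})\,\pi_k\{\tilde{y}\}$ for our bounded $f$ (a finite support, so this is immediate). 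Driving the IFS from the same point $x_0=\tilde{x}_0$ with the same indices, Elton's theorem \eqref{birkhoof} gives, for almost every index sequence, $\frac1M\sum_{i<M} f(x_i)\to\int_{\Real^n} f\,d\pi_{\infty}$. Intersecting these two probability-one events, I may pick one index sequence on which both averages converge; for that sequence Lemma~\ref{DIFS_shadowing_thm} guarantees $d(x_i,\tilde{x}_i)\le\tau$ for all $i$.

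On that sequence the comparison is routine:
\[
\Big|\tfrac1M\sum_{i<M} f(\tilde{x}_i)-\tfrac1M\sum_{i<M} f(x_i)\Big|\le \tfrac1M\sum_{i<M}\big|f(\tilde{x}_i)-f(x_i)\big|\le \omega_f(\tau),
\]
where $\omega_f$ is the modulus of continuity of $f$ on a fixed compact ball $\overline{B}$ containing every orbit; letting $M\to\infty$ yields $\big|\sum_{\tilde{y}} f(\tilde{y})\pi_k\{\tilde{y}\}-\int f\,d\pi_{\infty}\big|\le\omega_f(\tau)$. To make $\overline{B}$ independent of $\delta$ I would use Theorem~\ref{DIFS_contr} and Remark~\ref{IFS_restr}: the radius $r(\delta)=\alpha r_{max}+\theta(1-\lambda_{max})^{-1}$ decreases to $\alpha r_{max}$ as $\delta\to0$, so for all small $\delta$ both $\mathcal{A}_k^{+}(\delta)$ and the forward IFS orbits lie in one fixed closed ball that the hyperbolic IFS maps into itself and on which $f$ is uniformly continuous. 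Because $\theta\to0$ as $\delta\to0$, we get $\tau\to0$, hence $\omega_f(\tau)\to0$, which is exactly \eqref{weak_conv}.

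The main obstacle is conceptual rather than computational: it is the identification of both the discrete stationary average and the continuous invariant integral as time averages over \emph{coupled} orbits, so that the single shadowing estimate of Lemma~\ref{DIFS_shadowing_thm} transfers directly to the difference of the two averages. Making this rigorous hinges on the two almost-sure ergodic statements living on the common index probability space---legitimate precisely because the probabilities are constant and identical in the two systems---and on a uniform-in-$\delta$ compact domain so that one modulus of continuity controls the error for all small $\delta$ simultaneously.
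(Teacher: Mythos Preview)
Your proposal is correct and follows essentially the same approach as the paper's own proof: couple the IFS and DIFS orbits via the common i.i.d.\ index sequence, invoke Elton's ergodic theorem on the IFS side and the positive-recurrent ergodic average on the DIFS side, then transfer the shadowing bound of Lemma~\ref{DIFS_shadowing_thm} through uniform continuity of $f$ on a compact set that is fixed for all small $\delta$. The only cosmetic difference is that the paper builds its compact container from the Hausdorff-distance estimate \eqref{hausdorff_distance} (taking $\overline{N}(A_\infty,2r_0)$), whereas you obtain it directly from the radius bound in Theorem~\ref{DIFS_contr} and Remark~\ref{IFS_restr}; both constructions serve the same purpose.
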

\begin{proof}
In \cite{Peru93} Peruggia showed that a very similar conclusion holds under the assumption that the fixed point of one of the IFS mappings coincides with $\mathbf{0}\in \mathcal{D}^2(\delta)$, the zero vector of the discrete (pixel) space, which naturally stays intact while changing the value of the discretization parameter $\delta$ (cf. \cite{Peru93}, Theorem 4.38, pp.~129--131). Although our theorem does not impose such a restriction, the proof is founded on similar arguments as those used in the proof by Peruggia.  

First, observe that the summation on the left hand side of Eq.~\eqref{weak_conv} can be restricted to the support of the measure $\pi_k(\delta)$, $\supp(\pi_k(\delta)) = \mathcal{A}_k^{+}(\delta)$, and, as we pointed out earlier, the corresponding Markov chain $\{\tilde{X}_i^{\delta}(\tilde{x}_0) : \tilde{X}_0^{\delta}(\tilde{x}_0) = \tilde{x}_0\in \mathcal{A}_k^{+}(\delta)\}$ generated by the DIFS (for fixed $\delta$) is (Birkhoff's) ergodic on $\mathcal{A}_k^{+}(\delta)$. In addition, by Elton's ergodic theorem, the IFS invariant measure $\pi_{\infty}$ is ergodic for the Markov chain $\{ X_i(x_0) : X_0 = x_0 \in \Real^n \}$ generated by the IFS on $\Real^n$. Putting these facts together, we get that for any $\delta >0$, with probability one, 
\begin{equation}\label{weak2ergodic}
\begin{split}
    \Big| \sum_{\tilde{x} \in \mathcal{A}_k^{+}(\delta)} &f(\tilde{x}) \pi_k(\delta)\{\tilde{x}\} - \int_{\Real^n} f(x) d\pi_{\infty} \Big|\\
    &= \lim_{m\rightarrow \infty} \frac{1}{m} \Big| \sum_{i=0}^{m-1} \big(f(\tilde{X}^{\delta}_i(\tilde{x}_0)) - f(X_i(\tilde{x}_0))\big)
    \Big|\\
    &\leq \lim_{m\rightarrow \infty} \frac{1}{m} \sum_{i=0}^{m-1} \big|f(\tilde{X}^{\delta}_i(\tilde{x}_0)) - f(X_i(\tilde{x}_0))\big|
\end{split}
\end{equation}
where $\tilde{x}_0 \in \mathcal{A}_k^{+}(\delta)$. But $\tilde{X}_i^{\delta}(\tilde{x}_0) = \tilde{w}_{I_i}\big(\tilde{X}_{i-1}^{\delta}(\tilde{x}_0)\big)$ and $X_i(\tilde{x}_0) = w_{I_i}\big(X_{i-1}(\tilde{x}_0)\big)$, that is, both Markov chains are driven by the same sequence $\{I_i\}_{i\in \Natural}$ of the i.i.d.~random variables $I_i$ distributed as $[p_1, \dots, p_N]$. Hence, from Lemma \ref{DIFS_shadowing_thm}, 
\begin{equation}\label{rand_bound}
d(X_i(\tilde{x}_0), \tilde{X}^{\delta}_i(\tilde{x}_0)) \leq \theta(1 - \lambda_{max})^{-1}     
\end{equation}
for every $i \in \Natural$, where $\lambda_{max}$ is the maximum contractivity factor of the IFS mappings $w_i$. Now crucial is the observation, which will be shown to be true at the end of the proof, that there exists a compact set $E \subset \Real^n$ such that, for every $\delta \in (0, R)$, where $R > 0$ is a certain real number, $E \supset \{ X_i(\tilde{x}_0)\}$ and $E \supset \{ \tilde{X}^{\delta}_i(\tilde{x}_0)\}$ for any $ \tilde{x}_0\in \mathcal{A}_k^{+}(\delta)$, that is, none of the chains for $\delta \in (0, R)$ moves out of $E$. Then, on the basis of the Heine–Cantor theorem, $f$'s are uniformly continuous on $E$. Since the right-hand side of inequality \eqref{rand_bound} converges to $0$ as $\delta \rightarrow 0$, from this we conclude that for any $\varepsilon >0$, there is $\delta(\varepsilon) \in (0, R)$ such that for any $\delta \in (0, \delta(\varepsilon))$, $\big|f(\tilde{X}^{\delta}_i(\tilde{x}_0)) - f(X_i(\tilde{x}_0))\big| < \varepsilon$ for all $i \in \Natural$. Therefore, 
\begin{equation*}
    \lim_{\delta\rightarrow 0}\Big(\lim_{m\rightarrow \infty} \frac{1}{m} \sum_{i=0}^{m-1} \big|f(\tilde{X}^{\delta}_i(\tilde{x}_0)) - f(X_i(\tilde{x}_0))\big|\Big) = 0
\end{equation*}
and hence, taking limits as $\delta \rightarrow 0$ on both sides of inequality \eqref{weak2ergodic}, we get 
\begin{equation*}
\lim_{\delta\rightarrow 0} \sum_{\tilde{x} \in \mathcal{A}_k^{+}(\delta)} f(\tilde{x}) \pi_k(\delta)\{\tilde{x}\} = \int_{\Real^n} f(x) d\pi_{\infty}. \end{equation*}
Since the sets $\mathcal{A}_k^{+}(\delta)$ are the supports of the measures $\pi_k(\delta)$, the summation on the left-hand side of the above formula equals the summation over the whole space $\mathcal{D}^n(\delta)$, and thus we have arrived at the conclusion of the theorem. 

The remaining thing to show is the existence of a compact set $E$, in which the Markov chains $\{ X_i(\tilde{x}_0)\}$ and $\{ \tilde{X}^{\delta}_i(\tilde{x}_0)\}$ reside for any $\tilde{x}_0\in \mathcal{A}_k^{+}(\delta)$ and $\delta \in (0, R)$, so as to assure the uniform continuity of the functions $f$. To this end, we can apply the following construction: Fix $R > 0$ and observe that by inequality \eqref{hausdorff_distance}, for any $\delta \in (0, R)$, $\mathcal{A}_k^{+}(\delta) \subset N(A_{\infty}, r_0)$, $r_0 = \tfrac{1}{2}\diam_d(C_R)(1-\lambda_{max})^{-1}$, and because $A_{\infty}$ is compact, so is the closure $\overline{N}(A_{\infty}, r_0)$ of the neighbourhood. Hence, for all $\delta \in (0, R)$ and $\tilde{x}_0 \in \mathcal{A}_k^{+}(\delta)$, all Markov chains $\{ \tilde{X}^{\delta}_i(\tilde{x}_0)\}$ reside in $\overline{N}(A_{\infty}, r_0)$ (because $\mathcal{A}_k^{+}(\delta)$'s are closed classes). Next we extend $\overline{N}(A_{\infty}, r_0)$ so as to additionally encompass all Markov chains $\{ X_i(\tilde{x}_0)\}$ for $\delta \in (0, R)$ and $\tilde{x}_0 \in \mathcal{A}_k^{+}(\delta)$. Due to inequality \eqref{rand_bound}, it is easily done by doubling the radius of $\overline{N}(A_{\infty}, r_0)$, so the required compact set is $\overline{N}(A_{\infty}, 2 r_0)$. This completes the proof.  
\end{proof}

\section{Open problems}\label{sec_conclude}
First of all, an open problem is the limiting behavior of the discrete version of the Hutchison operator associated with a DIFS. Note that although the operator works in an entirely deterministic way, it can be viewed as generating all possible realizations of a Markov chain started from a give state. Therefore it is possible to analyse the behavior of the operator using the theory presented in this paper. This is the topic that we are going to investigate in the near future. 

As for DIFSs treated as discretized versions of IFSs, in this paper we tackled only the problem of discretizing IFSs that are hyperbolic. An open problem still remains discretization of the IFSs contractive on average with place-dependent probabilities. It seems that the form of DIFSs with place-dependent probabilities, with which we dealt in this paper, may happen to be insufficient to embrace discretized versions of the IFSs contractive on average because of the problem concerning the place-dependent distributions within a $\delta$-cube which now need to be somehow reduced to a single representative distribution located in the cube's center. Note that there are two possible approaches to discretizing such an IFS: the first method somehow determines a single distribution on the basis of all probability distributions within a $\delta$-cube and this distribution remains unchanged during the Markov chain evolution, and the second one in which the single distribution changes depending on the current distribution ascribed to a point that the chain visits when arrives at the $\delta$-cube at a given instant. While the first approach is consistent with a time-homogeneous Markov chain and the definition of DIFS that we admit in this paper, the second one leads to a time inhomogenous Markov chain on a discrete space, that is, the chain whose transition matrix changes in time. Ergodic properties of Markov chains generated by IFSs with time-dependent probabilities on compact spaces were studied by Stenflo in \cite{Sten98}. Nonetheless, the paper dealt with the probabilities converging in time to some limiting values and, to our knowledge, this is the only paper that tackles the problem of IFSs with time-dependent probabilities. As a matter of fact, the subject of the IFSs of this kind has not been recognized in the relevant literature, probably because of its seeming exoticness. It turns out, however, that there is nothing exotic in IFSs with time-dependent probabilities because they can arise quite naturally in real-world implementations of IFSs with place-dependent probabilities. Our conjecture on this second method of discretization of an average contractive IFS with place-dependent probabilities is that the corresponding DIFS with time-and-place dependent probabilities generates a time inhomogeneous Markov chain such that, with probability one, empirical distributions of its orbit converges weakly to a probability measure (i.e., in the spirit of Eq.~\eqref{birkhoof}), which is not a stationary distribution for the process itself (supposedly the process has no stationary distribution in general), but nevertheless the measure approximates the invariant measure of the original IFS with an error not exceeding the value of $\theta/(1-c)$ with respect to the Monge-Kantorovich metric, where $c < 1$ is the on average contractivity factor of the IFS. This is our working hypothesis which we would like to investigate in the future.  



\begin{thebibliography}{99}

\bibitem{Hutch81} Hutchinson JF. Fractals and Self-Similarity. Indiana. Univ. Math. J. 30:713--749, 1981.

\bibitem{Barn93} Barnsley MF. Fractals everywhere, 2nd ed. Boston: Academic Press, 1993.

\bibitem{Peru93} Peruggia M. Discrete Iterated Function Systems. A. K. Peters, Wellesley, MA, 1993.

\bibitem{Diac99} Diaconis P and Freedman D. Iterated Random Functions, SIAM Rev., 41:45--76, 1999. 

\bibitem{Sten02} Stenflo \"O. Uniqueness of invariant measures for place-dependent random iterations of functions. IMA Vol. Math. Appl., 132:13--32, 2002. 

\bibitem{Sten12} Stenflo \"O. A survey of average contractive iterated function systems. J. Difference Equations and Applications, 18(8):1355--1380, 2012.

\bibitem{Barn88} Barnsley MF, Demko SG, Elton JH, Geronimo JS. Invariant measures for Markov
processes arising from iterated function systems with place-dependent probabilities. Ann. Inst. H.Poincar\'e Probab. Statist., 24:367--394, 1988. 

\bibitem{Barn06} Barnsley M. SuperFractals. Cambridge University Press, Cambridge, 2006. 

\bibitem{Will71} Williams RF. Composition of contractions. Bol. Sot. Brasil. Mat., 2:55--59, 1971.

\bibitem{Barn85} Barnsley MF and Demko S. Iterated function systems and the global construction of fractals. Proc. Roy. Soc. London Ser. A, 399(1817):243--275, 1985.

\bibitem{Elto87} Elton JH. An ergodic theorem for iterated maps. Ergodic Theory and Dynam. Systems, 7:481--488,
1987.

\bibitem{Good91} Goodman G. A probabilist looks at the Chaos Game. In: Peitgen H-O., Henriques JM, Penedo LF, editors. Fractals in the fundamental and applied sciences.
Amsterdam: North-Holland, 1991, pp. 159–-168.

\bibitem{Barn05} Barnsley MF, Hutchinson J, and Stenflo \"O. A fractal valued random iteration algorithm and fractal hierarchy. Fractals, 13(2):111--146, 2005.

\bibitem{Barn10} Barnsley MF. The life and survival of mathematical ideas. Notices of the American Mathematical Society, 57(1):10--22, 2010. 

\bibitem{Harr99} Harrison J. A machine-checked theory of floating point arithmetic. In: Bertot Y, Dowek G, Th\'ery L, Hirschowitz A, Paulin C (eds). Theorem Proving in Higher Order Logics. TPHOLs 1999. Lecture Notes in Computer Science, vol 1690. Springer, Berlin, Heidelberg, 1999, pp.~113--130.  

\bibitem{Pres92} Press WH, Teukolsky SA, Vetterling WT, and Flannery BP. Numerical Recipes in C (2nd Ed.). Cambridge University Press, 1992. 

\bibitem{Mull05} Muller J-M. On the definition of ulp(x). Research Report, INRIA, LIP, 2005. 

\bibitem{Serf09} Serfozo RF. Basics of Applied Stochastic Processes. Springer-Verlag, New York-Berlin, 2009. 
\bibitem{Stro05} Stroock DW. An Introduction to Markov Processes. Springer-Verlag, New York-Berlin, 2005. 

\bibitem{Fell68} Feller W. An Introduction to Probability Theory and Its Applications (3th Ed.), Volume I. John Wiley \& Sons, 1968.   

\bibitem{Meye00} Meyer C. Matrix analysis and applied linear algebra. SIAM, 2000. 

\bibitem{Walt00} Walters P. An Introduction to Ergodic Theory. Springer-Verlag, New York-Berlin, 2000.

\bibitem{Sten98} Stenflo \"O, Ergodic theorems for time-dependent random iteration of functions. In: Novak MM. Fractals and beyond. World Scientific, Singapore, 1998, pp.~129--136.

\end{thebibliography}
\end{document}